\documentclass[11pt,a4paper]{amsart}

\usepackage[margin=2.3cm]{geometry}
\usepackage{amsmath,amssymb,amsthm,mathtools,xcolor}
\numberwithin{equation}{section}
\numberwithin{figure}{section}
\usepackage{bm}
\usepackage[T1]{fontenc}
\usepackage[expansion=false]{microtype}
\usepackage{enumitem}
\usepackage{booktabs}
\usepackage{graphicx}
\usepackage[normalem]{ulem}
\usepackage[hidelinks]{hyperref}
\usepackage{aliascnt}
\usepackage[nameinlink,capitalize]{cleveref}

\makeatletter
\def\filename{\texttt{\jobname.tex}} 
\makeatother

\hypersetup{
  pdftitle={Functional Limits and Separation Times for Two Interacting ERWs},
  pdfauthor={Rafik Aguech, Shuhei Shibata, Tomoyuki Shirai}
}

\newtheorem{theorem}{Theorem}[section]

\newaliascnt{proposition}{theorem}
\newtheorem{proposition}[proposition]{Proposition}
\aliascntresetthe{proposition}

\newaliascnt{lemma}{theorem}
\newtheorem{lemma}[lemma]{Lemma}
\aliascntresetthe{lemma}

\newaliascnt{corollary}{theorem}
\newtheorem{corollary}[corollary]{Corollary}
\aliascntresetthe{corollary}

\newaliascnt{remark}{theorem}
\newtheorem{remark}[remark]{Remark}
\aliascntresetthe{remark}

\newaliascnt{definition}{theorem}

\aliascntresetthe{definition}

\crefname{theorem}{theorem}{theorems}
\Crefname{theorem}{Theorem}{Theorems}

\crefname{proposition}{proposition}{propositions}
\Crefname{proposition}{Proposition}{Propositions}

\crefname{lemma}{lemma}{lemmas}
\Crefname{lemma}{Lemma}{Lemmas}

\crefname{corollary}{corollary}{corollaries}
\Crefname{corollary}{Corollary}{Corollaries}

\crefname{remark}{remark}{remarks}
\Crefname{remark}{Remark}{Remarks}

\crefname{definition}{definition}{definitions}
\Crefname{definition}{Definition}{Definitions}

\newcommand{\R}{\mathbb{R}}

\newcommand{\Z}{\mathbb{Z}}
\newcommand{\N}{\mathbb{N}}
\newcommand{\E}{\mathbb{E}}
\newcommand{\Prob}{\mathbb{P}}
\newcommand{\F}{\mathcal{F}}
\newcommand{\tr}{\operatorname{tr}}
\newcommand{\HS}{\mathrm{HS}}
\DeclareMathOperator{\diag}{diag}
\def\dd{\mathbf{e}}

\title[Functional Limits and Separation Times for Two Interacting ERWs]
{Functional Limits and Separation Times for Two Interacting Elephant Random Walks}

\author{Rafik Aguech}
\address{Department of Statistics and Operations Research, College of Science,
King Saud University, Riyadh, Saudi Arabia}
\email{raguech@ksu.edu.sa}

\author{Shuhei Shibata}
\address{Joint Graduate School of Mathematics for Innovation, Kyushu University, Fukuoka, Japan}
\email{shibata.shuhei.746@s.kyushu-u.ac.jp}

\author{Tomoyuki Shirai}
\address{Institute of Mathematics for Industry, Kyushu University, Fukuoka, Japan}
\email{shirai@imi.kyushu-u.ac.jp}

\subjclass[2020]{60F17, 60G15, 60G18, 60G50, 60K35}
\keywords{Elephant random walk; interacting random walks; functional central limit theorem;
noise-reinforced Brownian motion; Gaussian processes; difference process; exit times}

\begin{document}

\begin{abstract}
We establish functional scaling limits and study first separation times for the interacting two
elephant model studied by Aguech and Qin.
In the joint diffusive regime, we give a direct martingale proof of convergence
to a two-dimensional continuous Gaussian process represented by a matrix-kernel
analogue of the noise-reinforced Brownian motion.
We then investigate the difference process, whose diffusive scaling persists
in the symmetric case even when the joint walk is critical or superdiffusive.
For the first separation time of the two walks, we establish convergence
in distribution to the first exit time of the limiting Gaussian process, together with convergence of all positive moments under diffusive
scaling.
We also obtain monotonicity results for the limiting exit time by
combining explicit covariance identities with Anderson's inequality.
\end{abstract}

\maketitle

\begin{NoHyper}
\begingroup
\renewcommand{\thefootnote}{}
\endgroup
\end{NoHyper}

\tableofcontents

\section{Introduction}
\label{sec:intro}

The Elephant Random Walk (ERW), introduced by Sch\"utz and Trimper~\cite{schutz2004elephants},
is a discrete-time random walk on $\Z$ with long-range memory, whose increments depend on the whole past trajectory.
At each step, a previous time is sampled uniformly and the corresponding increment is repeated with probability $p$ or reversed with probability $1-p$.
Writing $\alpha=2p-1\in[-1,1]$ for the reinforcement parameter, the model exhibits three different asymptotic regimes:
the diffusive regime $p<3/4$ (equivalently $\alpha<1/2$),
the critical regime $p=3/4$ (equivalently $\alpha=1/2$),
and the superdiffusive regime $p>3/4$ (equivalently $\alpha>1/2$).
Its asymptotic behaviour has been extensively investigated from various perspectives, see \cite{bercu2018martingale, bercu2019multidimensional, coletti2017strong, coletti2017central, guerin2025limit, guerin2026fixed, kubota2019gaussian}.

In the reinforcing part of the diffusive regime, the rescaled position converges to the continuous Gaussian process
\[
Z_t=\int_0^t\left(t/s\right)^\alpha\,dB_s,
\qquad
0<\alpha<\frac12,
\]
known as the \emph{noise-reinforced Brownian motion}.
This process was studied systematically by Bertoin~\cite{bertoin2020universality} and arises as a universal Gaussian scaling limit for a broad class of reinforced stochastic processes. Related integral representations in the negatively reinforced setting appear
as \emph{noise-counterbalanced Brownian motions} in
Bertenghi and Rosales-Ortiz~\cite{bertenghi2022joint}.
Functional limit theorems in the Skorokhod space have been established for the one-dimensional ERW by Baur--Bertoin~\cite{baur2016elephant}, for the multi-dimensional ERW by Bertenghi~\cite{bertenghi2022functional}, and for elephant walks on periodic structures by Shibata~\cite{shibata2025functional}. 

Recently, the study of interacting reinforced random walks has also attracted considerable attention, see~\cite{chen2014two, erhard2024stochastic, aletti2017synchronization, prado2023two, rosales2022vertex, gantert2024interacting, prado2025interacting, das2024elephant, marquioni2019multidimensional, aguech2026two}.
In the context of elephant random walks,
Marquioni~\cite{marquioni2019multidimensional} introduced an interacting two-elephant model and investigated mainly the first- and second-order moments of the joint process.
More recently, Das~\cite{das2024elephant} studied a more general system of multiple interacting elephant random walks, allowing the memory interactions among the walks to be specified through a graph. 
Aguech and Qin~\cite{aguech2026two} studied an interacting two-elephant model in which each elephant samples a uniformly chosen increment from the history of the other elephant.
Writing $\alpha_i=2p_i-1$ for the reinforcement parameter of the $i$-th elephant, $i=1,2$, they established, among other results, a fixed-time bivariate central limit theorem in the diffusive regime $\alpha_1\alpha_2<1/4$.
Our first aim is to strengthen this fixed-time convergence to a functional limit theorem and to investigate the probabilistic structure of the limiting Gaussian process.

More precisely, let
\[
  S_n=\bigl(S_n^{(1)},S_n^{(2)}\bigr)^\top,\qquad A=\begin{pmatrix}0&\alpha_1\\\alpha_2&0\end{pmatrix}.
\]
We prove that, whenever $\alpha_1\alpha_2<1/4$,
\[
\left\{
\frac{S_{[nt]}}{\sqrt n}
\right\}_{t\ge0}
\Longrightarrow
\{Z_t\}_{t\ge0}
\qquad
\text{as $n\to\infty$},
\]
where
\[
Z_t
=
\int_0^t
(t/s)^A\,dB_s.
\]
Here and throughout, $\Longrightarrow$ denotes convergence in distribution, and $x^A$ denotes $e^{(\log x)A}$ for $x>0$.
Thus the joint scaling limit is a natural matrix-kernel analogue of the noise-reinforced Brownian
motion. We derive explicit representations and covariance identities for this process and describe its
stochastic integral equation and Lamperti transform. The proof is based on a direct martingale
argument adapted to the two-elephant structure.

Our second aim is to study the difference process
\[
  \mathcal D_n:=S_n^{(1)}-S_n^{(2)}.
\]
We establish a functional limit theorem and show
that, in the symmetric case $\alpha_1=\alpha_2=\alpha$, diffusive scaling
persists throughout $-1/2<\alpha<1$, even when the joint walk is critical or superdiffusive. We derive the covariance of the limiting Gaussian difference, and we prove a global and a local law of the iterated logarithm for that process.

This functional limit naturally leads to a separation-time problem. For the first separation time
\[
  \sigma_c:=\inf\{n\ge1:|\mathcal D_n|\ge2c\},
\]
we show that
\[
  \frac{\sigma_c}{c^2}
  \Longrightarrow
  \tau_{\alpha_1,\alpha_2}
  :=
  \inf\{t>0:|V_t|\ge2\}
  \qquad
\text{as $c\to\infty$},
\]
where $V$ is the limiting Gaussian process. Moreover, we obtain, for every $q>0$,
\[
  \E[\sigma_c^q]
  =
  c^{2q}\E[\tau_{\alpha_1,\alpha_2}^q]
  +o(c^{2q}).
\]
We further establish monotonicity properties of the limiting exit times
with respect to the usual stochastic order, both along product level sets
$\alpha_1\alpha_2$ and along the equal-parameter diagonal
$\alpha_1=\alpha_2$.

Stopping times associated with a single ERW have also been investigated recently. Andr\'e and Zuazn\'abar~\cite{andre2026estimates} studied the
escape time of a single elephant random walk from a symmetric interval,
obtaining two-sided exponential tail bounds in the diffusive regime and
quadratic asymptotics for its expectation.
Ram\'irez-Gonz\'alez and Machado~\cite{ramirez2026frog} investigated
related escape-time asymptotics across the diffusive, critical, and
superdiffusive regimes in connection with a frog model with elephant
random walk particle motions.
Related questions concerning cover times and ranges were studied by
Qin~\cite{qin2026cover}.

The present model is contained in the graph-based framework of
Das~\cite{das2024elephant}: functional scaling limits follow from the
strong Gaussian approximation for general interaction matrices in
\cite[Theorem~13]{das2024elephant}, while diffusive projections in the
superdiffusive regime and the symmetric two-elephant case are treated in
\cite[Section~3.1.2 and Corollary~15]{das2024elephant}, respectively.
Our direct martingale approach gives these functional limits in the present
setting and further allows us to study the first separation time, including
its distributional and moment asymptotics and stochastic monotonicity
properties of the limiting exit time.
We also note that, in a different setting, Roy, Takei and
Tanemura~\cite{roy2024often} considered a pair of \emph{independent}
elephant random walks with a common memory parameter and established a
sharp criterion for whether the two walkers collide infinitely or only
finitely many times. Subsequently, Shibata and
Shirai~\cite{shibata2025remark} extended these results to the case of
distinct memory parameters.

We do not pursue joint functional limit theorems in the critical and superdiffusive regimes in this paper.
The corresponding fixed-time asymptotics are discussed by Aguech and Qin~\cite{aguech2026two}.

The outline of the paper is as follows.
In \Cref{sec:setup}, we introduce the interacting ERW model and collect
some preliminary facts on the reinforcement matrix.
In \Cref{sec:main}, we state the joint functional limit theorem and study
the resulting Gaussian process.
In \Cref{sec:proof}, we prove the functional limit theorem by a martingale
approach.
In \Cref{sec:difference}, we establish the functional limit theorem for the
difference process and analyze its limiting Gaussian process.
Finally, in \Cref{sec:exit}, we investigate the first separation time
of the two walks, including moment asymptotics and monotonicity properties
of the limiting exit time.

\section{Setup}
\label{sec:setup}

Throughout, let $i\in\{1,2\}$.
Let $\{\xi_n^{(i)}\}_{n\ge2}$ be a sequence of i.i.d.\ Rademacher random variables with
parameter $p_i\in[0,1]$, that is
\[
  \Prob(\xi_n^{(i)}=1)=1-\Prob(\xi_n^{(i)}=-1)=p_i.
\]
Let $\{U_n^{(i)}\}_{n\ge1}$ be a sequence of independent random variables such that, for each
$n\ge1$, $U_n^{(i)}$ is uniformly distributed on $\{1,2,\dots,n\}$.
Assume that all random variables
$\{\xi_n^{(i)}:n\ge2,\,i=1,2\}$ and $\{U_n^{(i)}:n\ge1,\,i=1,2\}$ are mutually independent.

We define two interacting elephant random walks $\{S_n^{(i)}\}_{n\ge0}$ on $\Z$, following the model studied by Aguech and Qin~\cite{aguech2026two}.
Let $(X_1^{(1)},X_1^{(2)})$ be an arbitrary
$\{-1,1\}^2$-valued random vector, independent of $\{\xi_n^{(i)}:n\ge2,\ i=1,2\}$ and $\{U_n^{(i)}:n\ge1,\ i=1,2\}$. (The choice of the initial distribution does not affect the limiting laws or the leading‑-order moment asymptotics below.)
For each $n\ge1$, given the past steps $\{X_k^{(1)}\}_{k=1}^n$ and $\{X_k^{(2)}\}_{k=1}^n$,
the $(n+1)$-th steps are defined by
\begin{equation}
\label{eq:step-interaction}
  X_{n+1}^{(1)}=\xi_{n+1}^{(1)}\,X_{U_n^{(2)}}^{(2)},
  \qquad
  X_{n+1}^{(2)}=\xi_{n+1}^{(2)}\,X_{U_n^{(1)}}^{(1)}.
\end{equation}
Set
\[
  \F_n=\sigma\bigl(X_1^{(1)},\dots,X_n^{(1)},\,X_1^{(2)},\dots,X_n^{(2)}\bigr),
  \qquad 
  \mathcal F_0:=\{\emptyset,\Omega\}.
\]
Then, for every $n\in\N$, the random variables
$\xi_{n+1}^{(1)}$, $\xi_{n+1}^{(2)}$, $U_n^{(1)}$, and $U_n^{(2)}$ are independent of $\F_n$.
The interacting elephant random walks are then defined by
\[
  S_0^{(i)}=0
  \qquad\text{and}\qquad
  S_n^{(i)}=\sum_{k=1}^n X_k^{(i)},\qquad n\ge1.
\]
We write
\begin{equation}
\label{eq:ERWs-vector}
  S_n:=\begin{pmatrix}S_n^{(1)}\\ S_n^{(2)}\end{pmatrix}.
\end{equation}

The parameters $p_i$ are the memory parameters of the first and second elephants.
We introduce the reinforcement matrix and parameters
\[
A:=\begin{pmatrix}0&\alpha_1\\\alpha_2&0\end{pmatrix},\qquad\alpha_i=2p_i-1\in[-1,1].
\]
Set 
\[
\mu:=\alpha_1\alpha_2\in [-1,1].
\]
The power-series expansion of the matrix exponential gives
\begin{equation}
    \label{eq:e-matrix-function}
    e^{xA}=c_\mu(x)I+s_\mu(x)A,
\end{equation}
where
\begin{equation}
    \label{function:convenient_e-matrix}
    c_\mu(x):=
\begin{cases}
\cosh(\sqrt{\mu}\,x), & \mu>0,\\
1, & \mu=0,\\
\cos(\sqrt{-\mu}\,x), & \mu<0,
\end{cases}
\qquad 
s_\mu(x):=
\begin{cases}
\dfrac{\sinh(\sqrt{\mu}\,x)}{\sqrt{\mu}}, & \mu>0,\\[2mm]
x, & \mu=0,\\[2mm]
\dfrac{\sin(\sqrt{-\mu}\,x)}{\sqrt{-\mu}}, & \mu<0.
\end{cases}
\end{equation}

\section{Joint functional limit}
\label{sec:main}

Throughout the remainder of the paper, we assume that
\[
\mu:=\alpha_1\alpha_2<\frac14,
\]
unless explicitly stated otherwise. 

Let $D([0,\infty),\R^2)$ denote the Skorokhod space of c\`adl\`ag $\R^2$-valued functions
on $[0,\infty)$, equipped with the Skorokhod $J_1$-topology.
The following theorem identifies the continuous $\R^2$-valued Gaussian process arising as
the scaling limit of $\{S_n\}_{n\ge0}$.

\begin{theorem}
\label{thm:main}
We have the distributional convergence in $D([0,\infty),\R^2)$,
\begin{equation}
\label{eq:main-convergence}
  \left\{\frac{S_{[nt]}}{\sqrt{n}}\right\}_{t\ge0}
  \ \Longrightarrow\
  \{Z_t\}_{t\ge0}
  \qquad\text{as }n\to\infty,
\end{equation}
where $\{Z_t\}_{t\ge0}$ is a centered continuous $\R^2$-valued Gaussian process with $Z_0=0$,
represented by
\begin{equation}
\label{eq:gaussian-conti}
  Z_t
  =\int_0^t (t/s)^A\,dB_s,
  \qquad t>0,
\end{equation}
for a two-dimensional standard Brownian motion $\{B_t\}_{t\ge0}$.
\end{theorem}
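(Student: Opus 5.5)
We follow the standard martingale route for ERW functional limits (as in Baur--Bertoin and Bercu), adapted to the matrix structure.

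\textbf{Martingale decomposition.} By \eqref{eq:step-interaction} and the independence of $\xi_{n+1}^{(i)},U_n^{(i)}$ from $\F_n$,
\[
\E[X_{n+1}\mid\F_n]=\frac{1}{n}AS_n,
\qquad\text{hence}\qquad
\E[S_{n+1}\mid\F_n]=\Bigl(I+\frac{A}{n}\Bigr)S_n .
\]
Set $a_1=I$ and $a_n=\prod_{k=1}^{n-1}(I+A/k)$. These factors commute, since all are polynomials in $A$, and each is invertible apart from finitely many exceptional $\alpha_i$; for those we start the product at a larger index. Then $M_n:=a_n^{-1}S_n$ is an $\R^2$-valued martingale with increments $\Delta M_{k}=a_{k}^{-1}\varepsilon_k$, where $\varepsilon_k=X_k-\E[X_k\mid\F_{k-1}]$. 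Using \eqref{eq:e-matrix-function} on the eigenbasis of $A$, with eigenvalues $\pm\sqrt{\mu}$, we get $a_n\sim C\,n^{A}$ for an invertible matrix $C$ commuting with $A$. Since $\mu<1/4$, the eigenvalues of $A$ have real part below $1/2$. The conditional covariance is
\[
\E[\varepsilon_{k+1}\varepsilon_{k+1}^\top\mid\F_k]=I-\frac{1}{k^2}AS_kS_k^\top A^\top+\frac{1}{k}\bigl(\text{off-diagonal terms in }\E[X^{(1)}_{k+1}X^{(2)}_{k+1}\mid\F_k]\bigr).
\]
The off-diagonal entry is $\alpha_1\alpha_2\,k^{-2}S_k^{(2)}S_k^{(1)}$, because the two samplings are independent. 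An a priori second-moment bound gives $\E|S_k|^2=O(k)$ in the diffusive regime. This bound follows from the same recursion applied to $\E[S_kS_k^\top]$, using $\mathrm{Re}\,\mathrm{spec}(A)<1/2$. It yields $k^{-2}S_kS_k^\top\to0$ a.s.\ and in $L^1$, via the martingale strong law or a direct variance estimate. So the conditional covariance is $I+o(1)$.

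\textbf{Functional CLT for the martingale.} Consider $N^{(n)}_t:=n^{-1/2}\,n^{A}M_{[nt]}$, or more conveniently the time-changed family. Its predictable quadratic variation is
\[
\frac1n\sum_{k\le nt}n^{A}a_k^{-1}(I+o(1))a_k^{-\top}n^{A^\top}
\to\int_0^t s^{-A}s^{-A^\top}\,ds .
\]
This integral is finite because $\mathrm{Re}\,\mathrm{spec}(A)<1/2$. The Lindeberg condition is trivial since the increments are bounded by $O(|a_k^{-1}|)$. The martingale FCLT (Jacod--Shiryaev / Helland) then gives convergence in $D$ of $n^{-1/2}n^{A}M_{[nt]}$ to $W_t:=\int_0^t s^{-A}dB_s$. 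This limit is a continuous Gaussian martingale with the stated bracket, where $B$ is a standard 2D Brownian motion.

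\textbf{From the martingale to $S$.} We write
\[
\frac{S_{[nt]}}{\sqrt n}=\bigl(a_{[nt]}n^{-A}\bigr)\,\frac{n^{A}M_{[nt]}}{\sqrt n}.
\]
The prefactor $a_{[nt]}n^{-A}=a_{[nt]}[nt]^{-A}\,t^{A}$ converges to $C t^{A}$ uniformly on compacts of $(0,\infty)$. Since $C$ commutes with $A$, it can be absorbed by redefining $a_n$ normalized so that $a_n n^{-A}\to I$. The continuous mapping theorem then yields convergence on $[\delta,T]$ to $t^{A}W_t=\int_0^t(t/s)^A dB_s=Z_t$. The main obstacle is the neighbourhood of $t=0$, where $t^{A}$ may blow up (when $\mu>0$, $\|t^A\|\sim t^{-\sqrt\mu}$). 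We handle it by a tightness estimate:
\[
\E\sup_{k\le n\delta}|S_k|^2\le C\delta n\quad\text{uniformly in }n.
\]
This follows from Doob's inequality applied to $M$ on dyadic blocks,
\[
\E\sup_{2^j\le k<2^{j+1}}|S_k|^2\le C\|a_{2^{j+1}}\|^2\sum_{m\le2^{j+1}}\|a_m^{-1}\|^2\le C2^{j},
\]
where the last bound uses $\mathrm{Re}\,\mathrm{spec}(A)<1/2$. Summing over $2^j\le n\delta$ gives the claim. Together with $\E|Z_t|^2=O(t)$ and the continuity of $Z$ at $0$, this lets us pass from $[\delta,T]$ to $[0,T]$ by a standard approximation argument (Billingsley Thm.~3.2). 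Finally, Gaussianity, continuity and the representation \eqref{eq:gaussian-conti} come directly from the Wiener integral form of $W$.
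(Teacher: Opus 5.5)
Your route is genuinely different from the paper's. You use the multiplicatively normalized martingale $a_n^{-1}S_n$ and an FCLT for $n^{-1/2}n^{A}a_{[nt]}^{-1}S_{[nt]}$, whose limit is $W_t=\int_0^t s^{-A}\,dB_s$. On $[\delta,T]$ this gives the full $Z_t=t^AW_t$ directly, with no truncation of the integral. The paper instead uses the additive martingale $\sum_{k<n}D_{k+1}$, whose bracket tends to $tI$, together with the summation-by-parts maps $G_n^{(\varepsilon)}$.

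Your FCLT step is fine provided you keep $n^Aa_k^{-1}\approx (n/k)^A$ together, i.e.\ use $\|n^Aa_k^{-1}\|\le C_T(n/k)^\theta$ for $k\le nT$ with some $\theta<1/2$. That gives both the bracket limit and jumps of size $O(n^{\theta-1/2})$.

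\textbf{The gap is the near-origin maximal estimate.} The step $\|a_{2^{j+1}}\|^2\sum_{m\le 2^{j+1}}\|a_m^{-1}\|^2\le C2^j$ relies on the scalar picture $\|a_m^{-1}\|\approx m^{-\rho}$, and that picture is wrong here. $A$ has eigenvalues $\pm\sqrt\mu$ of both signs, so for $0<\mu<1/4$ both $a_m\approx Cm^A$ and $a_m^{-1}\approx C^{-1}m^{-A}$ have norm $\asymp m^{\sqrt\mu}$. Your left side is then $\asymp 2^{j(1+4\sqrt\mu)}$. Summing over blocks, what you actually obtain is only $n^{-1}\E\sup_{k\le n\delta}|S_k|^2\lesssim n^{4\sqrt\mu}\delta^{1+4\sqrt\mu}$, which diverges as $n\to\infty$. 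For $\mu=0\ne A$ one has $a_m^{\pm1}=I\pm H_{m-1}A$ with $H_{m-1}=\sum_{i<m}1/i$, and the bound is off by a factor $j^4$. Separating $\|a_k\|$ from $\|a_m^{-1}\|$ discards exactly the cancellation in $a_ka_m^{-1}=\Phi(k,m)$ that the matrix case needs.

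\textbf{The claimed conclusion is true and can be repaired.} The estimate $\E\sup_{k\le n\delta}|S_k|^2\le Cn\delta$ holds; the fix is to rebase the martingale on each dyadic block, which is what the paper's \Cref{lem:fundamental-maximal} does.
\begin{itemize}
\item For $2^j\le k\le 2^{j+1}$ write $S_k=\Phi(k,2^j)\,\bigl(a_{2^j}a_k^{-1}S_k\bigr)$. The process $k\mapsto a_{2^j}a_k^{-1}S_k$ is a martingale.
\item $\sup_j\sup_{2^j\le k\le 2^{j+1}}\bigl(\|\Phi(k,2^j)\|+\|\Phi(k,2^j)^{-1}\|\bigr)<\infty$. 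This holds because $\|(I+A/i)^{\pm1}\|\le 1+C/i$ and the harmonic sum over a dyadic block is bounded.
\item Doob's inequality then gives $\E\sup_{2^j\le k\le 2^{j+1}}|S_k|^2\le C\,\E\bigl|\Phi(2^{j+1},2^j)^{-1}S_{2^{j+1}}\bigr|^2\le C'\E|S_{2^{j+1}}|^2\le C''2^j$, using your pointwise bound $\E|S_k|^2=O(k)$.
\item Summing over blocks gives the estimate, and your converging-together step on $[0,T]$ then goes through.
\end{itemize}

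Two minor points. First, $a_nn^{-A}\to C$ should be derived from $\log(I+A/k)=A/k+O(k^{-2})$ rather than from an eigenbasis, since $A$ is nilpotent when $\mu=0\ne A$. Second, no exceptional parameters need special treatment, because $\det(I+A/k)=1-\mu/k^2>0$ for every $k\ge1$ when $\mu<1/4$.
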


\begin{remark}
\label{rem:explicit-Z} 
(i) In the one-dimensional setting, for $0<\alpha<1/2$, the process $Z_t=\int_0^t(t/s)^\alpha\,dB_s$ is the noise-reinforced Brownian motion, and it is the scaling limit of the
one-dimensional ERW in the reinforcing part of the diffusive regime (cf.\ Baur--Bertoin~\cite{baur2016elephant}, Bertoin~\cite{bertoin2020universality}).

(ii) \Cref{thm:main} is a functional extension of the distributional limit theorem of
Aguech and Qin~\cite{aguech2026two}.

(iii)
Suppose that $\mu=\alpha_1\alpha_2>0$. Set
\[
  P_\gamma:=
  \frac1{\sqrt2}
  \begin{pmatrix}
    \gamma&1\\
    1&-\gamma^{-1}
  \end{pmatrix},
  \qquad
  \gamma:=\sqrt{\frac{\alpha_1}{\alpha_2}},
  \qquad
  \widetilde Z:=P_\gamma^{-1}Z,
  \qquad
  \widetilde B:=P_\gamma^{-1}B.
\]
Writing $\varepsilon:=\operatorname{sgn}(\alpha_1)
  =\operatorname{sgn}(\alpha_2)$, we have
\[
  \widetilde Z_t^{(1)}
  =
  \int_0^t
  \left(\frac ts\right)^{\varepsilon\sqrt{\mu}}
  \,d\widetilde B_s^{(1)},
  \qquad
  \widetilde Z_t^{(2)}
  =
  \int_0^t
  \left(\frac ts\right)^{-\varepsilon\sqrt{\mu}}
  \,d\widetilde B_s^{(2)}.
\]
Moreover, since $P_\gamma$ is symmetric,
$
  d\langle\widetilde B\rangle_t=P_\gamma^{-2}\,dt.
$
Thus, $\widetilde B^{(i)}$ is a Brownian motion with variance
$(P_\gamma^{-2})_{ii}t$, $i=1,2$, and the corresponding stochastic
integral carries this variance factor. In general,
$\widetilde B^{(1)}$ and $\widetilde B^{(2)}$ are not independent.

In particular, if $\alpha_1=\alpha_2=\alpha>0$, then, with
$W:=P_1^\top B$,
the processes $W^{(1)}$ and $W^{(2)}$ are independent standard Brownian
motions, and
\[
  \frac{Z_t^{(1)}+Z_t^{(2)}}{\sqrt2}
  =
  \int_0^t
  \left(\frac ts\right)^\alpha\,dW_s^{(1)},
  \qquad
  \frac{Z_t^{(1)}-Z_t^{(2)}}{\sqrt2}
  =
  \int_0^t
  \left(\frac ts\right)^{-\alpha}\,dW_s^{(2)}.
\]
Thus, in the symmetric case, the sum and difference components are
independent one-dimensional Gaussian processes. In particular, the
difference eliminates the growing mode with parameter $\alpha$, leaving
the mode with $-\alpha$. This suggests that the difference may remain
diffusive beyond the joint diffusive regime. \Cref{thm:diff-fclt}
proves this directly for $1/2\le\alpha<1$, where the above
two-dimensional Wiener integral is no longer defined.
\end{remark}

\begin{proposition}
\label{prop:covariance}
The Wiener integral in~\eqref{eq:gaussian-conti} is well-defined for every $t>0$. Furthermore, the covariance matrix of $\{Z_t\}_{t\ge0}$ is given by
\begin{equation}
\label{eq:cov-ZsZt}
  \E\bigl[Z_s Z_t^\top\bigr]
  =s\,\Sigma\,(t/s)^{A^\top},
  \qquad 0<s\le t,
\end{equation}
where
\begin{equation}
\label{eq:Sigma}
  \Sigma:=\E[Z_1 Z_1^\top]
  =\frac{1}{1-4\alpha_1\alpha_2}
  \begin{pmatrix}
    1+2\alpha_1^2-2\alpha_1\alpha_2
    &\alpha_1+\alpha_2\\[1mm]
    \alpha_1+\alpha_2
    &1+2\alpha_2^2-2\alpha_1\alpha_2
  \end{pmatrix}.
\end{equation}
\end{proposition}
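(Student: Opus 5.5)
The plan is to prove both claims with the Itô isometry, using the explicit form \eqref{eq:e-matrix-function} of the matrix power. The covariance $\Sigma$ will then be identified through a Lyapunov equation, which avoids computing hyperbolic or trigonometric integrals entry by entry.

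\emph{Well-definedness.} Since $(t/s)^A=c_\mu(\log(t/s))I+s_\mu(\log(t/s))A$, the Wiener integral exists if and only if
\[
\int_0^t\|(t/s)^A\|_{\HS}^2\,ds<\infty .
\]
Substituting $s=te^{-r}$ turns this into $t\int_0^\infty e^{-r}\bigl(c_\mu(r)^2\,\tr I+2c_\mu(r)s_\mu(r)\tr A+s_\mu(r)^2\tr(AA^\top)\bigr)\,dr$. We treat the three cases separately.
\begin{itemize}[leftmargin=*]
\item For $\mu<0$, the functions $c_\mu$ and $s_\mu$ are bounded.
\item For $\mu=0$, they grow at most linearly.
\item For $0<\mu<1/4$, they are $O(e^{\sqrt\mu\,r})$, so the integrand is $O(e^{(2\sqrt\mu-1)r})$.
\end{itemize}
In all three cases the integral is finite. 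This is exactly where the standing assumption $\mu<1/4$ is used.

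\emph{Covariance structure.} For $0<u\le s\le t$ the matrices $(t/u)^A=(t/s)^A(s/u)^A$ commute, since both are functions of $A$. By the Itô isometry,
\[
\E[Z_sZ_t^\top]=\int_0^s (s/u)^A\bigl((s/u)^A\bigr)^\top du\,(t/s)^{A^\top}.
\]
The contribution of $(s,t]$ vanishes because those increments of $B$ are independent of $Z_s$. Substituting $u=sv$ gives $s\,\Sigma\,(t/s)^{A^\top}$ with
\[
\Sigma=\int_0^1 v^{-A}v^{-A^\top}dv=\int_0^\infty F(r)\,dr,\qquad F(r):=e^{-r}e^{rA}e^{rA^\top}.
\]
Taking $s=t=1$ shows that $\Sigma=\E[Z_1Z_1^\top]$, which proves \eqref{eq:cov-ZsZt}.

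\emph{Computing $\Sigma$.} Next we use
\[
F'(r)=(A-\tfrac12 I)F(r)+F(r)(A^\top-\tfrac12 I).
\]
Integrating over $[0,\infty)$, with $F(0)=I$ and $F(r)\to0$ by the exponential decay above, gives the Lyapunov equation
\[
A\Sigma+\Sigma A^\top-\Sigma=-I .
\]
Write $\Sigma=\begin{pmatrix}a&b\\ b&d\end{pmatrix}$ and compare entries:
\[
2\alpha_1b-a=-1,\qquad 2\alpha_2 b-d=-1,\qquad \alpha_1 d+\alpha_2 a-b=0.
\]
Substituting the first two equations into the third yields $b=\alpha_1+\alpha_2+4\mu b$. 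Hence $b=(\alpha_1+\alpha_2)/(1-4\mu)$, and then $a=1+2\alpha_1 b$ and $d=1+2\alpha_2 b$ give \eqref{eq:Sigma}.

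The only delicate step is the boundary term at infinity, i.e.\ justifying $F(r)\to0$ and the exchange of integration. This follows from the same growth bound as in the well-definedness step, so the main obstacle is simply keeping the case analysis in $\mu$ careful, particularly when $\mu\le0$, where polynomial factors in $r$ appear.
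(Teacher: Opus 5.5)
Your proposal is correct and follows essentially the same route as the paper: the It\^o isometry with the semigroup property, the substitution $v=e^{-r}$, and differentiating $e^{-r}e^{rA}e^{rA^\top}$ to obtain the Lyapunov equation $\Sigma-A\Sigma-\Sigma A^\top=I$. The only differences are that you control the growth of $x^A$ by a case analysis on the sign of $\mu$, where the paper uses a single bound $\|x^A\|\le Cx^\theta$ with $\theta<1/2$, and that you carry out the entrywise solve of the Lyapunov equation, which the paper leaves implicit.
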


\begin{proof}
Choose $\theta\in(0,1/2)$ with
$\theta>\sqrt{\mu}$ when $\mu>0$. It follows from \eqref{eq:e-matrix-function} that,
for some $C>0$,
\begin{equation}
\label{eq:matrix-exp-growth}
  \|x^A\|\le Cx^\theta,\qquad x\ge1.
\end{equation}
Hence, using $\|M\|_{\HS}\le\sqrt2\|M\|$,
\[
  \int_0^t\|(t/s)^A\|_{\HS}^2\,ds
  \le 2C^2\int_0^t(t/s)^{2\theta}\,ds
  =\frac{2C^2t}{1-2\theta}<\infty.
\]
Thus the Wiener integral defining $Z_t$ is well-defined.

For $0<s\le t$, the It\^o isometry and the semigroup property give
\begin{align*}
  \E[Z_sZ_t^\top]
  =\left(\int_0^s (s/u)^A(s/u)^{A^\top}\,du\right)
    (t/s)^{A^\top}
  =s\,\Sigma\,(t/s)^{A^\top},
\end{align*}
where
$
  \Sigma=\int_0^1 (1/v)^A(1/v)^{A^\top}\,dv.
$
It remains to compute $\Sigma$. With $v=e^{-r}$ and
$B:=A-\tfrac12 I$,
$
  \Sigma=\int_0^\infty e^{rB}e^{rB^\top}\,dr.
$
The preceding bound \eqref{eq:matrix-exp-growth} yields
$
  \|e^{rB}\|=e^{-r/2}\|e^{rA}\|
  \le Ce^{-(1/2-\theta)r},
$
so the integral is absolutely convergent and
$e^{rB}e^{rB^\top}\to0$ as $r\to\infty$. Therefore,
\begin{align*}
  B\Sigma+\Sigma B^\top
  &=\int_0^\infty \frac{d}{dr}
      \bigl(e^{rB}e^{rB^\top}\bigr)\,dr
   =-I,
\end{align*}
which is equivalent to the Lyapunov equation
$
  \Sigma-A\Sigma-\Sigma A^\top=I.
$
Solving this yields \eqref{eq:Sigma}.
\end{proof}

\begin{proposition}
\label{prop:well-defined}
The following statements hold.
\begin{enumerate}
\item The process $Z$ admits a continuous modification.
\item The process $Z$ satisfies the stochastic integral equation
\begin{equation}
\label{eq:limiting_process_volterra}
    Z_t=B_t+
\int_0^t\frac1sAZ_s\,ds,\qquad t\ge0,
\end{equation}
where $\{B_t\}_{t\ge0}$ is a two-dimensional standard Brownian motion.
\end{enumerate}
\end{proposition}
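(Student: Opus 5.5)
For part (1) I will use Kolmogorov's continuity criterion. Since $Z$ is centered Gaussian, it suffices to show $\E|Z_t-Z_s|^2\le C|t-s|^{\beta}$ on compact intervals $[0,T]$ for some $\beta>0$; Gaussian hypercontractivity then upgrades this to $\E|Z_t-Z_s|^{2m}\le C_m|t-s|^{m\beta}$, and choosing $m\beta>1$ gives a continuous modification.

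To estimate the increment, fix $0<s\le t\le T$ and split
\[
Z_t-Z_s=\int_s^t (t/u)^A\,dB_u+\int_0^s\bigl[(t/u)^A-(s/u)^A\bigr]\,dB_u .
\]
For the first term, \eqref{eq:matrix-exp-growth} and the Itô isometry give a bound of order $t^{2\theta}\int_s^t u^{-2\theta}\,du\le C_T\,(t-s)^{1-2\theta}$, using the subadditivity of $x\mapsto x^{1-2\theta}$. For the second term I write $(t/u)^A-(s/u)^A=\bigl((t/s)^A-I\bigr)(s/u)^A$. Then $\|(t/s)^A-I\|\le C\log(t/s)$ for $t/s\le 2$, while it is bounded by $C(t/s)^\theta$ in general. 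Combined with $\int_0^s\|(s/u)^A\|^2du\le Cs$ (as in the proof of \cref{prop:covariance}), this gives $\E|\cdot|^2\le Cs\min\{\log^2(t/s),(t/s)^{2\theta}\}$. Since $s\log^2(t/s)\le (t-s)^2/s$, a case split ($t-s\le s$ versus $t-s>s$) yields a bound $C_T(t-s)$ in both regimes. Continuity at $0$ is included, since $\E|Z_t|^2=t\,\tr\Sigma$ by \cref{prop:covariance}.

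For part (2), I verify the equation directly using the same Brownian motion $B$. Put $Y_t:=Z_t-\int_0^t s^{-1}AZ_s\,ds$. The integral converges a.s., because $\E\int_0^t s^{-1}|Z_s|\,ds\le C\int_0^t s^{-1/2}ds<\infty$. By the stochastic Fubini theorem, which is justified by the same square-integrability bounds,
\[
\int_0^t \frac1s AZ_s\,ds=\int_0^t\Bigl(\int_u^t \frac1s A(s/u)^A\,ds\Bigr)dB_u .
\]
Since $\frac{d}{ds}(s/u)^A=\frac1s A(s/u)^A$, the inner integral equals $(t/u)^A-I$. Hence $\int_0^t s^{-1}AZ_s\,ds=Z_t-B_t$, which is exactly \eqref{eq:limiting_process_volterra}.

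I expect the main obstacle to be justifying the stochastic Fubini step near $u=0$. The condition to check is $\int_0^t\bigl(\int_u^t s^{-1}\|(s/u)^A\|\,ds\bigr)^2du<\infty$. The inner integral is $O((t/u)^\theta)$ by \eqref{eq:matrix-exp-growth}, so this reduces to $\int_0^t (t/u)^{2\theta}\,du<\infty$, which holds because $\theta<1/2$. Both parts therefore rest on the condition $\mu<1/4$, entering through the choice of $\theta$ in the proof of \cref{prop:covariance}.
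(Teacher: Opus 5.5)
Your proposal is correct and follows essentially the same route as the paper. For the continuous modification, both use Kolmogorov's criterion with the orthogonal splitting $Z_t-Z_s=\bigl((t/s)^A-I\bigr)Z_s+\int_s^t(t/u)^A\,dB_u$ and a case split at $t=2s$; your cruder $(t-s)^{1-2\theta}$ bound on the second term, compensated by higher Gaussian moments, is harmless. For the integral equation, both use stochastic Fubini together with $\partial_s(s/u)^A=s^{-1}A(s/u)^A$; the paper applies Fubini on $[\varepsilon,t]$ and then lets $\varepsilon\downarrow0$.

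One small correction: the hypothesis of the stochastic Fubini theorem is $\int_0^t\bigl(\int_0^s\|s^{-1}A(s/u)^A\|_{\HS}^2\,du\bigr)^{1/2}ds<\infty$, not the square-integrability of the swapped integrand that you single out. This quantity is bounded by $\|A\|\int_0^t s^{-1}(\E\|Z_s\|^2)^{1/2}\,ds\le C\int_0^t s^{-1/2}\,ds$, which is exactly the estimate you already use for almost sure convergence, so nothing is missing.
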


\begin{proof}
Fix $T>0$. By \Cref{prop:covariance},
$
\E\|Z_t\|^2=t\,\operatorname{tr}(\Sigma),
$
$t\ge0$.
For $0<s\le t\le T$, the semigroup property gives
\[
  Z_t-Z_s
  =
  \bigl((t/s)^A-I\bigr)Z_s
  +
  \int_s^t (t/u)^A\,dB_u,
\]
where the two terms on the right-hand side are orthogonal in $L^2$.
If $s\le t\le2s$, then \eqref{eq:matrix-exp-growth},
\[
  (t/s)^A-I
  =
  \int_s^t\frac1rA(r/s)^A\,dr,
\]
and It\^o's isometry yield
\[
  \E\|Z_t-Z_s\|^2\le C(t-s).
\]
If $t>2s$, the same estimate follows from
\[
  \E\|Z_t-Z_s\|^2
  \le2\E\|Z_t\|^2+2\E\|Z_s\|^2
  \le C(t-s),
\]
and the case $s=0$ is immediate. Hence
\[
  \E\|Z_t-Z_s\|^2\le C_T|t-s|,
  \qquad 0\le s,t\le T.
\]
Since $Z_t-Z_s$ is a centered Gaussian,
\[
  \E\|Z_t-Z_s\|^4
  \le
  3\bigl(\E\|Z_t-Z_s\|^2\bigr)^2
  \le
  C_T'|t-s|^2.
\]
The Kolmogorov continuity theorem therefore shows that $Z$ admits a
continuous modification.

We next prove \eqref{eq:limiting_process_volterra}. By
\eqref{eq:matrix-exp-growth},
\[
  \int_0^t
  \left(
    \int_0^r
    \left\|
      \frac1rA(r/s)^A
    \right\|_{\HS}^2\,ds
  \right)^{1/2}dr
  \le C\int_0^t r^{-1/2}\,dr<\infty.
\]
Thus the stochastic Fubini theorem
(see, e.g., Protter~\cite[Theorem~IV.65]{philip2005stochastic}) yields,
for $0<\varepsilon<t$,
\begin{equation}
\label{eq:epsilon-volterra}
  Z_t-Z_\varepsilon
  =B_t-B_\varepsilon
  +\int_\varepsilon^t\frac1rAZ_r\,dr.
\end{equation}
Indeed, this follows by writing
\[
  (t/s)^A-(\varepsilon/s)^A
  =\int_\varepsilon^t\frac1rA(r/s)^A\,dr,
  \qquad s<\varepsilon,
\]
and
\[
  (t/s)^A-I
  =\int_s^t\frac1rA(r/s)^A\,dr,
  \qquad \varepsilon\le s<t,
\]
and then interchanging the order of integration.
Since $\E\|Z_t\|^2=t\,\operatorname{tr}(\Sigma)$,
\[
  \E\left[\int_0^t\frac{\|Z_r\|}{r}\,dr\right]
  \le C\int_0^t r^{-1/2}\,dr<\infty.
\]
Hence $\int_0^t r^{-1}\|Z_r\|\,dr<\infty$ almost surely. Letting
$\varepsilon\downarrow0$ in \eqref{eq:epsilon-volterra} and using the
continuity of $Z$ and $B$, with $Z_0=B_0=0$, gives \eqref{eq:limiting_process_volterra}.
\end{proof}

\begin{remark}
The stochastic integral equation \eqref{eq:limiting_process_volterra}
admits a simple interpretation under the logarithmic time change.
Formally writing
\[
dZ_t=dB_t+\frac1tAZ_t\,dt
\]
and setting $X_s:=e^{-s/2}Z_{e^s}$, $s\in \R$, define
$
W_s
:=
\int_1^{e^s}
u^{-1/2}\,dB_u,
$
$s\in\R$,
where the stochastic integral is understood as an oriented integral when $s<0$. Then $\{W_s\}_{s\in\R}$ is a two-sided,
two-dimensional Brownian motion. A direct application of Itô's formula yields
\[
dX_s
=
\left(
A-\frac12I
\right)
X_s\,ds
+
dW_s.
\]
Thus, the Lamperti transform of $Z$ is an Ornstein--Uhlenbeck process with
drift matrix $A-\frac12I$.
Since the eigenvalues of $A-\frac12I$ have negative real parts under the
assumption $\mu<1/4$,
this equation admits the stationary solution
\[
X_s
=
\int_{-\infty}^s
e^{(A-\frac12I)(s-r)}\,dW_r.
\]
For $s\le t$, we have $\E[X_sX_t^\top]=\Sigma e^{(A^\top-\frac12I)(t-s)}$. In particular, $\E[X_sX_s^\top]=\Sigma$.
\end{remark}

\section{Proof of the joint functional limit}
\label{sec:proof}

\subsection{Martingale functional limit}
\label{subsec:martingale-fclt}

Since
\[
  \E\bigl[X_{n+1}^{(1)}\bigm|\F_n\bigr]=\alpha_1\frac{S_n^{(2)}}{n},
  \qquad
  \E\bigl[X_{n+1}^{(2)}\bigm|\F_n\bigr]=\alpha_2\frac{S_n^{(1)}}{n},
\]
writing $\Delta S_{n+1}:=S_{n+1}-S_n$ yields
\begin{align*}
  \E[\Delta S_{n+1}\mid\F_n]
  &=\E\Biggl[\begin{pmatrix}X_{n+1}^{(1)}\\ X_{n+1}^{(2)}\end{pmatrix}\Biggm|\F_n\Biggr]
  =\begin{pmatrix}
    \alpha_1 S_n^{(2)}/n\\
    \alpha_2 S_n^{(1)}/n
  \end{pmatrix}
  =\frac1n A S_n.
\end{align*}
Therefore the sequence $\{D_{n}\}_{n\ge2}$ defined by
\[
  D_{n+1}:=\Delta S_{n+1}-\frac1n A S_n,\qquad n\ge1,
\]
is a martingale-difference sequence with respect to $\{\F_n\}_{n\ge0}$.
We also set $M_0:=0$. Adopting the convention that an empty sum equals zero, define
\[
  M_n:=\sum_{k=1}^{n-1}D_{k+1},\qquad n\ge1.
\]
Then $\{M_n\}_{n\ge0}$ is a two-dimensional martingale with respect to $\{\F_n\}_{n\ge0}$.
Moreover, for every $n\ge2$,
\begin{align*}
  M_n
  &=\sum_{k=1}^{n-1}D_{k+1}
  =\sum_{k=1}^{n-1}\Bigl(\Delta S_{k+1}-\frac1k A S_k\Bigr)
  =S_n-S_1-\sum_{k=1}^{n-1}\frac1k A S_k.
\end{align*}
Hence
\begin{equation}
\label{eq:ERWs-martingale}
  S_n=S_1+M_n+\sum_{k=1}^{n-1}\frac1k A S_k,\qquad n\ge2.
\end{equation}
This may be viewed as the discrete counterpart of
the stochastic integral equation
\eqref{eq:limiting_process_volterra}
satisfied by the limiting Gaussian process.

The following second-moment estimate for $S_n^{(i)}$ will be used in the proof of \Cref{prop:martingale-fclt}.

\begin{lemma}
\label{lem:square-order}
For $i=1,2$, $ \E\bigl[(S_n^{(i)})^2\bigr]=O(n)$ as $n\to \infty$.
\end{lemma}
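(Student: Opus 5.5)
The plan is to derive an exact linear recursion for the second-moment matrix $G_n:=\E[S_nS_n^\top]$ and then compare it with the continuous Lyapunov equation behind \Cref{prop:covariance}. Since the entries of $G_n$ are $\E[S_n^{(i)}S_n^{(j)}]$, the claim follows once we show that $\|G_n\|=O(n)$.

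First, write $S_{n+1}=S_n+\Delta S_{n+1}$ and take the conditional expectation given $\F_n$. We use $\E[\Delta S_{n+1}\mid\F_n]=\frac1nAS_n$ together with the identity $\E[\Delta S_{n+1}\Delta S_{n+1}^\top\mid\F_n]=I+R_n$. Here the diagonal entries equal $1$ because the steps are $\pm1$. The off-diagonal entry is
\[
\E[X_{n+1}^{(1)}X_{n+1}^{(2)}\mid\F_n]=\alpha_1\alpha_2\,\frac{S_n^{(2)}S_n^{(1)}}{n^2},
\]
since $\xi^{(1)},\xi^{(2)},U^{(1)},U^{(2)}$ are independent. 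Taking expectations gives
\[
G_{n+1}=G_n+\frac1n\bigl(AG_n+G_nA^\top\bigr)+I+\frac{\mu}{n^2}\,\E\bigl[S_n^{(1)}S_n^{(2)}\bigr]\begin{pmatrix}0&1\\1&0\end{pmatrix}.
\]
The last term is bounded by $\frac{C}{n^2}\|G_n\|$. So we obtain $G_{n+1}=(\mathcal L_n)(G_n)+I$, where $\mathcal L_n=\mathrm{Id}+\frac1n\mathcal A+O(n^{-2})$ is a linear map on symmetric $2\times2$ matrices and $\mathcal A(G)=AG+GA^\top$.

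Next, we exploit the spectrum of $\mathcal A$. Its eigenvalues are the sums $\lambda_i+\lambda_j$ of eigenvalues of $A$, namely $\pm2\sqrt\mu$ and $0$ (or purely imaginary numbers when $\mu<0$). Under $\mu<1/4$ they all have real part $<1$, so we may choose $\theta$ with $\max\operatorname{Re}(\lambda_i+\lambda_j)\le2\theta<1$, as in the proof of \Cref{prop:covariance}. Iterating the recursion gives
\[
G_n=\Pi_{n,1}G_1+\sum_{k=1}^{n-1}\Pi_{n,k+1}I,\qquad \Pi_{n,k}:=\mathcal L_{n-1}\cdots\mathcal L_k.
\]
The key estimate is $\|\Pi_{n,k}\|\le C(n/k)^{2\theta}$. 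To get it, compare $\Pi_{n,k}$ with $\exp\bigl(\mathcal A\sum_{j=k}^{n-1}1/j\bigr)\approx(n/k)^{\mathcal A}$. One can use a norm adapted to $\mathcal A$, namely the Jordan form, in which $\|\mathrm{Id}+\mathcal A/j\|\le 1+(2\theta+\epsilon)/j$. For $\mu=0$, $\mathcal A$ may be nilpotent and contribute polynomial factors in $\log(n/k)$; these are absorbed by taking $2\theta>0$. The $O(j^{-2})$ corrections have a convergent product. Then
\[
\|G_n\|\le C n^{2\theta}+C\sum_{k\le n}(n/k)^{2\theta}\le Cn,
\]
because $2\theta<1$.

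The main obstacle is this product bound $\|\Pi_{n,k}\|\le C(n/k)^{2\theta}$. In particular, a careful treatment is needed when $\mathcal A$ is not diagonalizable, either at $\mu=0$ or from the degenerate parameter choices. As an alternative that avoids the product bound, one can use a Lyapunov-function argument. Take the matrix $\Sigma$ of \Cref{prop:covariance}, which is positive definite and satisfies $\Sigma-A\Sigma-\Sigma A^\top=I$. Apply the dual version of this equation to build a positive-definite $Q$ with $\mathcal A^*(Q)\preceq (1-\delta)Q$. Then $f_n:=\tr(QG_n)$ satisfies
\[
f_{n+1}\le\Bigl(1+\frac{1-\delta}{n}+\frac{C}{n^2}\Bigr)f_n+C,
\]
and a standard induction gives $f_n\le C'n$.
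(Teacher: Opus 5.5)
Your proof is correct, but it is organized differently from the paper's. The paper splits according to the sign of $\mu$ and reduces each case to a scalar recursion:
\begin{itemize}
\item for $0<\mu<1/4$ it projects onto the left eigenvectors $v_\pm^\top=(\alpha_2,\pm\sqrt\mu)$ of $A$ and iterates $\E[(v_\pm^\top S_{n+1})^2]\le(1\pm2\sqrt\mu/n)\E[(v_\pm^\top S_n)^2]+C$;
\item for $\mu<0$ it uses the quadratic form $S_n^\top PS_n$ with $P=\diag(|\alpha_2|,|\alpha_1|)$ and $PA+A^\top P=0$;
\item for $\mu=0$ it solves the triangular recursion by hand.
\end{itemize}

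You instead treat all cases at once. You use the exact matrix recursion $G_{n+1}=\mathcal L_n(G_n)+I$ together with the spectral abscissa $2\sqrt{\mu_+}<1$ (where $\mu_+=\max\{\mu,0\}$) of $G\mapsto AG+GA^\top$. In an $\epsilon$-Jordan-adapted norm one has $\|\mathcal L_j\|\le1+(2\sqrt{\mu_+}+\epsilon)/j+C/j^2$, so submultiplicativity alone gives $\|\Pi_{n,k}\|\le C(n/k)^{2\theta}$. The comparison with $\exp(\mathcal A\sum_j 1/j)$ is therefore unnecessary. It would also be awkward, because the $n^{-2}$ correction $G\mapsto\mu G_{12}J$ ($J$ the swap matrix) does not commute with $\mathcal A$. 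The non-diagonalizable case $\mu=0$ costs nothing in this norm.

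Your Lyapunov-function alternative is cleaner still, and it can be made fully explicit. Let $Q$ be the matrix $\Sigma$ of \Cref{prop:covariance} with $\alpha_1$ and $\alpha_2$ interchanged. Then $Q\succ0$ and $A^\top Q+QA=Q-I\preceq(1-\lambda_{\max}(Q)^{-1})Q$. Since $G_n\succeq0$, it follows that $f_n=\tr(QG_n)$ obeys your scalar recursion; the $n^{-2}$ term is at most $Cf_n/n^2$ because $Q\succ0$. This is precisely the paper's $\mu<0$ device, with the identity $PA+A^\top P=0$ relaxed to a Lyapunov inequality.

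The paper's route buys complete elementarity: explicit vectors and quadratic forms, with no operator spectral theory. Yours buys a single case-free argument. It carries over verbatim to any interaction matrix whose eigenvalues have real part below $1/2$, provided the conditional second moment of the increments is $I$ plus a term of order $\|S_n\|^2/n^2$.
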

\begin{proof}
We consider the three possible signs of $\mu$.
Suppose first that $0<\mu<1/4$. Let
$
  v_\pm^\top=(\alpha_2,\pm\sqrt\mu)
$
and
$
  Y_n^\pm:=v_\pm^\top S_n.
$
Since $v_\pm^\top A=\pm\sqrt\mu\,v_\pm^\top$ and the increments of
$S_n$ are uniformly bounded,
\[
  \E[(Y_{n+1}^\pm)^2]
  \le
  \left(1\pm\frac{2\sqrt\mu}{n}\right)
  \E[(Y_n^\pm)^2]+C.
\]
Set $\beta:=2\sqrt\mu<1$ and
\[
  P_n:=\prod_{k=1}^{n-1}\left(1+\frac{\beta}{k}\right).
\]
Then $P_n\asymp n^\beta$. Writing
$a_n:=\E[(Y_n^+)^2]$ and iterating the above recursion, we obtain
\[
  a_n
  \le
  P_n\left(
    a_1+C\sum_{k=1}^{n-1}P_{k+1}^{-1}
  \right)
  \lesssim
  n^\beta\left(1+\sum_{k=1}^{n-1}k^{-\beta}\right)
  =O(n),
\]
where the last estimate uses $\beta<1$. The same bound holds for
$\E[(Y_n^-)^2]$, since the factor $1-\beta/n$ is no larger than
$1+\beta/n$. Hence
$
  \E[(Y_n^\pm)^2]=O(n).
$
Since $v_+$ and $v_-$ are linearly independent, the same bound holds
for both coordinates of $S_n$.

Suppose next that $\mu<0$. Set
$
  P:=\operatorname{diag}(|\alpha_2|,|\alpha_1|).
$
Since $\alpha_1$ and $\alpha_2$ have opposite signs, $PA+A^\top P=0$. For
$Q_n:=S_n^\top P S_n$,
\begin{align*}
  \E[Q_{n+1}\mid\F_n]
  =Q_n+\frac{2}{n}S_n^\top PA S_n
    +\E[\Delta S_{n+1}^\top P\Delta S_{n+1}\mid\F_n]
  \le Q_n+C,
\end{align*}
where the middle term vanishes because
$2S_n^\top PA S_n=S_n^\top(PA+A^\top P)S_n=0$.
Hence $\E[Q_n]=O(n)$. Since
$
  Q_n=|\alpha_2|(S_n^{(1)})^2+|\alpha_1|(S_n^{(2)})^2,
$
and $\alpha_1\alpha_2\ne0$, both coordinate bounds follow.

Finally, suppose that $\mu=0$. By symmetry, assume $\alpha_2=0$. Then
$\E[(S_n^{(2)})^2]=n$. Put
$
  c_n:=\E[S_n^{(1)}S_n^{(2)}]
$
and
$
  d_n:=\E[(S_n^{(1)})^2].
$
Using
$
  \E[X_{n+1}^{(1)}\mid\F_n]=\frac{\alpha_1}{n}S_n^{(2)}
$
and
$
  \E[X_{n+1}^{(2)}\mid\F_n]=0,
$
together with
$\E[X_{n+1}^{(1)}X_{n+1}^{(2)}\mid\F_n]=0$, we obtain
\[
  c_{n+1}=c_n+\alpha_1,
  \qquad
  d_{n+1}=d_n+\frac{2\alpha_1}{n}c_n+1.
\]
Hence $c_n=O(n)$ and then $d_n=O(n)$. This completes the proof.
\end{proof}

\begin{proposition}
\label{prop:martingale-fclt}
We have the distributional convergence in $D([0,\infty),\R^2)$,
\begin{equation}
\label{eq:martingale-fclt}
  \Bigl\{\frac{M_{[nt]}}{\sqrt{n}}\Bigr\}_{t\ge0}
  \ \Longrightarrow\
  \{B_t\}_{t\ge0}
  \qquad\text{as }n\to\infty,
\end{equation}
where $\{B_t\}_{t\ge0}$ is a two-dimensional standard Brownian motion.
\end{proposition}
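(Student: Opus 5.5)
We plan to apply the functional central limit theorem for vector-valued martingale difference arrays, for instance in the form of Ethier--Kurtz (Theorem 7.1.4) or Helland's theorem, to the array $D_{k+1}/\sqrt n$, $1\le k\le [nt]-1$. Two conditions must be verified. The first is convergence of the predictable quadratic variation:
\[
\frac1n\sum_{k=1}^{[nt]-1}\E\bigl[D_{k+1}D_{k+1}^\top\bigm|\F_k\bigr]\to tI
\]
in probability, for each $t\ge0$. The second is a conditional Lindeberg condition. The Lindeberg condition is immediate. The increments $\Delta S_{k+1}$ lie in $\{-1,1\}^2$, and $\|AS_k/k\|\le\max|\alpha_i|\le1$, so $\|D_{k+1}\|\le 2\sqrt2$. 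Consequently $\sum_k\E[\|D_{k+1}\|^2\mathbf 1\{\|D_{k+1}\|>\varepsilon\sqrt n\}\mid\F_k]/n$ vanishes identically once $n$ is large. The limit is then a continuous martingale with quadratic variation $tI$, that is, a standard two-dimensional Brownian motion.

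For the quadratic variation we compute the conditional covariance explicitly. Since $(X_{k+1}^{(i)})^2=1$, the diagonal entries are
\[
\E[(D^{(1)}_{k+1})^2\mid\F_k]=1-\alpha_1^2(S_k^{(2)}/k)^2,\qquad \E[(D^{(2)}_{k+1})^2\mid\F_k]=1-\alpha_2^2(S_k^{(1)}/k)^2.
\]
For the off-diagonal entry we use the independence of $\xi^{(1)}_{k+1},\xi^{(2)}_{k+1},U^{(1)}_k,U^{(2)}_k$ given $\F_k$. This gives $\E[X^{(1)}_{k+1}X^{(2)}_{k+1}\mid\F_k]=\alpha_1\alpha_2 S_k^{(1)}S_k^{(2)}/k^2$, which equals the product of the conditional means. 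Hence the conditional covariance of $D^{(1)}_{k+1}$ and $D^{(2)}_{k+1}$ is exactly zero. It therefore suffices to show that
\[
\frac1n\sum_{k=1}^{[nt]}\Bigl(\frac{S_k^{(i)}}{k}\Bigr)^2\to0
\]
in probability (indeed in $L^1$) for $i=1,2$.

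This follows from \Cref{lem:square-order}, which gives $\E[(S_k^{(i)}/k)^2]\le C/k$. Hence the expectation of the above average is at most $Cn^{-1}\sum_{k\le nt}k^{-1}=O(n^{-1}\log n)\to0$. Combining this with the exact computation above yields $\langle M\rangle_{[nt]}/n\to tI$ in $L^1$, and therefore in probability. The initial term, together with the fact that $M_n$ begins at $n=2$, contributes only an $O(1/\sqrt n)$ perturbation, which is negligible. Joint convergence over several times $t$, and tightness in $D([0,\infty),\R^2)$, are provided by the cited martingale FCLT once these two conditions hold for every $t$. The main step is really the variance bound of \Cref{lem:square-order}, which is already available. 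Beyond it, the only point requiring care is verifying the vanishing of the conditional cross-covariance, which rests on the conditional independence of the two sampling mechanisms.
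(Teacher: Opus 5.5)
Your proposal is correct and follows the paper's proof essentially step by step. Bounded increments give the Lindeberg condition, the conditional covariance matrix is computed exactly (with the off-diagonal entry vanishing by conditional independence of the two sampling mechanisms), and \Cref{lem:square-order} makes the correction $\frac1n\sum_k (S_k^{(i)}/k)^2$ vanish in $L^1$. The only differences are cosmetic. The paper invokes Whitt's martingale FCLT and checks quadratic-variation convergence uniformly on $[0,T]$, using monotonicity of the correction sum. Your pointwise-in-$t$ version suffices for the Ethier--Kurtz/Helland form you cite.
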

\begin{proof}
Set
\[
  M_n(t):=\frac{M_{[nt]}}{\sqrt n},
  \qquad
  \mathcal G_t^{(n)}:=\mathcal F_{[nt]}.
\]
We verify the conditions of the martingale functional central limit theorem
\cite[Theorem~2.1(ii)]{whitt2007proofs} (see also
Ethier--Kurtz~\cite{ethier1986markov},
Hall--Heyde~\cite{hall1980martingale} and
Jacod--Shiryaev~\cite{jacod2003limit}).

Since $\|D_{k+1}\|\le2\sqrt2$, the maximum jump of $M_n$ is bounded by
$2\sqrt{2}/\sqrt{n}$ and hence tends to zero. Moreover, for every
$\varepsilon>0$, the indicator
$
  \mathbf{1}_{\{\|D_{k+1}\|>\varepsilon\sqrt n\}}
$
vanishes for all sufficiently large $n$, so the Lindeberg condition is
satisfied.

For the predictable quadratic covariations, writing
$\nu_n(t):=([nt]-1)_+$, we have
\[
  \langle M_n^{(i)},M_n^{(j)}\rangle(t)
  =
  \frac1n\sum_{k=1}^{\nu_n(t)}
  \E[D_{k+1}^{(i)}D_{k+1}^{(j)}\mid\F_k].
\]
In particular,
\[
  \langle M_n^{(i)}\rangle(t)
  =
  \frac{\nu_n(t)}{n}
  -
  \frac{\alpha_i^2}{n}
  \sum_{k=1}^{\nu_n(t)}
  \left(\frac{S_k^{(3-i)}}{k}\right)^2.
\]
By~\Cref{lem:square-order}, for every $T>0$,
\[
  \E\left[
    \sup_{0\le t\le T}
    \frac1n
    \sum_{k=1}^{\nu_n(t)}
    \left(\frac{S_k^{(3-i)}}{k}\right)^2
  \right]
  \le
  \frac{C}{n}\sum_{k=1}^{\nu_n(T)}\frac1k
  \longrightarrow0.
\]
Since $\sup_{0\le t\le T}|\nu_n(t)/n-t|\le2/n$, it follows from
Markov's inequality that
\[
  \sup_{0\le t\le T}
  |\langle M_n^{(i)}\rangle(t)-t|
  \xrightarrow{\Prob}0.
\]
Moreover,
\[
  \E[X_{k+1}^{(1)}X_{k+1}^{(2)}\mid\F_k]
  =
  \alpha_1\alpha_2
  \frac{S_k^{(1)}S_k^{(2)}}{k^2}.
\]
Since
\[
  \E[X_{k+1}^{(1)}\mid\F_k]
  =
  \alpha_1\frac{S_k^{(2)}}{k},
  \qquad
  \E[X_{k+1}^{(2)}\mid\F_k]
  =
  \alpha_2\frac{S_k^{(1)}}{k},
\]
we obtain
\[
  \E[D_{k+1}^{(1)}D_{k+1}^{(2)}\mid\F_k]
  =
  \E[X_{k+1}^{(1)}X_{k+1}^{(2)}\mid\F_k]
  -
  \E[X_{k+1}^{(1)}\mid\F_k]
  \E[X_{k+1}^{(2)}\mid\F_k]
  =
  0.
\]
Hence $\langle M_n^{(1)},M_n^{(2)}\rangle\equiv0$. Thus, for
$i,j\in\{1,2\}$,
\[
  \sup_{0\le t\le T}
  \bigl|
    \langle M_n^{(i)},M_n^{(j)}\rangle(t)-\delta_{ij}t
  \bigr|
  \xrightarrow{\Prob}0.
\]
Finally, since the jumps of the predictable covariations are bounded by
$4/n$, their maximum jumps vanish. Whitt's theorem now yields
\eqref{eq:martingale-fclt}.
\end{proof}

\subsection{Functional limit for the rescaled walks}
\label{subsec:aux}

We retain the notation of \Cref{subsec:martingale-fclt}. In particular,
\[
 A=\begin{pmatrix}0&\alpha_1\\\alpha_2&0\end{pmatrix},\quad
 \mu=\alpha_1\alpha_2<\frac{1}{4},\quad
 D_{k+1}=S_{k+1}-S_k-\frac{AS_k}{k},\quad
 M_m=\sum_{k=1}^{m-1}D_{k+1}.
\]
Fix $0<\theta<1/2$, with $\theta>\sqrt{\mu}$ when $\mu>0$. Throughout this subsection, constants may depend on the fixed parameters. We use the bound $\|x^A\|\le Cx^\theta$ for $x\ge1$, established in \eqref{eq:matrix-exp-growth}. The proof consists of convergence away from the origin, a maximal estimate controlling the omitted part, and removal of the truncation.

\begin{lemma}
\label{lem:fundamental}
For integers $1\le\ell\le m$, let
\[
 \Phi(m,\ell)=\prod_{j=\ell}^{m-1}(I+A/j),\qquad \Phi(m,m)=I.
\]
Then the following assertions hold.
\begin{enumerate}
\item For $m\ge1$,
\begin{equation}\label{sec43:decomposition}
 S_m=\Phi(m,1)S_1+Y_m,\qquad
 Y_m=\sum_{k=1}^{m-1}\Phi(m,k+1)D_{k+1},\qquad Y_0=0.
\end{equation}
This identity also holds without the assumption $\mu<1/4$.
\item For every $0<\delta<T<\infty$,
\[
 \sup_{\delta\le s\le t\le T}
 \|\Phi(\lfloor nt\rfloor,\lfloor ns\rfloor)-(t/s)^A\|\longrightarrow0.
\]
\item For all $1\le\ell\le m$, $\|\Phi(m,\ell)\|\le C(m/\ell)^\theta$.
\end{enumerate}
\end{lemma}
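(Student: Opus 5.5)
For (1), I will argue by induction on $m$ from the one-step recursion. By definition of $D_{k+1}$,
\[
S_{k+1}=(I+A/k)S_k+D_{k+1},\qquad k\ge1.
\]
Iterating from $k=1$ to $m-1$ gives
\[
S_m=\Phi(m,1)S_1+\sum_{k=1}^{m-1}\Phi(m,k+1)D_{k+1},
\]
with the empty product convention $\Phi(m,m)=I$. The induction step is the matrix identity $\Phi(m+1,\ell)=(I+A/m)\Phi(m,\ell)$. Since the matrices $I+A/j$ are all polynomials in $A$, they commute, so the ordering of the product is irrelevant. Nothing here uses $\mu<1/4$.

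For (3) and (2), I will diagonalize or use the formula \eqref{eq:e-matrix-function}. Because everything commutes, $\Phi(m,\ell)$ is a function of $A$. The eigenvalues of $A$ are $\pm\sqrt{\mu}$, real when $\mu\ge0$ and purely imaginary when $\mu<0$. When $\mu\ne0$, $A$ is diagonalizable over $\mathbb C$ with a fixed eigenbasis, so
\[
\|\Phi(m,\ell)\|\le C\max_{\lambda}\prod_{j=\ell}^{m-1}|1+\lambda/j|.
\]
Using $|1+\lambda/j|\le \exp(\operatorname{Re}\lambda/j+|\lambda|^2/j^2)$, the product is bounded by $C(m/\ell)^{\max\operatorname{Re}\lambda}\le C(m/\ell)^{\theta}$. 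Here I use $\sum_{j=\ell}^{m-1}1/j\le\log(m/\ell)+1/\ell$ and the summability of $1/j^2$. One minor point: if some factor $1+\lambda/j$ vanishes or is negative (for $\lambda=-\sqrt\mu$ and small $j$), the absolute value bound still works.

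When $\mu=0$, $A$ is nilpotent with $A^2=0$. Then
\[
\Phi(m,\ell)=I+\Bigl(\sum_{j=\ell}^{m-1}1/j\Bigr)A,
\]
which is bounded by $C(1+\log(m/\ell))\le C(m/\ell)^\theta$.

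For (2), in the same eigen-representation I will compare $\prod_{j=\ell}^{m-1}(1+\lambda/j)$ with $(m/\ell)^\lambda$, taking $m=\lfloor nt\rfloor$ and $\ell=\lfloor ns\rfloor$ (in the nilpotent case, compare $\sum 1/j$ with $\log(t/s)$). Taking logarithms for $j>2|\lambda|$, we get
\[
\sum_j\log(1+\lambda/j)=\lambda\sum_j 1/j+O(1/\ell),
\]
and $\sum_{j=\ell}^{m-1}1/j=\log(m/\ell)+O(1/\ell)$. Since $\ell\ge n\delta-1\to\infty$, these errors are uniformly $O(1/(n\delta))$. Moreover $\log(m/\ell)-\log(t/s)=O(1/(n\delta))$ uniformly on $\delta\le s\le t\le T$. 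The exponentials remain in a compact set since $|\log(t/s)|\le\log(T/\delta)$, so uniform convergence of the exponents gives uniform convergence of the matrices.

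The main obstacle is modest: handling the nilpotent case $\mu=0$ and the complex eigenvalue case uniformly. One could instead write $\Phi(m,\ell)=a_{m,\ell}I+b_{m,\ell}A$ and use scalar recursions, but the eigen-decomposition route via \eqref{eq:e-matrix-function} (or the Jordan form) is cleanest.
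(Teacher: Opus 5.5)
Your proof is correct; it differs from the paper's only in how (2) and (3) are implemented. Part (1) is the paper's argument verbatim: iterate $S_{k+1}=(I+A/k)S_k+D_{k+1}$.

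For (2) and (3) the underlying idea is also shared: $\log(1+\lambda/j)=\lambda/j+O(j^{-2})$ together with $\sum_{j=\ell}^{m-1}j^{-1}=\log(m/\ell)+O(\ell^{-1})$. The paper, however, works coordinate-free. Since every matrix involved commutes with $A$, it uses the matrix logarithm $\log(I+A/j)=A/j+O(j^{-2})$ for large $j$. This gives the factorization $\Phi(m,\ell)=(m/\ell)^A e^{E_{m,\ell}}$ with $\|E_{m,\ell}\|\le C/\ell$, and hence the quantitative kernel bound \eqref{sec43:kernel-error}, $\|\Phi(m,\ell)-(m/\ell)^A\|\le C\ell^{-1}(m/\ell)^\theta$. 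From this it reads off (2), with rate $O(n^{-1})$, and (3) for large $\ell$. Small $\ell$ are then absorbed via $\Phi(m,\ell)=\Phi(m,j_0)\Phi(j_0,\ell)$.

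Your eigenbasis reduction has a cost: you need a separate treatment of the nilpotent case $\mu=0$, where $A$ is not diagonalizable, whereas the matrix-logarithm argument covers all signs of $\mu$ at once. In exchange, your inequality $|1+z|\le e^{\operatorname{Re}z+|z|^2}$, valid for every complex $z$, proves (3) for all $\ell\ge1$ in one stroke with no splitting at $j_0$. It also shows explicitly that the growth exponent is $\max\operatorname{Re}\lambda=\sqrt{\mu_+}$, up to a logarithm when $\mu=0$.

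The paper later invokes the explicit error bound \eqref{sec43:kernel-error} in \Cref{lem:truncated-transform-approx}. Your scalar computation gives the same $O(1/\ell)$ rate eigenvalue by eigenvalue, so nothing is lost if you record it.

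A minor point: under $\mu<1/4$ you have $\sqrt{\mu}<1/2$, so $1-\sqrt{\mu}/j>0$ for every $j\ge1$. The vanishing or negative factor you guard against therefore never occurs in (2) and (3).
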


\begin{proof}
The recursion $S_{k+1}=(I+A/k)S_k+D_{k+1}$ gives (1) by iteration. For sufficiently large $j$,
\[
 \log(I+A/j)=A/j+O(j^{-2}).
\]
All matrices in these expansions commute with $A$. Since
$\sum_{j=\ell}^{m-1}j^{-1}=\log(m/\ell)+O(\ell^{-1})$, uniformly for $m\ge\ell$, we obtain
\begin{equation}\label{sec43:factorization}
 \Phi(m,\ell)=(m/\ell)^Ae^{E_{m,\ell}},\qquad
 \|E_{m,\ell}\|\le C/\ell
\end{equation}
for all sufficiently large $\ell$. Hence
\begin{equation}\label{sec43:kernel-error}
 \|\Phi(m,\ell)-(m/\ell)^A\|
 \le\frac{C}{\ell}(m/\ell)^\theta.
\end{equation}
On $\delta\le s\le t\le T$, the ratio $\lfloor nt\rfloor/\lfloor ns\rfloor$ equals $t/s+O(n^{-1})$ uniformly. The map $x\mapsto x^A$ has bounded derivative on the resulting compact interval, so \eqref{sec43:kernel-error} proves (2), in fact with error $O(n^{-1})$. Formula \eqref{sec43:factorization} also proves (3) for large $\ell$. The remaining starting indices are absorbed using $\Phi(m,\ell)=\Phi(m,j_0)\Phi(j_0,\ell)$ and enlarging the constant over the finitely many pairs $\ell\le m<j_0$.
\end{proof}

Fix $0<\varepsilon<T$. For $n$ sufficiently large that $a:=\lfloor n\varepsilon\rfloor\ge1$, and $m:=\lfloor nt\rfloor$, define, for $x\in D([0,T],\R^2)$,
\[
 G_n^{(\varepsilon)}(x)(t)=
 \begin{cases}
 \displaystyle\sum_{k=a}^{m-1}\Phi(m,k+1)
 \{x((k+1)/n)-x(k/n)\},&m>a,\\[3pt]
 0,&m=a,
 \end{cases}
 \qquad \varepsilon\le t\le T.
\]
The limiting map is
\begin{equation}\label{sec43:limit-map}
 G^{(\varepsilon)}(x)(t)=x(t)-(t/\varepsilon)^Ax(\varepsilon)
 +\int_\varepsilon^t(t/s)^A\frac{Ax(s)}{s}ds,
 \qquad \varepsilon\le t\le T.
\end{equation}

\begin{lemma}
\label{lem:truncated-transform-approx}
If $x_n\to x$ in the Skorokhod $J_1$ topology on $D([0,T],\R^2)$ and $x$ is continuous, then $G_n^{(\varepsilon)}(x_n)\to G^{(\varepsilon)}(x)$ uniformly on $[\varepsilon,T]$.
\end{lemma}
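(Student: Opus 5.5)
The plan is to rewrite $G_n^{(\varepsilon)}$ with a discrete summation by parts, so that it becomes a continuous functional of $x_n$ sampled at grid points. Then the uniform convergence $x_n(\cdot)\to x(\cdot)$ can be fed in. The first step is to use the fact that $J_1$ convergence to a continuous limit is equivalent to uniform convergence. Hence $\sup_{[0,T]}\|x_n-x\|\to0$, and $x$ is uniformly continuous and bounded on $[0,T]$.

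For the summation by parts, write $y_k:=x_n(k/n)$. Then
\[
\sum_{k=a}^{m-1}\Phi(m,k+1)(y_{k+1}-y_k)
= y_m-\Phi(m,a)y_a+\sum_{k=a+1}^{m-1}\bigl(\Phi(m,k)-\Phi(m,k+1)\bigr)y_k .
\]
Since $\Phi(m,k)=\Phi(m,k+1)(I+A/k)$, we get $\Phi(m,k)-\Phi(m,k+1)=\Phi(m,k+1)A/k$. So
\[
G_n^{(\varepsilon)}(x_n)(t)=x_n(m/n)-\Phi(m,a)x_n(a/n)+\sum_{k=a+1}^{m-1}\Phi(m,k+1)\frac{A\,x_n(k/n)}{k}.
\]
The formula also covers the case $m=a$, where every term vanishes. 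This is the exact discrete analogue of \eqref{sec43:limit-map}.

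Next I would compare the three terms with their continuous counterparts, uniformly in $t\in[\varepsilon,T]$.

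\textbf{First term.} The difference $x_n(\lfloor nt\rfloor/n)-x(t)$ is bounded by $\sup\|x_n-x\|$ plus the modulus of continuity of $x$ at $1/n$, so it tends to $0$.

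\textbf{Second term.} By \Cref{lem:fundamental}(2), with $s=\varepsilon$ (note $a=\lfloor n\varepsilon\rfloor$), we have $\Phi(m,a)\to(t/\varepsilon)^A$ uniformly. Combined with $x_n(a/n)\to x(\varepsilon)$ and the bound $\|(t/\varepsilon)^A\|\le C(T/\varepsilon)^\theta$, this term converges uniformly.

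\textbf{Third term.} Write it as the Riemann-type integral
\[
\int_{(a+1)/n}^{m/n}\Phi\bigl(m,\lfloor nu\rfloor+1\bigr)\,\frac{A\,x_n(\lfloor nu\rfloor/n)}{\lfloor nu\rfloor/n}\,du .
\]
On $u\in[\varepsilon,T]$ the following hold:
\begin{itemize}
\item the kernel $\Phi(m,\lfloor nu\rfloor+1)$ converges uniformly to $(t/u)^A$ over $\varepsilon\le u\le t\le T$, by \Cref{lem:fundamental}(2) together with the $O(n^{-1})$ error from \eqref{sec43:kernel-error} to absorb the shift by one index;
\item $x_n(\lfloor nu\rfloor/n)\to x(u)$ uniformly;
\item $n/\lfloor nu\rfloor\to 1/u$ uniformly, since $u\ge\varepsilon$.
\end{itemize}
All factors are uniformly bounded, using \Cref{lem:fundamental}(3) and $\sup_n\sup\|x_n\|<\infty$. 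The integrand therefore converges uniformly in $(t,u)$, and the endpoint discrepancies $(a+1)/n-\varepsilon$ and $m/n-t$ are $O(1/n)$ against a bounded integrand. This gives uniform convergence to $\int_\varepsilon^t (t/s)^A A x(s)s^{-1}\,ds$.

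The main obstacle I expect is uniformity in the two-parameter kernel approximation. \Cref{lem:fundamental}(2) is stated for $\Phi(\lfloor nt\rfloor,\lfloor ns\rfloor)$, while the sum uses $\Phi(m,k+1)$ with $k=\lfloor nu\rfloor$. I would handle this by writing $\Phi(m,k+1)=\Phi(m,k)(I+A/k)^{-1}$ for large $k\ge a$. Since $\|(I+A/k)^{-1}-I\|=O(1/n)$ uniformly for $k\ge n\varepsilon/2$, this reduces the shifted kernel to the unshifted one with a uniformly small error.
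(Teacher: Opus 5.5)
Your argument is essentially the paper's: summation by parts via $\Phi(m,k)-\Phi(m,k+1)=\Phi(m,k+1)A/k$, then a uniform term-by-term comparison in which the last sum is treated as a Riemann sum for the integral in \eqref{sec43:limit-map} and the kernels are controlled by \Cref{lem:fundamental}. One small indexing slip: with the remaining sum starting at $k=a+1$, the boundary term in the exact identity is $\Phi(m,a+1)x_n(a/n)$, as in \eqref{sec43:abel}, not $\Phi(m,a)x_n(a/n)$ (check $m=a+1$). The two differ by $\Phi(m,a+1)Ax_n(a/n)/a=O(1/n)$ uniformly in $t$, so your conclusion stands, but the case $m=a$ must then be treated separately (trivially, as the paper does).
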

\begin{proof}
Since, on a compact interval, $J_1$ convergence to a continuous limit implies uniform convergence, the assumption gives $\|x_n-x\|_\infty\to0$. For $m>a$, summation by parts and the identity $\Phi(m,k)-\Phi(m,k+1)=\Phi(m,k+1)A/k$ yield
\begin{equation}\label{sec43:abel}
 G_n^{(\varepsilon)}(x_n)(t)
 =x_n(m/n)-\Phi(m,a+1)x_n(a/n)
 +\sum_{k=a+1}^{m-1}\frac{1}{k}\Phi(m,k+1)Ax_n(k/n).
\end{equation}
The kernels converge uniformly to their continuous counterparts by \eqref{sec43:kernel-error}. Their norms and the sums of the coefficient norms in \eqref{sec43:abel} are bounded by a constant depending only on $\varepsilon,T,A$. Replacing $x_n$ by $x$ therefore gives a uniform error at most $C_{\varepsilon,T}\|x_n-x\|_\infty$. The remaining sum is a Riemann sum for the integral in \eqref{sec43:limit-map}, uniformly in its upper endpoint, because $(t,s)\mapsto(t/s)^AAx(s)/s$ is continuous on $\varepsilon\le s\le t\le T$.

More explicitly, with $\omega_x(h)=\sup_{|u-v|\le h}\|x(u)-x(v)\|$ on $[0,T]$, these estimates give
\[
 \sup_{\varepsilon\le t\le T}\|G_n^{(\varepsilon)}(x_n)(t)-G^{(\varepsilon)}(x)(t)\|
 \le C_{\varepsilon,T}\left(\|x_n-x\|_\infty+
       \omega_x(2/n)+\frac{\|x\|_\infty}{n}\right).
\]
The initial interval $\varepsilon\le t<(a+1)/n$ satisfies the same bound, since the discrete map is zero there and $G^{(\varepsilon)}(x)(\varepsilon)=0$. The right-hand side tends to zero.
\end{proof}

\begin{proposition}
\label{prop:truncated}
Fix $0<\varepsilon<T$. For all sufficiently large $n$, put
\[
 Y_n^{(\varepsilon)}(t)=\frac{1}{\sqrt{n}}
 \sum_{k=\lfloor n\varepsilon\rfloor}^{\lfloor nt\rfloor-1}
 \Phi(\lfloor nt\rfloor,k+1)D_{k+1},\qquad\varepsilon\le t\le T,
\]
where an empty sum is zero. Then
\[
 Y_n^{(\varepsilon)}\Longrightarrow Z^{(\varepsilon)}
 \quad\text{in }D([\varepsilon,T],\R^2),\qquad
 Z_t^{(\varepsilon)}=\int_\varepsilon^t(t/s)^A dB_s.
\]
\end{proposition}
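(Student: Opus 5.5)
The plan is to write $Y_n^{(\varepsilon)}$ as the image of the rescaled martingale $M_n(t)=M_{[nt]}/\sqrt n$ under the discrete map $G_n^{(\varepsilon)}$, and then pass to the limit with \Cref{prop:martingale-fclt} and \Cref{lem:truncated-transform-approx}.

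\textbf{Step 1: the identity.} With $m=\lfloor nt\rfloor$ and $a=\lfloor n\varepsilon\rfloor$, we have for $k\ge a\ge1$
\[
M_n((k+1)/n)-M_n(k/n)=\frac{M_{k+1}-M_k}{\sqrt n}=\frac{D_{k+1}}{\sqrt n},
\]
since $M_{k+1}-M_k=D_{k+1}$ for $k\ge1$. Substituting this into the definition of $G_n^{(\varepsilon)}$ gives
\[
Y_n^{(\varepsilon)}(t)=G_n^{(\varepsilon)}(M_n|_{[0,T]})(t)
\]
exactly, for all $\varepsilon\le t\le T$. This includes the empty-sum case $m=a$, where both sides vanish.

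\textbf{Step 2: the limit.} By \Cref{prop:martingale-fclt}, $M_n\Rightarrow B$ in $D([0,T],\R^2)$, and $B$ is continuous. By Skorokhod's representation theorem we may assume $M_n\to B$ almost surely in $J_1$. \Cref{lem:truncated-transform-approx} then gives $G_n^{(\varepsilon)}(M_n)\to G^{(\varepsilon)}(B)$ uniformly on $[\varepsilon,T]$, almost surely. Uniform convergence implies $J_1$ convergence, so $Y_n^{(\varepsilon)}\Rightarrow G^{(\varepsilon)}(B)$ in $D([\varepsilon,T],\R^2)$. Equivalently, this is the extended continuous mapping theorem with the maps $G_n^{(\varepsilon)}$ converging along continuous limits.

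\textbf{Step 3: identifying the limit.} It remains to show that, almost surely for all $t\in[\varepsilon,T]$,
\[
G^{(\varepsilon)}(B)(t)=\int_\varepsilon^t(t/s)^A\,dB_s .
\]
For fixed $t$, the map $s\mapsto(t/s)^A$ is deterministic and $C^1$ on $[\varepsilon,t]$, with
\[
\frac{d}{ds}(t/s)^A=-(t/s)^A\frac{A}{s}.
\]
Itô's product rule (deterministic integration by parts) therefore gives
\[
\int_\varepsilon^t(t/s)^A\,dB_s=B_t-(t/\varepsilon)^AB_\varepsilon+\int_\varepsilon^t(t/s)^A\frac{AB_s}{s}\,ds,
\]
almost surely. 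This is exactly \eqref{sec43:limit-map} with $x=B$. The right-hand side is continuous in $t$, so it provides the continuous version of $Z^{(\varepsilon)}$, and the almost-sure identity holds simultaneously for all $t$.

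\textbf{Main obstacle.} The only delicate point is the mode of convergence. \Cref{lem:truncated-transform-approx} is stated for deterministic sequences $x_n\to x$ with continuous $x$, and $G_n^{(\varepsilon)}$ depends on $n$. This is why I go through Skorokhod representation rather than the plain continuous mapping theorem. One should also check that the exceptional set where the lemma fails is null. It is, because the lemma only requires that the limit be continuous, which holds almost surely for $B$.
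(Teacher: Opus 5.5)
Your proof is correct and follows essentially the same route as the paper. Like the paper, you write $Y_n^{(\varepsilon)}=G_n^{(\varepsilon)}(M_{\lfloor n\cdot\rfloor}/\sqrt n)$, combine \Cref{prop:martingale-fclt} with \Cref{lem:truncated-transform-approx}, and identify the limit by integration by parts using $\partial_s(t/s)^A=-(t/s)^AA/s$. The only difference is cosmetic: the paper cites the extended continuous mapping theorem directly, whereas you unpack it via Skorokhod's representation, which is equivalent here because $D([0,T],\R^2)$ is Polish and $B$ is continuous.
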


\begin{proof}
Let $\mathcal M_n(t)=M_{\lfloor nt\rfloor}/\sqrt{n}$. \Cref{prop:martingale-fclt} gives $\mathcal M_n\Longrightarrow B$, and $Y_n^{(\varepsilon)}=G_n^{(\varepsilon)}(\mathcal M_n)$. \Cref{lem:truncated-transform-approx} and the extended continuous mapping theorem \cite[Theorem~1.11.1]{wellner2023weak} imply convergence to $G^{(\varepsilon)}(B)$. Finally, $\partial_s(t/s)^A=-(t/s)^AA/s$, so integration by parts identifies \eqref{sec43:limit-map} with $\int_\varepsilon^t(t/s)^A dB_s$.
\end{proof}

\begin{lemma}
\label{lem:fundamental-maximal}
Let $(\xi_{k+1})_{k\ge1}$ be an $\R^2$-valued martingale-difference sequence with $\sup_k\E\|\xi_{k+1}\|^2<\infty$. For
\[
 U_m=\sum_{k=1}^{m-1}\Phi(m,k+1)\xi_{k+1},\qquad U_0=U_1=0,
\]
there is a constant $C$ such that $\E\max_{0\le m\le N}\|U_m\|^2\le CN$ for all $N\ge1$.
\end{lemma}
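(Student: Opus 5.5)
Since $\Phi(m,k+1)$ depends on $m$, the sequence $U_m$ is not itself a martingale, so Doob's inequality cannot be applied to it directly. The plan is to factor out the $m$-dependence and reduce to a maximal inequality for a genuine martingale.

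\textbf{Factorisation.} For $m\ge k+1$ we have $\Phi(m,k+1)=\Phi(m,1)\Phi(k+1,1)^{-1}$, valid whenever all the factors $I+A/j$ are invertible. To avoid degenerate factors (for instance $j=1$ with $\mu=1$ is excluded, but small $j$ may still cause trouble), fix $j_0$ so large that $I+A/j$ is invertible for $j\ge j_0$. Split
\[
U_m=\sum_{k<j_0}\Phi(m,k+1)\xi_{k+1}+\sum_{k=j_0}^{m-1}\Phi(m,j_0)\,\Phi(k+1,j_0)^{-1}\xi_{k+1}.
\]
The first sum has finitely many terms. By \Cref{lem:fundamental}(3), its norm is at most $C m^\theta\sum_{k<j_0}\|\xi_{k+1}\|$, so its maximal second moment is $O(N^{2\theta})=O(N)$, since $\theta<1/2$.

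\textbf{Martingale term and Doob's inequality.} Write the second sum as $\Phi(m,j_0)R_m$, where
\[
R_m:=\sum_{k=j_0}^{m-1}\Phi(k+1,j_0)^{-1}\xi_{k+1}
\]
is a martingale. The factor $\Phi(m,j_0)$ can grow like $m^\theta$, while $\Phi(k+1,j_0)^{-1}$ may also grow (as $k^{\theta}$ when $\mu>0$, through the negative eigenvalue mode). A crude bound $\|\Phi(m,j_0)\|\,\|R_m\|$ would therefore lose a factor $N^{2\theta}$. To fix this, diagonalise (or, when $\mu<0$, use the $P$-norm as in \Cref{lem:square-order}). When $\mu\ne0$, $A$ is diagonalisable over $\mathbb C$ with eigenvalues $\pm\lambda$, $\lambda=\sqrt\mu$. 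In eigencoordinates $\Phi(m,k+1)$ is diagonal with entries $\phi_\pm(m)/\phi_\pm(k+1)$, where $|\phi_\pm(m)|\asymp m^{\pm\operatorname{Re}\lambda}$. Each eigencoordinate of $U_m$ then becomes $\phi_\pm(m)R^\pm_m$, with scalar martingale $R^\pm_m=\sum_k \phi_\pm(k+1)^{-1}\xi^\pm_{k+1}$.

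\textbf{Dyadic blocks.} Control $\max_{m\le N}|\phi(m)R_m|^2$ over dyadic blocks $2^j<m\le 2^{j+1}$. On each block,
\[
\max|\phi(m)R_m|^2\le C2^{\pm 2j\operatorname{Re}\lambda}\max_{m\le 2^{j+1}}|R_m|^2.
\]
Doob's inequality together with orthogonality of increments gives
\[
\E\max_{m\le 2^{j+1}}|R_m|^2\le C\sum_{k\le 2^{j+1}}k^{\mp2\operatorname{Re}\lambda}.
\]
Since $2\operatorname{Re}\lambda<1$, this is $\asymp 2^{j(1\mp2\operatorname{Re}\lambda)}$, so the block contributes $O(2^j)$. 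Summing over $j\le\log_2 N$ gives $O(N)$.

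\textbf{Remaining cases.} The case $\mu<0$ is handled identically, since then $|\phi_\pm|$ is of constant order. The case $\mu=0$ with $A$ nilpotent, where $\Phi(m,k)=I+A\sum_{j=k}^{m-1}1/j$, is treated directly. The first coordinate of $U_m$ is a martingale plus $\log m\cdot R^{(2)}_m-\sum_k\big(\sum_{j\le k}1/j\big)\alpha\xi^{(2)}_{k+1}$, and the same dyadic Doob argument controls it, the log factors being harmless.

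The main obstacle is exactly this non-martingale structure combined with the possible growth of the kernel. The decisive point is that growth in $m$ is exactly compensated by decay of the variance of the normalised martingale increments, and the dyadic blocking makes this compensation rigorous.
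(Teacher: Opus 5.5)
Your argument for $\mu\neq0$ is correct and differs from the paper's. You diagonalise $A$, and for each mode the block growth $|\phi_\pm(m)|^2\asymp 2^{\pm2j\operatorname{Re}\lambda}$ is exactly offset by the Doob bound $\sum_{k\le 2^{j+1}}k^{\mp2\operatorname{Re}\lambda}$ for one global martingale $R^\pm$. The paper uses no diagonalisation. It combines the pointwise bound $\E\|U_m\|^2\le Cm$ (from \Cref{lem:fundamental}(3) with $\theta<1/2$) with a restart at each dyadic point $a=2^r$:
\[
U_m=\Phi(m,a)\Bigl\{U_a+\sum_{k=a}^{m-1}\Phi(k+1,a)^{-1}\xi_{k+1}\Bigr\},
\]
where $\Phi(m,a)^{\pm1}$ are bounded for $a\le m\le 2a$. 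This treats all signs of $\mu$ at once, including the degenerate one.

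The gap is the nilpotent case $\mu=0$, $A\neq0$ (e.g.\ $\alpha_2=0$). There your claim that the log factors are harmless is false for the decomposition you propose. Write $H_n:=\sum_{j\le n}1/j$ and $M_m:=\sum_{k<m}\xi_{k+1}$. Then $U_m=M_m+A\sum_{k<m}(H_{m-1}-H_k)\xi_{k+1}$, and you split the second sum into $H_{m-1}AM_m$ and the martingale $\sum_{k<m}H_kA\xi_{k+1}$. When the increments have variance bounded below, each piece has second moment of order $m\log^2 m$. This holds for $D^{(2)}_{k+1}$, whose conditional variance is $1$ when $\alpha_2=0$. Your dyadic bound therefore gives block contributions of order $j^2 2^j$, and hence only $\E\max_{m\le N}\|U_m\|^2\le CN\log^2 N$. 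The $O(N)$ bound comes only from cancellation between the two pieces: unlike the diagonal case, no single martingale has decaying increments that compensate the kernel growth. The loss matters downstream, since in \Cref{prop:origin} it would give $\varepsilon\log^2(n\varepsilon)$ instead of $C\varepsilon$. You must keep the difference together, for instance by summation by parts and Minkowski:
\[
 \sum_{k=1}^{m-1}(H_{m-1}-H_k)\xi_{k+1}=\sum_{j=2}^{m-1}\frac{M_j}{j},
 \qquad
 \Bigl(\E\max_{m\le N}\Bigl\|\sum_{j=2}^{m-1}\frac{M_j}{j}\Bigr\|^2\Bigr)^{1/2}
 \le\sum_{j=2}^{N-1}\frac{(\E\|M_j\|^2)^{1/2}}{j}\le C\sqrt N.
\]
Alternatively, recentre the harmonic sums at $a=2^r$ on each block, which is exactly the paper's restart.
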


\begin{proof}
Orthogonality of the martingale differences and \Cref{lem:fundamental}(3) give
\begin{equation}\label{sec43:pointwise-second}
 \E\|U_m\|^2\le Cm^{2\theta}
 \sum_{k=1}^{m-1}(k+1)^{-2\theta}\le C'm.
\end{equation}
Since $A^2=\mu I$ and $\mu<1/4$,
\[
 (I+A/j)^{-1}=\frac{I-A/j}{1-\mu/j^2},\qquad
 \|(I+A/j)^{-1}\|\le1+C/j.
\]
This bound holds for all three signs of $\mu$. Together with \Cref{lem:fundamental}(3), it implies
\begin{equation}\label{sec43:dyadic}
 \sup_{r\ge0}\sup_{2^r\le m\le2^{r+1}}
 \{\|\Phi(m,2^r)\|+\|\Phi(m,2^r)^{-1}\|\}<\infty,
\end{equation}
because the harmonic sum over each dyadic block is bounded. For $a=2^r$ and $a\le m\le2a$, factor
\[
 U_m=\Phi(m,a)\left\{U_a+
       \sum_{k=a}^{m-1}\Phi(k+1,a)^{-1}\xi_{k+1}\right\}.
\]
The sum in braces after $U_a$ is a martingale in $m$. Doob's $L^2$ inequality, \eqref{sec43:pointwise-second}, and \eqref{sec43:dyadic} give
\[
 \E\max_{a\le m\le2a}\|U_m\|^2
 \le C\E\|U_a\|^2+C\sum_{k=a}^{2a-1}\E\|\xi_{k+1}\|^2
 \le C'a.
\]
Summing these bounds over $a=1,2,4,\ldots,2^{\lfloor\log_2N\rfloor}$ proves the result.
\end{proof}

\begin{proposition}
\label{prop:origin}
Fix $T>0$, set $Y_n(t)=Y_{\lfloor nt\rfloor}/\sqrt{n}$, and extend $Y_n^{(\varepsilon)}$ by zero on $[0,\varepsilon)$. Then, for every $\eta>0$,
\[
 \lim_{\varepsilon\downarrow0}\limsup_{n\to\infty}
 \Prob\left(\sup_{0\le t\le T}\|Y_n(t)-Y_n^{(\varepsilon)}(t)\|>\eta\right)=0.
\]
\end{proposition}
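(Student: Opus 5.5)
We first write the difference explicitly. Put $a:=\lfloor n\varepsilon\rfloor$ and $m:=\lfloor nt\rfloor$. For $t<\varepsilon$ we have $Y_n^{(\varepsilon)}(t)=0$, so the difference is simply $Y_m/\sqrt n$ with $m\le a$. For $t\ge\varepsilon$ (so $m\ge a$), the factorisation $\Phi(m,k+1)=\Phi(m,a)\Phi(a,k+1)$ for $k+1\le a$ gives
\[
 Y_m-\sqrt n\,Y_n^{(\varepsilon)}(t)=\sum_{k=1}^{a-1}\Phi(m,k+1)D_{k+1}=\Phi(m,a)\,Y_a .
\]
Hence
\[
 \sup_{0\le t\le T}\|Y_n(t)-Y_n^{(\varepsilon)}(t)\|
 \le \frac{1}{\sqrt n}\max_{0\le m\le a}\|Y_m\|
 +\frac{1}{\sqrt n}\sup_{a\le m\le nT}\|\Phi(m,a)\|\,\|Y_a\|.
\]
So it suffices to bound two things: the maximal size of $Y$ on the initial segment up to $a$, and the operator norm of the propagator $\Phi(m,a)$.

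For the first term we apply \Cref{lem:fundamental-maximal} with $\xi_{k+1}=D_{k+1}$. These are martingale differences, bounded by $2\sqrt2$, and $U_m=Y_m$. This gives $\E\max_{m\le a}\|Y_m\|^2\le Ca\le Cn\varepsilon$. For the second term, \Cref{lem:fundamental}(3) gives
\[
 \|\Phi(m,a)\|\le C(m/a)^\theta\le C(T/\varepsilon)^\theta(1+o(1)),
\]
since $a\sim n\varepsilon$. Combined with $\E\|Y_a\|^2\le Ca$ (again from \Cref{lem:fundamental-maximal}, or from \eqref{sec43:pointwise-second}), the second term has second moment at most
\[
 C\,\frac{a}{n}\,(T/\varepsilon)^{2\theta}\le C' T^{2\theta}\varepsilon^{1-2\theta}.
\]

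By Chebyshev's inequality, the probability in question is then at most
\[
 C\eta^{-2}\bigl(\varepsilon+T^{2\theta}\varepsilon^{1-2\theta}\bigr)
\]
for all large $n$. This tends to $0$ as $\varepsilon\downarrow0$, and this is exactly where $\theta<1/2$ is used.

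The main point needing care is the second term. A naive bound on $\sum_{k<a}\Phi(m,k+1)D_{k+1}$ via Doob would be awkward, because the sum is not a martingale in $m$. The factorisation through $\Phi(m,a)$ removes this issue: all the $m$-dependence sits in a deterministic matrix, whose growth is controlled by \Cref{lem:fundamental}(3). One must also check the boundary cases, namely small $n$ where $a=0$ and the convention $Y_0=Y_1=0$, but these affect only finitely many $n$ and do not change the $\limsup$.
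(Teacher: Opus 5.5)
Your proposal is correct and follows the paper's proof essentially step for step. Both arguments use the exact identity $Y_n(t)-Y_n^{(\varepsilon)}(t)=n^{-1/2}\Phi(\lfloor nt\rfloor,a)Y_a$ for $t\ge\varepsilon$, the maximal bound of \Cref{lem:fundamental-maximal} on $[0,\varepsilon)$, the kernel estimate of \Cref{lem:fundamental}(3), and Markov's inequality applied to the resulting $C_T(\varepsilon+\varepsilon^{1-2\theta})$ second-moment bound.
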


\begin{proof}
Write $a=\lfloor n\varepsilon\rfloor\ge1$. The martingale differences $D_{k+1}$ are bounded, so \Cref{lem:fundamental-maximal} gives
$\E\max_{0\le m\le N}\|Y_m\|^2\le CN$.
For $t<\varepsilon$, the expected squared maximum of the error is therefore at most $Ca/n$. For $t\ge\varepsilon$, the multiplicative property of $\Phi$ gives the exact identity
\[
 Y_n(t)-Y_n^{(\varepsilon)}(t)
 =\frac{1}{\sqrt{n}}\Phi(\lfloor nt\rfloor,a)Y_a.
\]
Consequently, for $0<\varepsilon<\min\{1,T\}$ and sufficiently large $n$,
\begin{align}
  \E\sup_{0\le t\le T}
  \|Y_n(t)-Y_n^{(\varepsilon)}(t)\|^2
  &\le
  \frac{Ca}{n}
  +
  \frac{C}{n}
  \left(\frac{nT}{a}\right)^{2\theta}
  \E\|Y_a\|^2
  \notag\\
  &\le
  \frac{Ca}{n}
  +
  CT^{2\theta}n^{2\theta-1}a^{1-2\theta}
  \notag\\
  &\le
  C_T\{\varepsilon+\varepsilon^{1-2\theta}\}.
  \label{sec43:origin-bound}
\end{align}
Here we used $\E\|Y_a\|^2\le Ca$ and
$a=\lfloor n\varepsilon\rfloor\asymp n\varepsilon$. Since
$\theta<1/2$, both terms on the right-hand side of
\eqref{sec43:origin-bound} tend to zero as $\varepsilon\downarrow0$.
Markov's inequality proves the assertion.
\end{proof}

\begin{proposition}
\label{prop:discrete_sum_convergence_Z}
For every $T>0$, $Y_n\Longrightarrow Z$ in $D([0,T],\R^2)$.
\end{proposition}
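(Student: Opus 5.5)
This is the standard approximation argument: the truncated processes converge, the truncation error is uniformly small in probability, and the truncated limits approach $Z$. The tool is the approximation theorem of Billingsley, Theorem 3.2 in \emph{Convergence of Probability Measures}. Fix $T>0$, and for $0<\varepsilon<T$ extend $Y_n^{(\varepsilon)}$ and $Z^{(\varepsilon)}$ by zero on $[0,\varepsilon)$.

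\textbf{Step 1: the truncated processes converge.} \Cref{prop:truncated} gives $Y_n^{(\varepsilon)}\Longrightarrow Z^{(\varepsilon)}$ in $D([\varepsilon,T],\R^2)$. We need this on $[0,T]$ after the zero extension. The extension map is not $J_1$-continuous at a jump located at $\varepsilon$, but every limit path here is continuous on $[\varepsilon,T]$. By the Skorokhod representation we may therefore assume uniform convergence on $[\varepsilon,T]$. Extending by zero keeps $\sup_{t\le T}$ distances unchanged, so the convergence transfers. The limit has at most one jump, at $\varepsilon$, which costs nothing in $J_1$ once the uniform convergence is in hand.

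\textbf{Step 2: the truncation error is uniformly small.} \Cref{prop:origin} gives, for every $\eta>0$,
\[
\lim_{\varepsilon\downarrow0}\limsup_{n\to\infty}\Prob\Bigl(\sup_{0\le t\le T}\|Y_n(t)-Y_n^{(\varepsilon)}(t)\|>\eta\Bigr)=0.
\]
This uniform bound dominates the Skorokhod distance.

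\textbf{Step 3: the truncated limits approach $Z$.} This is the main check. On $[0,\varepsilon)$ the difference is $Z$ itself. For $t\ge\varepsilon$,
\[
Z_t-Z^{(\varepsilon)}_t=\int_0^\varepsilon (t/s)^A\,dB_s=(t/\varepsilon)^AZ_\varepsilon .
\]
By \eqref{eq:matrix-exp-growth}, $\|(t/\varepsilon)^A\|\le C(T/\varepsilon)^\theta$, and $\E\|Z_\varepsilon\|^2=\varepsilon\tr\Sigma$ by \Cref{prop:covariance}. Hence
\[
\E\sup_{\varepsilon\le t\le T}\|Z_t-Z^{(\varepsilon)}_t\|^2\le C_T\,\varepsilon^{1-2\theta}\to0 .
\]
The sup over $[0,\varepsilon)$ tends to zero almost surely by the continuity of $Z$ from \Cref{prop:well-defined} and $Z_0=0$. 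So $Z^{(\varepsilon)}\to Z$ uniformly in probability, and hence in distribution in $D([0,T],\R^2)$.

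Billingsley's Theorem 3.2 then yields $Y_n\Longrightarrow Z$. The main obstacle is the bookkeeping in Step 1, namely justifying the passage from $D([\varepsilon,T])$ to $D([0,T])$ under zero extension. If that proves awkward, apply Billingsley's theorem to $\varepsilon$-shifted processes instead, or define $Y_n^{(\varepsilon)}$ with the continuous extension $Z^{(\varepsilon)}$ and verify Step 2 for it.
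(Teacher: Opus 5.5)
Your proof is correct and is essentially the paper's proof. It uses the same ingredients: truncated convergence from \Cref{prop:truncated} after zero extension, the negligibility estimate of \Cref{prop:origin}, the bound $\E\sup_{\varepsilon\le t\le T}\|(t/\varepsilon)^AZ_\varepsilon\|^2\le C_T\varepsilon^{1-2\theta}$ together with continuity of $Z$ at $0$, and Billingsley's approximation theorem; your citation of Theorem~3.2, the double-limit version, is in fact the more precise one. The only difference is Step~1. There the paper just observes that $Z^{(\varepsilon)}_\varepsilon=0$ (and likewise $Y_n^{(\varepsilon)}(\varepsilon)=0$), so no jump arises at $\varepsilon$. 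Moreover, the zero-extension map is $1$-Lipschitz for the $J_1$ metrics, since a time change fixing $\varepsilon$ extends by the identity on $[0,\varepsilon]$. So your claim that it fails to be continuous is mistaken, though harmless, and the Skorokhod-representation detour is unnecessary.
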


\begin{proof}
Extend $Z^{(\varepsilon)}$ by zero on $[0,\varepsilon)$. It is continuous at $\varepsilon$, so \Cref{prop:truncated} also gives $Y_n^{(\varepsilon)}\Longrightarrow Z^{(\varepsilon)}$ on $[0,T]$. For $t\ge\varepsilon$,
\[
 Z_t-Z_t^{(\varepsilon)}=(t/\varepsilon)^AZ_\varepsilon.
\]
The kernel bound and $\E\|Z_\varepsilon\|^2=\varepsilon\operatorname{tr}\Sigma$ yield
\begin{equation}\label{sec43:limit-origin}
 \E\sup_{\varepsilon\le t\le T}\|Z_t-Z_t^{(\varepsilon)}\|^2
 \le C_T\varepsilon^{1-2\theta}\longrightarrow0.
\end{equation}
On $[0,\varepsilon)$ the error tends to zero almost surely by continuity of $Z$ at zero (\Cref{prop:well-defined}). Thus $Z^{(\varepsilon)}\to Z$ uniformly in probability. This, \Cref{prop:origin}, and the converging-together theorem \cite[Theorem~3.1]{billingsley1999convergence} prove the result. The uniform error bounds apply to the $J_1$ topology because its metric can be bounded above by the uniform metric.
\end{proof}

\begin{proof}[Proof of \Cref{thm:main}]
For $t\ge1/n$, \eqref{sec43:decomposition} gives
$S_{\lfloor nt\rfloor}/\sqrt{n}=\Phi(\lfloor nt\rfloor,1)S_1/\sqrt{n}+Y_n(t)$;
both $S_{\lfloor nt\rfloor}$ and $Y_n(t)$ vanish for $t<1/n$. Hence, for every fixed $T>0$,
\[
 \sup_{0\le t\le T}\left\|\frac{S_{\lfloor nt\rfloor}}{\sqrt{n}}-Y_n(t)\right\|
 \le C_T n^{\theta-1/2}\|S_1\|\longrightarrow0
 \quad\text{a.s.}
\]
\Cref{prop:discrete_sum_convergence_Z} and the asymptotic-equivalence theorem \cite[Theorem~3.1]{billingsley1999convergence} give the claimed convergence on $[0,T]$. Since $T$ is arbitrary and the limit is continuous, the convergence holds in $D([0,\infty),\R^2)$.
\end{proof}

\section{Difference process}
\label{sec:difference}

In this section, we study the difference between the two interacting elephant random walks.
Define
\begin{equation}
\label{eq:diff-def}
  \mathcal{D}_n:=S_n^{(1)}-S_n^{(2)},\qquad n\ge0.
\end{equation}
Writing $\dd:=(1,-1)^\top$, we have $\mathcal{D}_n=\dd^\top S_n$.

Throughout the remainder of this section, unless otherwise stated, we assume either
\begin{equation*}
    \mu<1/4\qquad\text{or}\qquad \mu\ge 1/4\quad \text{with}\quad \alpha_1=\alpha_2\in[1/2,1).
\end{equation*}

\subsection{Functional limit}

The following theorem shows that, in the symmetric case
$\alpha_1=\alpha_2\in[1/2,1)$, the same $\sqrt n$-scaling still yields
a nontrivial functional limit, despite the fact that the joint process is critical or superdiffusive. 

\begin{theorem}
\label{thm:diff-fclt}
We have the distributional convergence in $D([0,\infty),\R)$,
\begin{equation}
\label{eq:diff-fclt}
  \Bigl\{\frac{\mathcal{D}_{[nt]}}{\sqrt{n}}\Bigr\}_{t\ge0}
  \ \Longrightarrow\
  \{V_t\}_{t\ge0}
  \qquad\text{as }n\to\infty,
\end{equation}
where $\{V_t\}_{t\ge0}$ is a centered continuous real-valued Gaussian process with $V_0=0$, represented as follows.

\begin{enumerate}
\item If $\mu<1/4$, then
\begin{equation}
\label{eq:diff-limit}
  V_t=\dd^\top Z_t
  =\int_0^t \dd^\top(t/s)^A\,dB_s,\qquad t>0,
\end{equation}
where $\{B_t\}_{t\ge 0}$ is a two-dimensional standard Brownian motion.

\item If $\alpha_1=\alpha_2\in[1/2,1)$, then 
\begin{equation} 
\label{eq:diff-limit-symmetric} 
V_t = \sqrt{2}\int_0^t \left(\frac{t}{s}\right)^{-\alpha_1}\,dW_s, \qquad t>0, 
\end{equation} 
where $\{W_t\}_{t\ge0}$ is a standard Brownian motion.
\end{enumerate}
\end{theorem}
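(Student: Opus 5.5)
Case (1) follows directly from \Cref{thm:main}. The map $x\mapsto \dd^\top x$ from $D([0,\infty),\R^2)$ to $D([0,\infty),\R)$ is continuous in the $J_1$ topology; it is linear and Lipschitz for the uniform metric on compacts, and it preserves time changes. The continuous mapping theorem therefore gives $\mathcal D_{[nt]}/\sqrt n=\dd^\top S_{[nt]}/\sqrt n\Longrightarrow \dd^\top Z_t$. The representation \eqref{eq:diff-limit} is just $\dd^\top$ applied inside the Wiener integral \eqref{eq:gaussian-conti}, and continuity of $V$ is inherited from \Cref{prop:well-defined}.

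For case (2), with $\alpha_1=\alpha_2=\alpha\in[1/2,1)$, the joint process is no longer diffusive, so I would work directly with the one-dimensional difference. Since $\dd^\top A=-\alpha\,\dd^\top$, applying $\dd^\top$ to the decomposition \eqref{eq:ERWs-martingale} gives the scalar recursion
\[
\mathcal D_{k+1}=\Bigl(1-\frac{\alpha}{k}\Bigr)\mathcal D_k+d_{k+1},\qquad d_{k+1}:=\dd^\top D_{k+1}.
\]
Here $\{d_{k+1}\}$ is a bounded martingale-difference sequence. By the computation in \Cref{prop:martingale-fclt}, which does not use $\mu<1/4$, we have $\langle M^{(1)},M^{(2)}\rangle\equiv 0$, and therefore
\[
\E[d_{k+1}^2\mid\F_k]=2-\frac{\alpha^2}{k^2}\bigl((S_k^{(1)})^2+(S_k^{(2)})^2\bigr).
\]
I would then redo the scalar analogue of \Cref{subsec:aux} with the kernel $\phi(m,\ell)=\prod_{j=\ell}^{m-1}(1-\alpha/j)\approx(m/\ell)^{-\alpha}$. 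This kernel is bounded by $1$ once $j>\alpha$, so the analogues of \Cref{lem:fundamental}, \Cref{lem:truncated-transform-approx}, \Cref{lem:fundamental-maximal} and \Cref{prop:origin} go through, and more easily than before, since the exponent $-\alpha<0$ makes $\theta=0$ admissible. The first few indices $j\le\alpha$ do not matter because $\alpha<1$, so $1-\alpha/j\ge 0$ already for $j\ge1$.

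The main obstacle is the martingale FCLT for $m_n:=\sum_{k<n}d_{k+1}$, namely showing $m_{[nt]}/\sqrt n\Longrightarrow\sqrt2\,W$. By the formula above, this requires
\[
\frac1n\sum_{k\le nT}\frac{(S_k^{(i)})^2}{k^2}\xrightarrow{\Prob}0 .
\]
\Cref{lem:square-order} is not available here. Instead I would use the sum $\Sigma_n:=S_n^{(1)}+S_n^{(2)}$, which in the symmetric case satisfies $\E[\Sigma_{n+1}\mid\F_n]=(1+\alpha/n)\Sigma_n$. The standard one-dimensional ERW second-moment recursion then gives
\[
\E\Sigma_n^2=O(n)\ (\alpha<1/2),\qquad O(n\log n)\ (\alpha=1/2),\qquad O(n^{2\alpha})\ (\alpha>1/2),
\]
and the bound $\E\mathcal D_n^2=O(n)$ follows from the contracting recursion. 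Hence $\E (S_k^{(i)})^2/k^2=O(k^{2\alpha-2}+k^{-1}\log k)$. Since $2\alpha-2<0$, summing up to $nT$ gives $o(n)$, which is exactly where $\alpha<1$ is used. Markov's inequality then gives uniform convergence of $\langle m\rangle_{[n\cdot]}/n$ to $2t$, and Whitt's theorem applies as before.

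Finally, with $\mathcal M_n\Longrightarrow\sqrt2W$, the scalar analogue of the continuous mapping argument in \Cref{prop:truncated}, combined with the origin control, yields
\[
\frac{\mathcal D_{[nt]}}{\sqrt n}\Longrightarrow\sqrt2\int_0^t (t/s)^{-\alpha}\,dW_s .
\]
The initial term $\phi(m,1)\mathcal D_1/\sqrt n$ is $O(n^{-1/2})$ and vanishes. Continuity of $V$ follows by the Kolmogorov argument of \Cref{prop:well-defined}, and this gives \eqref{eq:diff-limit-symmetric}.
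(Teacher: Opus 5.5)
Your proposal is correct and matches the paper's proof essentially step by step. Case (1) is the continuous-mapping argument from \Cref{thm:main}. Case (2) is the scalar reduction via $\dd^\top A=-\alpha\,\dd^\top$ with kernel $\phi(m,\ell)=\prod_{j=\ell}^{m-1}(1-\alpha/j)$, followed by a scalar martingale FCLT whose bracket is controlled through the second moments of $S^{(1)}\pm S^{(2)}$ (this is where $\alpha<1$ is used), and then the scalar analogue of the truncation and maximal-inequality argument of \Cref{sec:proof}. One small precision: state positivity as $1-\alpha/j\ge 1-\alpha>0$, strictly, since the dyadic maximal bound needs $\phi(m,2^r)^{-1}$ to be bounded.
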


\begin{corollary}
\label{cor:diff-volterra}
    The limiting process satisfies the following stochastic equations.
\begin{enumerate}
\item If $\mu<1/4$, then
    \begin{equation}
\label{eq:diff-volterra}
    V_t=
\dd^\top B_t
+
\int_0^t
\dd^\top A Z_s\,\frac{ds}{s},
\qquad t\ge0,
\end{equation}
where $\{\dd^\top B_t\}_{t\ge 0}$ is a one-dimensional Brownian motion with variance parameter $2$.

\item If $\alpha_1=\alpha_2\in[1/2,1)$, then
\begin{equation}
\label{eq:diff-volterra-symmetric}
V_t
=
\sqrt{2}\,W_t
-
\alpha_1\int_0^t\frac{V_s}{s}\,ds,
\qquad t\ge0,
\end{equation}
where $\{W_t\}_{t\ge0}$ is a standard Brownian motion.
\end{enumerate}
\end{corollary}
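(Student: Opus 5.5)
Both parts follow from representations already obtained, by projecting onto $\dd$ or by a one-dimensional version of the argument in \Cref{prop:well-defined}.

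For part (1), I take the stochastic integral equation \eqref{eq:limiting_process_volterra} of \Cref{prop:well-defined} and multiply it on the left by $\dd^\top$. By \eqref{eq:diff-limit}, $V_t=\dd^\top Z_t$, which gives \eqref{eq:diff-volterra} at once. The time integral is well defined because, as in that proof, $\E\int_0^t\|Z_s\|s^{-1}\,ds<\infty$. It remains to identify the driving noise. The process $\dd^\top B_t=B^{(1)}_t-B^{(2)}_t$ is a continuous Gaussian martingale with quadratic variation $\dd^\top\dd\,t=2t$, so by L\'evy's characterization it is a Brownian motion with variance parameter $2$.

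For part (2), I set $\beta:=-\alpha_1$ and $V_t=\sqrt2\int_0^t(t/s)^{\beta}\,dW_s$. I then repeat the argument of \Cref{prop:well-defined} in one dimension. For $s<r$ one has
\[
(t/s)^\beta-(\varepsilon/s)^\beta=\int_\varepsilon^t \frac{\beta}{r}(r/s)^\beta\,dr
\qquad (s<\varepsilon),
\]
and for $\varepsilon\le s<t$,
\[
(t/s)^\beta-1=\int_s^t\frac{\beta}{r}(r/s)^\beta\,dr .
\]
Applying the stochastic Fubini theorem with these identities gives
\[
V_t-V_\varepsilon=\sqrt2(W_t-W_\varepsilon)+\beta\int_\varepsilon^t \frac{V_r}{r}\,dr .
\]

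The integrability condition for stochastic Fubini is
\[
\int_0^t\Bigl(\int_0^r \frac{\beta^2}{r^2}(r/s)^{2\beta}\,ds\Bigr)^{1/2}dr=C\int_0^t r^{-1/2}\,dr<\infty,
\]
and this is the only step needing care. Since $\beta<0$ the inner integral is finite; more generally it is finite whenever $2\beta<1$. The same computation shows $\E V_r^2=2r/(1-2\beta)$. Hence $\E\int_0^t|V_r|r^{-1}\,dr<\infty$, so $\int_0^t|V_r|r^{-1}\,dr<\infty$ almost surely. Continuity of $V$ at $0$ holds either by \Cref{thm:diff-fclt} or by the same Kolmogorov argument as before. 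Letting $\varepsilon\downarrow0$ then gives \eqref{eq:diff-volterra-symmetric}, since $\beta=-\alpha_1$.
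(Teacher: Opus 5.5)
Your proof is correct. Part (1) is the paper's argument: apply $\dd^\top$ to \eqref{eq:limiting_process_volterra}. Your L\'evy-characterization remark, identifying $\dd^\top B$ as a Brownian motion with variance parameter $2$, only makes explicit what the paper leaves implicit.

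For part (2) you reach the identity on $[\varepsilon,t]$ by a different route. You redo the stochastic Fubini argument of \Cref{prop:well-defined} in one dimension. The paper instead uses the factorization $(t/s)^{-\alpha_1}=t^{-\alpha_1}s^{\alpha_1}$ and writes $V_t=\sqrt2\,t^{-\alpha_1}\int_0^t s^{\alpha_1}\,dW_s$. It then applies integration by parts on $[\varepsilon,t]$, i.e.\ the It\^o product rule for a deterministic $C^1$ function times the martingale $\int_0^t s^{\alpha_1}\,dW_s$. This gives $dV_t=\sqrt2\,dW_t-\alpha_1 t^{-1}V_t\,dt$ directly.

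The paper's route is shorter and needs no Fubini integrability check. Your route does not use the separability of the kernel, so it would carry over verbatim to non-separable Volterra kernels. The price is verifying the condition $\int_0^t(\int_0^r \beta^2 r^{-2}(r/s)^{2\beta}\,ds)^{1/2}dr<\infty$ with $\beta=-\alpha_1$, which you computed correctly.

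The final passage $\varepsilon\downarrow0$ is the same in both proofs. It uses $\E V_r^2=2r/(1+2\alpha_1)$ to make $\int_0^t |V_r|r^{-1}\,dr$ almost surely finite, together with continuity of $V$ and $W$ at $0$.
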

\begin{proof}
If $\mu<1/4$, \eqref{eq:diff-volterra} follows immediately from
\eqref{eq:limiting_process_volterra} and \eqref{eq:diff-limit}.

Suppose that $\alpha_1=\alpha_2\in[1/2,1)$. By
\eqref{eq:diff-limit-symmetric},
\[
  V_t
  =
  \sqrt{2}\,t^{-\alpha_1}
  \int_0^t s^{\alpha_1}\,dW_s.
\]
Integration by parts on $[\varepsilon,t]$ gives
\[
  V_t-V_\varepsilon
  =
  \sqrt{2}\,(W_t-W_\varepsilon)
  -
  \alpha_1\int_\varepsilon^t\frac{V_s}{s}\,ds.
\]
Moreover, It\^o's isometry yields
$
  \E[V_s^2]=\frac{2s}{1+2\alpha_1},
$
and hence
\[
  \E\left[\int_0^t\frac{|V_s|}{s}\,ds\right]
  \le C\int_0^t s^{-1/2}\,ds<\infty.
\]
Thus the integral is almost surely finite, and letting
$\varepsilon\downarrow0$ proves \eqref{eq:diff-volterra-symmetric}.
\end{proof}

For $\mu<1/4$, define
\begin{equation}
\label{eq:key_parameter_diff_covariance}
\lambda
:=
\frac{\nu(1-\nu)}{1-4\mu},
\qquad
\nu:=\alpha_1+\alpha_2.
\end{equation}

\begin{proposition}
\label{prop:diff-cov}
The covariance kernel of $\{V_t\}_{t\ge0}$ is given as follows.
\begin{enumerate}
\item
If $\mu<1/4$, then
\[
\E\bigl[V_sV_t\bigr]
=
2r\,
c_\mu\!\left(
\log\frac{q}{r}
\right)
-
\lambda\,C_\mu(s,t),\qquad C_\mu(s,t)
:=
r
\left(
2c_\mu\!\left(
\log\frac{q}{r}
\right)
+
s_\mu\!\left(
\log\frac{q}{r}
\right)
\right),
\]
where $r:=\min\{s,t\}$, $q:=\max\{s,t\}$, and
$c_\mu$ and $s_\mu$ are defined in~\eqref{function:convenient_e-matrix}.

\item
If $\alpha_1=\alpha_2\in(-1/2,1)$, then
\begin{equation}
\label{eq:diff-cov-symmetric}
\E[V_sV_t]
=
\frac{2s}{1+2\alpha_1}
\left(\frac{t}{s}\right)^{-\alpha_1},
\qquad 0<s\le t.
\end{equation}
\end{enumerate}

In particular,
\begin{equation}
\label{eq:diff-variance}
\E[V_t^2]
=
\begin{cases}
\displaystyle
\frac{2t\bigl\{1-\alpha_1-\alpha_2+(\alpha_1-\alpha_2)^2\bigr\}}
     {1-4\alpha_1\alpha_2},
& \mu<1/4,\\[3mm]
\displaystyle
\frac{2t}{1+2\alpha_1},
& \alpha_1=\alpha_2\in[1/2,1).
\end{cases}
\end{equation}
\end{proposition}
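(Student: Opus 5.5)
For the case $\mu<1/4$, the plan is to start from \Cref{prop:covariance}. Since $V_t=\dd^\top Z_t$, we have, for $0<r\le q$,
\[
\E[V_rV_q]=\dd^\top\E[Z_rZ_q^\top]\dd=r\,\dd^\top\Sigma\,(q/r)^{A^\top}\dd .
\]
By \eqref{eq:e-matrix-function}, $(q/r)^{A^\top}=c_\mu(x)I+s_\mu(x)A^\top$ with $x=\log(q/r)$. The claim then reduces to the two scalar identities
\[
\dd^\top\Sigma\dd=2-2\lambda,\qquad \dd^\top\Sigma A^\top\dd=-\lambda .
\]
Here $A^\top\dd=(-\alpha_2,\alpha_1)^\top$.

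The first identity is a direct computation from \eqref{eq:Sigma}. The numerator is
\[
(1+2\alpha_1^2-2\mu)+(1+2\alpha_2^2-2\mu)-2\nu=2-4\mu+2(\alpha_1^2+\alpha_2^2)-2\nu .
\]
Since $\alpha_1^2+\alpha_2^2=\nu^2-2\mu$, this equals $2(1-4\mu)-2\nu(1-\nu)$. Dividing by $1-4\mu$ gives $2-2\lambda$.

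For the second identity, compute $\Sigma A^\top\dd$ and pair it with $\dd$. Up to the factor $1/(1-4\mu)$, the first component of $\Sigma A^\top\dd$ is
\[
-\alpha_2(1+2\alpha_1^2-2\mu)+\alpha_1\nu .
\]
The second component is
\[
-\alpha_2\nu+\alpha_1(1+2\alpha_2^2-2\mu).
\]
Subtracting the second from the first gives $-\nu-2\mu\nu+2\nu\mu+\nu^2=-\nu(1-\nu)$, hence $\dd^\top\Sigma A^\top\dd=-\lambda$. The result is
\[
\E[V_rV_q]=r\bigl[(2-2\lambda)c_\mu(x)-\lambda s_\mu(x)\bigr]=2r\,c_\mu(x)-\lambda C_\mu(r,q),
\]
which is symmetric in $s,t$ through $r,q$. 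Setting $s=t$ gives $x=0$, so $\E[V_t^2]=t(2-2\lambda)$. Expanding $(1-4\mu)-\nu(1-\nu)$ gives $1-\nu+(\alpha_1-\alpha_2)^2$, which is \eqref{eq:diff-variance}.

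For the symmetric case, use \eqref{eq:diff-limit-symmetric} when $\alpha_1\in[1/2,1)$. Writing $V_t=\sqrt2\,t^{-\alpha_1}\int_0^t u^{\alpha_1}dW_u$ and applying Itô's isometry gives, for $s\le t$,
\[
\E[V_sV_t]=2s^{-\alpha_1}t^{-\alpha_1}\frac{s^{1+2\alpha_1}}{1+2\alpha_1},
\]
which is \eqref{eq:diff-cov-symmetric}.

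For $\alpha_1\in(-1/2,1/2)$ we have $\mu=\alpha_1^2<1/4$, so case (1) applies. One could check that the formula specializes correctly, but it is cleaner to argue directly. Let $P_1$ and $W=P_1^\top B$ be as in \Cref{rem:explicit-Z}(iii). Then $\dd^\top=\sqrt2\,e_2^\top P_1^\top$ and $(t/s)^A$ has eigenvalue $(t/s)^{-\alpha}$ on $\dd$, so $V_t=\sqrt2\int_0^t(t/s)^{-\alpha}dW^{(2)}_s$. The same isometry computation applies, including the cases $\alpha\le0$ and $\alpha=0$. Alternatively, with $\nu=2\alpha$ one gets $\lambda=2\alpha/(1+2\alpha)$ and $c_\mu-s_\mu$ terms combining into $e^{-\alpha x}$.

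The main obstacle is only the algebra for $\dd^\top\Sigma A^\top\dd$, which I would double-check by verifying the symmetric case against \eqref{eq:diff-cov-symmetric}.
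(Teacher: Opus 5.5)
Your proposal is correct and follows the paper's proof. Part (1) reduces, as in the paper, to $\dd^\top\Sigma\dd=2(1-\lambda)$ and $\dd^\top\Sigma A^\top\dd=-\lambda$; your algebra for both checks out. The case $\alpha_1=\alpha_2\in[1/2,1)$ is the same It\^o-isometry computation.

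The only variation is for $\alpha_1=\alpha_2=\alpha\in(-1/2,1/2)$. The paper specializes the $c_\mu,s_\mu$ formula with $\lambda=2\alpha/(1+2\alpha)$ and checks $\alpha=0$ separately. You instead use $\dd^\top A=-\alpha\dd^\top$ to write $V_t=\sqrt2\int_0^t(t/s)^{-\alpha}\,dW^{(2)}_s$ directly, which covers all such $\alpha$ at once. This only needs $W=P_1^\top B$ to be a standard Brownian motion, not the hypothesis $\mu>0$ of the remark you cite.
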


\begin{proof}
Suppose first that $\mu<1/4$.
Since $V_t=\dd^\top Z_t$, \Cref{prop:covariance} gives
\[
\E[V_sV_t]
=
r\,\dd^\top\Sigma
(q/r)^{A^\top}\dd,
\]
where $r=\min\{s,t\}$ and $q=\max\{s,t\}$.
By~\eqref{eq:e-matrix-function},
\[
e^{xA^\top}
=
c_\mu(x)I+s_\mu(x)A^\top.
\]
Moreover, a direct calculation yields
$
\dd^\top\Sigma \dd=2(1-\lambda)
$
and
$
\dd^\top\Sigma A^\top \dd=-\lambda.
$
Substituting these identities into the above representation yields the first assertion.

Suppose now that
$\alpha_1=\alpha_2=\alpha\in(-1/2,1/2)$.
If $\alpha\ne0$, then
\[
  \mu=\alpha^2,
  \qquad
  \nu=2\alpha,
  \qquad
  \lambda
  =
  \frac{\nu(1-\nu)}{1-4\mu}
  =
  \frac{2\alpha}{1+2\alpha}.
\]
For $0<s\le t$, set $h:=\log(t/s)$. By the first assertion,
\begin{align*}
  \E[V_sV_t]
   =
  \frac{2s}{1+2\alpha}e^{-\alpha h}
  =
  \frac{2s}{1+2\alpha}
  \left(\frac{t}{s}\right)^{-\alpha}.
\end{align*}
For $\alpha=0$, both sides are equal to $2s$.
Thus, \eqref{eq:diff-cov-symmetric} holds for
$\alpha\in(-1/2,1/2)$.

For $\alpha_1=\alpha_2\in[1/2,1)$,
\eqref{eq:diff-limit-symmetric} and It\^o's isometry yield
\eqref{eq:diff-cov-symmetric}.
Hence, \eqref{eq:diff-cov-symmetric} holds for every
$\alpha_1=\alpha_2\in(-1/2,1)$.

Finally, when $\mu<1/4$, taking $s=t$ and using
$c_\mu(0)=1$ and $s_\mu(0)=0$ gives
\[
\E[V_t^2]
=
2t(1-\lambda)
=
\frac{2t\bigl\{1-\alpha_1-\alpha_2+(\alpha_1-\alpha_2)^2\bigr\}}
     {1-4\alpha_1\alpha_2}.
\]
Together with~\eqref{eq:diff-cov-symmetric}, this proves
\eqref{eq:diff-variance}.
\end{proof}

\begin{remark}
(i) When $\alpha_2=0$, that is when the second walk is a simple symmetric random walk except for its initial step, $\E\bigl[V_sV_t\bigr]
  =2\bigl(1-\alpha_1(1-\alpha_1)\bigr)s-\alpha_1(1-\alpha_1)\,s\log(t/s)$ for $0<s\le t$.
Furthermore:
\begin{enumerate}
\item If $\alpha_1\in\{0,1\}$, then
$\{V_t\}_{t\ge 0}$ has the law of
$\{\sqrt2\,B_t\}_{t\ge 0}$ for a standard Brownian motion $\{B_t\}_{t\ge 0}$.

\item If $0\le\alpha_1\le1$ (equivalently $1/2\le p_1\le1$), as a function of $\alpha_1$,
$\E[V_sV_t]$
is symmetric about $\alpha_1=1/2$
(equivalently $p_1=3/4$).
\end{enumerate}

(ii) In the symmetric case $\alpha_1=\alpha_2\in[1/2,1)$,
the limiting difference process is, up to the deterministic factor $\sqrt2$, a one-dimensional noise-counterbalanced Brownian motion in the sense of Bertenghi and Rosales-Ortiz~\cite{bertenghi2022joint}, corresponding here
to the parameter $\alpha_1$.
\end{remark}

Since the limiting difference process is a centered continuous self-similar Gaussian process,
we apply the result of
Shibata--Shirai~\cite{shibata2025remark}.

\begin{proposition}
\label{prop:diff-lil}
The limiting difference process $\{V_t\}_{t\ge0}$ satisfies the law of the iterated logarithm:
\[
  \limsup_{t\to\infty}
  \pm\frac{V_t}{\sqrt{2t\log\log t}}
  =
  \begin{cases}
  \displaystyle
  \left(
    \frac{
      2\bigl\{1-\alpha_1-\alpha_2+(\alpha_1-\alpha_2)^2\bigr\}
    }{
      1-4\alpha_1\alpha_2
    }
  \right)^{1/2},
  & \mu<1/4,\\[4mm]
  \displaystyle
  \left(\frac{2}{1+2\alpha_1}\right)^{1/2},
  & \alpha_1=\alpha_2\in[1/2,1),
  \end{cases}
  \quad\text{a.s.}
\]
\end{proposition}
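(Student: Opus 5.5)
The plan is to reduce the claim to a stationary-process LIL through the Lamperti transform, which is presumably what the Shibata--Shirai result packages. First, $V$ is a centered continuous Gaussian process with $V_0=0$, and it is $1/2$-self-similar. For $\mu<1/4$ this follows from the covariance formula in \Cref{prop:diff-cov}(1), which depends on $(s,t)$ only through $r$ and $q/r$ and is linear in $r$. In the symmetric case it follows from \eqref{eq:diff-cov-symmetric}. Set $X_u:=e^{-u/2}V_{e^u}$, $u\in\R$. Then $X$ is a stationary centered continuous Gaussian process with variance $\sigma^2:=\E[V_1^2]$, given by \eqref{eq:diff-variance}, and correlation
\[
\rho(h)=\frac{e^{-h/2}\,\E[V_1V_{e^h}]}{\sigma^2},\qquad h\ge0.
\]

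The key steps are to verify the two hypotheses under which a stationary Gaussian process satisfies $\limsup_{u\to\infty}X_u/\sqrt{2\log u}=\sigma$ a.s. (Pickands/Berman type). The first is a local Hölder-type condition $1-\rho(h)=O(h^{\beta})$ as $h\downarrow0$ for some $\beta>0$. This is immediate because $\rho$ is smooth in $h$ on $[0,\infty)$: by \Cref{prop:diff-cov} it is a combination of $e^{-h/2}c_\mu(h)$ and $e^{-h/2}s_\mu(h)$ in the first case, and equals $e^{-(1/2+\alpha_1)h}$ in the symmetric case. The second is a decay condition such as $\rho(h)\log h\to0$ as $h\to\infty$. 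Here we use $\mu<1/4$: $e^{-h/2}c_\mu(h)$ and $e^{-h/2}s_\mu(h)$ decay like $e^{-(1/2-\sqrt{\mu_+})h}$ times at most a polynomial factor in $h$ (the factor $h$ arises only when $\mu=0$). In the symmetric case the decay rate is $1/2+\alpha_1>0$. So the correlation decays exponentially. Under these conditions, the Shibata--Shirai result (equivalently, the classical LIL for self-similar Gaussian processes with exponentially mixing Lamperti transform) gives
\[
\limsup_{t\to\infty}\frac{V_t}{\sqrt{2t\log\log t}}=\sigma\quad\text{a.s.}
\]
We obtain this by substituting $t=e^u$, so that $\log u=\log\log t$. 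The $-V$ case follows by symmetry of the centered Gaussian law. Inserting the values of $\sigma^2$ from \eqref{eq:diff-variance} gives the displayed constants.

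The main obstacle is matching the hypotheses of the cited theorem exactly, in particular checking that $\sigma^2>0$ so the normalization is nondegenerate. In the symmetric case this is clear. For $\mu<1/4$ we need
\[
1-\alpha_1-\alpha_2+(\alpha_1-\alpha_2)^2>0.
\]
Since $|\alpha_i|\le1$, this can fail only near $\alpha_1=\alpha_2=1/2$, which is excluded by $\mu<1/4$. Concretely, assuming $\alpha_1\ge\alpha_2$ and writing $\alpha_1=a+d$, $\alpha_2=a-d$ with $d\ge0$, the expression equals $1-2a+4d^2$. This is positive when $a<1/2$. When $a\ge1/2$, the condition $\mu=a^2-d^2<1/4$ forces $4d^2>4a^2-1$, so the expression exceeds $4a^2-2a\ge0$.

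If the cited result is instead formulated for the self-similar process directly, the same two estimates serve as its assumptions in the equivalent form on $\E[V_sV_t]/\sqrt{st}$ as a function of $t/s$.
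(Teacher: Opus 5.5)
Your proposal is correct and matches the paper's proof. The paper verifies $1/2$-self-similarity from \Cref{prop:diff-cov} and checks that $h(x)=x^{-1/2}\E[V_1V_x]$ decays; in your Lamperti coordinates this function is $\sigma^2\rho(\log x)$, with the same rates $x^{-(1/2-\sqrt{\mu})}$, $x^{-1/2}\log x$, $x^{-1/2}$ and $x^{-(\alpha_1+1/2)}$. It then applies the Shibata--Shirai theorem and reads off the constant $\sqrt{\E[V_1^2]}$ from \eqref{eq:diff-variance}. Your explicit check that $\E[V_1^2]>0$ for $\mu<1/4$ fills in a nondegeneracy point the paper leaves implicit at this step.
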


\begin{proof}
By \Cref{prop:diff-cov}, $\{V_t\}$ is a $1/2$-self-similar process:
$\E[V_{cs}V_{ct}]=c\,\E[V_s V_t]$ for $c>0,\ s,t>0$.
Hence condition~(i) of~\cite[Theorem~1.2]{shibata2025remark} holds with $\rho=1/2$.

For the covariance decay condition, define
$
h(x):=x^{-1/2}\E[V_1V_x]
$
for $x\ge 1$.
By~\Cref{prop:diff-cov},
\[
h(x)
=
\begin{cases}
O\!\left(x^{-(1/2-\sqrt{\mu})}\right),
& 0<\mu<1/4,\\[1mm]
O\!\left(x^{-1/2}\log x\right),
& \mu=0,\\[1mm]
O\!\left(x^{-1/2}\right),
& \mu<0,\\[1mm]
O\!\left(x^{-(\alpha_1+1/2)}\right),
& \alpha_1=\alpha_2\in[1/2,1),
\end{cases}
\qquad \text{as $x\to\infty$}.
\]
In particular, in all cases, $h(x)=O((\log x)^{-\eta})$ for every $\eta>0$, so condition~(ii)
of~\cite[Theorem~1.2]{shibata2025remark} holds.
Therefore
$\limsup_{t\to\infty}\pm V_t/\sqrt{2t\log\log t}=\sqrt{\E[V_1^2]}$ a.s.,
and setting $t=1$ in~\eqref{eq:diff-variance} completes the proof.
\end{proof}

\begin{proposition}
\label{prop:diff-local-lil}
For every fixed deterministic $t>0$, the limiting difference process $\{V_t\}_{t\ge0}$ satisfies the local law of the iterated logarithm:
\[
\limsup_{h\downarrow0}
\pm
\frac{V_{t+h}-V_t}
{\sqrt{4h\log\log(1/h)}}
=
1
\qquad\text{a.s.}
\]
\end{proposition}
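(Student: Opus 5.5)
The plan is to split the increment into a Brownian part plus a smooth (locally Lipschitz) drift, and then apply Khinchin's local law of the iterated logarithm to the Brownian part. The constant $4h = 2h\cdot 2$ matches the variance parameter $2$ of the driving Brownian motion.

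First, use \Cref{cor:diff-volterra}. When $\mu<1/4$,
\[
V_{t+h}-V_t=\dd^\top(B_{t+h}-B_t)+\int_t^{t+h}\dd^\top A Z_s\,\frac{ds}{s}.
\]
In the symmetric case $\alpha_1=\alpha_2\in[1/2,1)$,
\[
V_{t+h}-V_t=\sqrt2\,(W_{t+h}-W_t)-\alpha_1\int_t^{t+h}\frac{V_s}{s}\,ds.
\]
In both cases the first term is $\sqrt2\,(\beta_{t+h}-\beta_t)$ for a standard Brownian motion $\beta$: take $\beta:=\dd^\top B/\sqrt2$ in the first case and $\beta:=W$ in the second.

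Second, control the drift. Fix $t>0$. By \Cref{prop:well-defined}, $Z$ is continuous, and therefore so is $V=\dd^\top Z$; in the symmetric case $V$ is continuous by \Cref{thm:diff-fclt}. Hence $\sup_{s\in[t,t+1]}\|Z_s\|/s$ (respectively $\sup_{s\in[t,t+1]}|V_s|/s$) is almost surely finite. It follows that the drift integral is bounded by $K_\omega h$ for $h\le1$, with $K_\omega<\infty$ almost surely. Since $h/\sqrt{4h\log\log(1/h)}\to0$, the drift does not contribute to the $\limsup$.

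Third, conclude with the Brownian local LIL at the fixed time $t$. The process $\beta_{t+\cdot}-\beta_t$ is a standard Brownian motion, so Khinchin's local law gives
\[
\limsup_{h\downarrow0}\pm\frac{\beta_{t+h}-\beta_t}{\sqrt{2h\log\log(1/h)}}=1\quad\text{a.s.}
\]
Multiplying by $\sqrt2$ produces exactly the normalization $\sqrt{4h\log\log(1/h)}$ with limit $1$, for both signs. Adding the negligible drift term completes the argument.

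The only delicate point is that the identities of \Cref{cor:diff-volterra} must hold almost surely simultaneously for all $t$ and $h$, so that the pathwise argument is legitimate. These identities are stated pathwise for continuous versions, with $B$ and $W$ being Brownian motions on the same probability space as $V$, so this is immediate. I expect no substantive obstacle beyond verifying that the driving process is indeed a standard Brownian motion after rescaling, which follows since $\dd^\top\dd=2$.
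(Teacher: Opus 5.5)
Your proposal is correct and follows essentially the same route as the paper. Both decompose $V$ via \Cref{cor:diff-volterra} into a Brownian motion with variance parameter $2$ plus a drift whose increment over $[t,t+h]$ is almost surely $O(h)$ by path continuity at the fixed time $t>0$, and then apply the classical local LIL to the Brownian part, where the drift is negligible at the $\sqrt{4h\log\log(1/h)}$ scale.
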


\begin{proof}
Suppose first that $\mu<1/4$.
By \Cref{cor:diff-volterra},
\[
V_t
=
\dd^\top B_t
+
R_t,
\qquad
R_t
:=
\int_0^t
\frac1s \dd^\top AZ_s\,ds.
\]
Fix $t>0$. Since $Z$ has continuous sample paths, the function $s\mapsto \frac1s \dd^\top AZ_s$
is almost surely bounded on a neighbourhood of $t$. Therefore, there exists an a.s. finite random constant $C$ such that, for all sufficiently small $h>0$,
\[
|R_{t+h}-R_t|
\le
\int_t^{t+h}
\left|
\frac1s \dd^\top AZ_s
\right|\,ds
\le
Ch \qquad\text{a.s.},
\]
which implies that $R_{t+h}-R_t=O(h)$ as $h\downarrow0$.
Hence
\[
  \frac{R_{t+h}-R_t}
       {\sqrt{4h\log\log(1/h)}}
  =
  O\left(
    \frac{\sqrt h}{\sqrt{\log\log(1/h)}}
  \right)
  \longrightarrow0
  \qquad\text{a.s.}
\]
On the other hand, $\dd^\top B$ is a one-dimensional Brownian motion with variance parameter $2$. Therefore, by the classical local law of the iterated logarithm,
\[
\limsup_{h\downarrow0}
\pm
\frac{\dd^\top B_{t+h}-\dd^\top B_t}
{\sqrt{4h\log\log(1/h)}}
=
1
\qquad\text{a.s.}
\]

Suppose next that $\alpha_1=\alpha_2\in [1/2,1)$. By
\Cref{cor:diff-volterra},
\[
V_t
=
\sqrt2\,W_t
-
\alpha_1\int_0^t\frac{V_s}{s}\,ds.
\]
Since $V$ has continuous sample paths, the function $s\mapsto V_s/s$
is almost surely bounded on a neighbourhood of $t$. Therefore, $\int_t^{t+h}V_s/s\,ds=O(h)$ a.s. as $h\downarrow0$.
Moreover, since $\sqrt2\,W$ is a one-dimensional Brownian motion with variance parameter $2$, the same local law of the iterated logarithm above holds.

Finally, in both cases the finite-variation increment is $O(h)$ almost
surely, and
\[
  \frac{O(h)}
       {\sqrt{4h\log\log(1/h)}}
  =
  O\left(
    \frac{\sqrt h}{\sqrt{\log\log(1/h)}}
  \right)
  \longrightarrow0.
\]
Thus the finite-variation term is negligible at the LIL scale, which
proves the assertion.
\end{proof}

\subsection{Proof of \Cref{thm:diff-fclt}}
\label{subsec:proof-diff-fclt}

It remains to consider the symmetric case
$\alpha_1=\alpha_2\in[1/2,1)$, for which we use a scalar
martingale-transform argument.

\begin{lemma}
\label{lem:scalar-fundamental}
Let $\alpha_1=\alpha_2\in[1/2,1)$ and define
\[
\phi(m,\ell)
:=
\prod_{j=\ell}^{m-1}
\left(1-\alpha_1/j\right),
\qquad
1\le\ell\le m,
\]
with the convention $\phi(m,m)=1$.
Then the following assertions hold.

\begin{enumerate}
\item
For all $1\le\ell\le m$,
$
0<\phi(m,\ell)\le1.
$

\item
For every $0<\delta<T<\infty$,
\[
\sup_{\delta\le s\le t\le T}
\left|
\phi([nt],[ns])
-
\left(t/s\right)^{-\alpha_1}
\right|
\to0,
\qquad n\to\infty.
\]
\end{enumerate}
\end{lemma}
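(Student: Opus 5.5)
Part (1) is immediate. Since $\alpha_1<1$ and $j\ge1$, each factor satisfies $0<1-\alpha_1/j<1$. The product $\phi(m,\ell)$ is therefore strictly positive and at most $1$, with $\phi(m,m)=1$ by convention. Only $\alpha_1\in[1/2,1)$ enters here, so the assumption $\alpha_1<1$ is what keeps the first factor ($j=1$) positive.

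For part (2) we mimic the proof of \Cref{lem:fundamental}(2) in the scalar setting. Fix $j_0$ large, say $j_0\ge2$. For $j\ge j_0$ we have
\[
\log(1-\alpha_1/j)=-\alpha_1/j+O(j^{-2}),
\]
with a uniform constant. Summing over $\ell\le j\le m-1$ and using
\[
\sum_{j=\ell}^{m-1}j^{-1}=\log(m/\ell)+O(\ell^{-1})
\]
uniformly in $m\ge\ell$ gives
\[
\phi(m,\ell)=(m/\ell)^{-\alpha_1}e^{E_{m,\ell}},\qquad |E_{m,\ell}|\le C/\ell .
\]
Since $(m/\ell)^{-\alpha_1}\le1$, this yields $|\phi(m,\ell)-(m/\ell)^{-\alpha_1}|\le C'/\ell$. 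For $\delta\le s\le t\le T$ and $n$ large we have $\ell=[ns]\ge n\delta/2\ge j_0$, so this error is $O(1/(n\delta))$ uniformly.

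It remains to compare $([nt]/[ns])^{-\alpha_1}$ with $(t/s)^{-\alpha_1}$. Uniformly on the region, $[nt]/[ns]=t/s+O(n^{-1})$ with a constant depending on $\delta,T$, and both ratios lie in a compact subset of $[1,\infty)$, say $[1,2T/\delta]$. On that set $x\mapsto x^{-\alpha_1}$ is Lipschitz, so this difference is also $O(n^{-1})$ uniformly. Combining the two estimates gives the uniform convergence, in fact at rate $O(1/n)$.

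The only point requiring care is the uniformity of the Taylor remainder for the logarithm, which needs $\ell\ge j_0$. This is ensured by the lower bound $s\ge\delta$, so no special treatment of small indices is needed, in contrast with \Cref{lem:fundamental}(3).
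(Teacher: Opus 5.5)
Your proposal is correct and follows the paper's argument. Part (1) uses that each factor lies in $(0,1)$. Part (2) uses the expansion $\log\phi(m,\ell)=-\alpha_1\log(m/\ell)+O(1/\ell)$, uniform in $m\ge\ell$, applied with $m=[nt]$ and $\ell=[ns]$; you also write out the uniform comparison of $([nt]/[ns])^{-\alpha_1}$ with $(t/s)^{-\alpha_1}$, which the paper leaves implicit.
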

\begin{proof}
The first assertion is immediate from $\alpha_1\in[1/2,1)$. For the
second, the same argument as in \Cref{lem:fundamental} gives
\[
  \log\phi(m,\ell)
  =
  -\alpha_1\log\frac{m}{\ell}
  +O\left(\frac1\ell\right),
  \qquad m\ge\ell,
\]
uniformly in $m$. The assertion follows by taking
$m=[nt]$ and $\ell=[ns]$.
\end{proof}

\begin{proposition}
\label{lem:scalar-martingale-fclt}
Suppose that $\alpha_1=\alpha_2\in[1/2,1)$.
Set 
\[
\zeta_{n+1}:=\dd^\top D_{n+1},
\qquad
N_n:=\sum_{k=1}^{n-1}\zeta_{k+1},
\qquad
N_0:=0.
\]
Then
\[
\left\{
\frac{N_{[nt]}}{\sqrt n}
\right\}_{t\ge0}
\Longrightarrow
\{\sqrt2\,W_t\}_{t\ge0}
\]
in $D([0,\infty),\mathbb R)$, where
$\{W_t\}_{t\ge0}$ is a standard Brownian motion.
\end{proposition}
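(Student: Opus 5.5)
We apply the scalar martingale functional central limit theorem (\cite[Theorem~2.1(ii)]{whitt2007proofs}) to $\mathcal N_n(t):=N_{[nt]}/\sqrt n$ with respect to $\mathcal G^{(n)}_t=\F_{[nt]}$, following the proof of \Cref{prop:martingale-fclt}. Note that the martingale-difference structure $D_{k+1}=\Delta S_{k+1}-AS_k/k$ and the identity $\E[D_{k+1}^{(1)}D_{k+1}^{(2)}\mid\F_k]=0$ were derived without using $\mu<1/4$. Hence they remain valid here, and $\{N_n\}$ is a scalar $\{\F_n\}$-martingale. Since $|\zeta_{k+1}|\le\|\dd\|\,\|D_{k+1}\|\le 4$, the jumps of $\mathcal N_n$ are $O(n^{-1/2})$. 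The Lindeberg condition therefore holds trivially, and the jumps of the predictable quadratic variation are $O(1/n)$.

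The key step is the predictable quadratic variation. Using the computation of \Cref{prop:martingale-fclt} together with the vanishing cross term,
\[
  \langle \mathcal N_n\rangle(t)=\frac1n\sum_{k=1}^{\nu_n(t)}\Bigl(2-\alpha_1^2\frac{(S_k^{(2)})^2+(S_k^{(1)})^2}{k^2}\Bigr),
\]
where we use $\alpha_1=\alpha_2$. We need $\sup_{t\le T}|\langle\mathcal N_n\rangle(t)-2t|\to0$ in probability. This amounts to showing
\[
  \frac1n\sum_{k=1}^{nT}\frac{\|S_k\|^2}{k^2}\xrightarrow{\Prob}0 .
\]
This is the main obstacle, because \Cref{lem:square-order} is no longer available: for $\alpha_1\ge1/2$ the sum $S^{(1)}+S^{(2)}$ is critical or superdiffusive. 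Instead we use the crude bound $\|S_k\|\le\sqrt2\,k$, which gives $\|S_k\|^2/k^2\le 2$. It then suffices to prove $\|S_k\|/k\to0$ almost surely, or even only in $L^2$, since Cesàro averages of terms tending to $0$ tend to $0$.

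To obtain this, we diagonalize as in \Cref{rem:explicit-Z}. The sum $\Sigma_k:=S_k^{(1)}+S_k^{(2)}$ satisfies
\[
  \E[\Sigma_{k+1}\mid\F_k]=(1+\alpha_1/k)\Sigma_k,
\]
so $\E[\Sigma_{k+1}^2]\le(1+2\alpha_1/k)\E[\Sigma_k^2]+C$. Iterating with $P_k\asymp k^{2\alpha_1}$ gives
\[
  \E[\Sigma_k^2]=O\bigl(k^{2\alpha_1}+k\log k\bigr),
\]
which is $o(k^2)$ because $2\alpha_1<2$. The difference $\mathcal D_k$ satisfies $\E[\mathcal D_{k+1}\mid\F_k]=(1-\alpha_1/k)\mathcal D_k$, so similarly $\E[\mathcal D_k^2]=O(k)$. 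Hence $\E\|S_k\|^2/k^2\to0$. Taking expectations in the display (bounded by the supremum at $t=T$, since the summands are nonnegative) and applying Markov's inequality, together with $\sup_t|\nu_n(t)/n-t|\le 2/n$, yields the required uniform convergence in probability to $2t$.

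Whitt's theorem then gives convergence on $[0,T]$ to a continuous martingale with quadratic variation $2t$, that is, to $\sqrt2\,W$. Since $T$ is arbitrary, the convergence holds in $D([0,\infty),\R)$.
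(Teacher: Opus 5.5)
Your proposal is correct and follows essentially the same route as the paper: the same conditional variance $\E[\zeta_{k+1}^2\mid\F_k]=2-\alpha_1^2\{(S_k^{(1)})^2+(S_k^{(2)})^2\}/k^2$, the same second-moment recursions for $S_k^{(1)}\pm S_k^{(2)}$ giving $\E\|S_k\|^2=o(k^2)$, then Markov's inequality and Whitt's martingale FCLT. The only cosmetic difference is that you finish with a Ces\`aro argument, whereas the paper records explicit rates $O((\log n)^2/n)$ and $O(n^{2\alpha_1-2})$; your appeal to the crude bound $\|S_k\|\le\sqrt2\,k$ is not needed on the $L^2$ route but does no harm.
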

\begin{proof}
Set
$
  U_k^\pm:=S_k^{(1)}\pm S_k^{(2)}.
$
Since
$
  \E[U_{k+1}^\pm-U_k^\pm\mid\F_k]
  =
  \pm\frac{\alpha_1}{k}U_k^\pm
$
and $|U_{k+1}^\pm-U_k^\pm|\le2$, the standard second-moment
recursion gives
\[
  \E[(U_k^-)^2]=O(k),
  \qquad
  \E[(U_k^+)^2]
  =
  \begin{cases}
    O(k\log k), & \alpha_1=1/2,\\
    O(k^{2\alpha_1}), & \alpha_1>1/2.
  \end{cases}
\]
Consequently,
\begin{equation}
\label{eq:symmetric-second-moment}
  \E\bigl[(S_k^{(1)})^2+(S_k^{(2)})^2\bigr]
  =
  \begin{cases}
    O(k\log k), & \alpha_1=1/2,\\
    O(k^{2\alpha_1}), & \alpha_1>1/2.
  \end{cases}
\end{equation}

Since
\[
  \E[X_{k+1}^{(1)}-X_{k+1}^{(2)}\mid\F_k]
  =
  -\frac{\alpha_1}{k}
  \bigl(S_k^{(1)}-S_k^{(2)}\bigr),
\]
and
\[
  \E\bigl[
    (X_{k+1}^{(1)}-X_{k+1}^{(2)})^2
    \mid\F_k
  \bigr]
  =
  2
  -
  2\alpha_1^2
  \frac{S_k^{(1)}S_k^{(2)}}{k^2},
\]
we obtain
\begin{align*}
  \E[\zeta_{k+1}^2\mid\F_k]
  &=
  2
  -
  2\alpha_1^2
  \frac{S_k^{(1)}S_k^{(2)}}{k^2}
  -
  \alpha_1^2
  \frac{(S_k^{(1)}-S_k^{(2)})^2}{k^2}\\
  &=
  2
  -
  \frac{\alpha_1^2}{k^2}
  \bigl\{(S_k^{(1)})^2+(S_k^{(2)})^2\bigr\}.
\end{align*}
Moreover, $|\zeta_{k+1}|\le4$.

For
\[
  N^{(n)}(t):=\frac{N_{[nt]}}{\sqrt n},
  \qquad
  \nu_n(t):=([nt]-1)_+,
\]
the predictable quadratic variation is therefore
\[
  \langle N^{(n)}\rangle_t
  =
  \frac{2\nu_n(t)}{n}
  -
  \frac{\alpha_1^2}{n}
  \sum_{k=1}^{\nu_n(t)}
  \frac{(S_k^{(1)})^2+(S_k^{(2)})^2}{k^2}.
\]
By~\eqref{eq:symmetric-second-moment}, for every $T>0$,
\[
  \E\left[
    \frac1n
    \sum_{k=1}^{\nu_n(T)}
    \frac{(S_k^{(1)})^2+(S_k^{(2)})^2}{k^2}
  \right]
  =
  \begin{cases}
    O((\log n)^2/n), & \alpha_1=1/2,\\
    O(n^{2\alpha_1-2}), & \alpha_1>1/2,
  \end{cases}
\]
which tends to zero since $\alpha_1<1$. Hence, for every $T>0$,
\[
  \sup_{0\le t\le T}
  \bigl|\langle N^{(n)}\rangle_t-2t\bigr|
  \xrightarrow{\Prob}0.
\]
Finally, since $|\zeta_{k+1}|\le4$, the maximum jumps tend to zero and
the Lindeberg condition is immediate. The martingale functional central
limit theorem \cite[Theorem~2.1(ii)]{whitt2007proofs} therefore yields
the result.
\end{proof}

\begin{proof}[Proof of \Cref{thm:diff-fclt}]
Suppose first that $\mu<1/4$. Since the map
$
  x\longmapsto \dd^\top x
$
from $D([0,\infty),\R^2)$ to $D([0,\infty),\R)$ is continuous in the
Skorokhod $J_1$-topology, \Cref{thm:main} and the continuous mapping
theorem yield the assertion.

Suppose next that $\alpha_1=\alpha_2\in[1/2,1)$. Since
$\dd^\top A=-\alpha_1\dd^\top$, we have
\[
  \dd^\top\Phi(m,\ell)=\phi(m,\ell)\dd^\top.
\]
Applying $\dd^\top$ to the representation in~\Cref{lem:fundamental}
therefore gives
\begin{equation}
\label{eq:scalar-difference-representation}
  \mathcal D_m
  =
  \phi(m,1)\mathcal D_1
  +
  \sum_{k=1}^{m-1}\phi(m,k+1)\zeta_{k+1},
  \qquad
  \zeta_{k+1}:=\dd^\top D_{k+1}.
\end{equation}
By~\Cref{lem:scalar-martingale-fclt},
\[
  \left\{
    \frac{N_{[nt]}}{\sqrt n}
  \right\}_{t\ge0}
  \Longrightarrow
  \{\sqrt2\,W_t\}_{t\ge0},
  \qquad
  N_n:=\sum_{k=1}^{n-1}\zeta_{k+1}.
\]

We now apply the scalar counterpart of the martingale-transform
argument developed in \Cref{sec:proof}. By~\Cref{lem:scalar-fundamental},
$\phi$ satisfies the corresponding kernel approximation. Moreover,
the logarithmic estimate in its proof gives
\[
  \sup_{r\ge0}\sup_{2^r\le m\le2^{r+1}}
  \left\{
    \phi(m,2^r)+\phi(m,2^r)^{-1}
  \right\}
  <\infty.
\]
Since $0<\phi(m,\ell)\le1$, the dyadic decomposition used in
\Cref{lem:fundamental-maximal}, now in the scalar setting, gives
\begin{equation}
\label{eq:scalar-maximal}
  \E\max_{1\le m\le N}
  \left|
    \sum_{k=1}^{m-1}
    \phi(m,k+1)\zeta_{k+1}
  \right|^2
  \le CN.
\end{equation}

Fix $0<\varepsilon<T$ and set $a=\lfloor n\varepsilon\rfloor$.
Summation by parts as in~\eqref{sec43:abel}, together with
\Cref{lem:scalar-martingale-fclt} and the kernel approximation in
\Cref{lem:scalar-fundamental}, yields
\[
  \left\{
    \frac1{\sqrt n}
    \sum_{k=a}^{[nt]-1}
    \phi([nt],k+1)\zeta_{k+1}
  \right\}_{t\in[\varepsilon,T]}
  \Longrightarrow
  \left\{
    \sqrt2\int_\varepsilon^t
    \left(\frac ts\right)^{-\alpha_1}\,dW_s
  \right\}_{t\in[\varepsilon,T]}.
\]

It remains to control the contribution from the neighborhood of the
origin. By~\eqref{eq:scalar-maximal},
\[
  \E\max_{1\le m\le a}
  \left|
    \sum_{k=1}^{m-1}
    \phi(m,k+1)\zeta_{k+1}
  \right|^2
  \le Ca.
\]
For $m\ge a$, the multiplicative property of $\phi$ gives
\[
  \sum_{k=1}^{a-1}\phi(m,k+1)\zeta_{k+1}
  =
  \phi(m,a)
  \sum_{k=1}^{a-1}\phi(a,k+1)\zeta_{k+1}.
\]
Since $0<\phi(m,a)\le1$, it follows that
\[
  \E\sup_{0\le t\le T}
  \left|
    Y_n(t)-Y_n^{(\varepsilon)}(t)
  \right|^2
  \le
  \frac{Ca}{n}
  \le C\varepsilon,
\]
where $Y_n^{(\varepsilon)}$ denotes the process with the terms before
time $\varepsilon$ removed. Thus the contribution from the neighborhood
of the origin is negligible as $\varepsilon\downarrow0$.

Now set
\[
  V_t
  :=
  \sqrt2\int_0^t
  \left(\frac ts\right)^{-\alpha_1}\,dW_s,
  \qquad t>0,
  \qquad
  V_0:=0.
\]
For $t\ge\varepsilon$,
\[
  V_t
  -
  \sqrt2\int_\varepsilon^t
  \left(\frac ts\right)^{-\alpha_1}\,dW_s
  =
  \left(\frac{t}{\varepsilon}\right)^{-\alpha_1}V_\varepsilon.
\]
Since $(t/\varepsilon)^{-\alpha_1}\le1$ and $V$ has a continuous
modification, the truncated limiting processes converge uniformly on
compact intervals to $V$ as $\varepsilon\downarrow0$. Hence the
converging-together argument 
yields
\[
  \left\{
    \frac1{\sqrt n}
    \sum_{k=1}^{[nt]-1}
    \phi([nt],k+1)\zeta_{k+1}
  \right\}_{t\ge0}
  \Longrightarrow
  \{V_t\}_{t\ge0}.
\]

Finally, since $0<\phi(m,1)\le1$,
\[
  \sup_{1/n\le t\le T}
  \frac{|\phi([nt],1)\mathcal D_1|}{\sqrt n}
  \le
  \frac{|\mathcal D_1|}{\sqrt n}
  \longrightarrow0.
\]
Combining this with~\eqref{eq:scalar-difference-representation} gives
the assertion.
\end{proof}

\section{First separation time of the two walks}
\label{sec:exit}

We finally study the first separation time of the two interacting walks.
This can be expressed as the first exit time of the difference process. 
Throughout this section, we assume either $\mu<1/4$ or $\alpha_1=\alpha_2\in[1/2,1)$, as in \Cref{thm:diff-fclt}.

For each positive integer $c>0$, define
\begin{equation}
\label{eq:exit-discrete}
  \sigma_c:=\inf\bigl\{n\ge1:\bigl|\mathcal{D}_n\bigr|\ge 2c\bigr\}.
\end{equation}
If the threshold $2c$ is never reached, we set $\sigma_c=\infty$.
Rescaling time by $c^2$ yields the identity
\begin{equation}
\label{eq:exit-rescaling}
  \frac{\sigma_c}{c^2}
  =\inf\Biggl\{t>0:
  \Biggl|\frac{\mathcal{D}_{[c^2 t]}}{c}\Biggr|\ge2\Biggr\},
\end{equation}
with the usual convention $\inf\emptyset=\infty$.

\begin{proposition}
\label{prop:exit-convergence}
As $c\to\infty$,
\begin{equation}
\label{eq:exit-limit}
  \frac{\sigma_c}{c^2}
  \ \Longrightarrow\
  \tau_{\alpha_1,\alpha_2}
  :=\inf\bigl\{t>0:\bigl|V_t\bigr|\ge 2\bigr\}.
\end{equation}
\end{proposition}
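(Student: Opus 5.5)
The plan is to derive \eqref{eq:exit-limit} from \Cref{thm:diff-fclt} by the continuous mapping theorem, applied to the first-passage functional
\[
  \Psi(x):=\inf\{t>0:|x(t)|\ge2\},\qquad x\in D([0,\infty),\R).
\]
Write $n:=c^2$. By \eqref{eq:exit-rescaling}, $\sigma_c/c^2=\Psi(x_c)$ with $x_c(t):=\mathcal D_{[c^2t]}/c=\mathcal D_{[nt]}/\sqrt n$. Since $c$ runs through the positive integers, $n=c^2$ runs along a subsequence of $\N$. Hence \Cref{thm:diff-fclt} gives $x_c\Longrightarrow V$ in $D([0,\infty),\R)$. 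It then suffices to show that $\Psi$ is continuous at almost every path of $V$, and to invoke the continuous mapping theorem with values in $[0,\infty]$.

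First, I would record the deterministic continuity criterion. Let $x$ be continuous with $x(0)=0$, and suppose $\Psi(x)<\infty$. Suppose also that $x$ \emph{crosses} level $2$ at $\Psi(x)$, meaning that for every $\delta>0$ there is $t\in(\Psi(x),\Psi(x)+\delta)$ with $|x(t)|>2$. Then $x_k\to x$ in $J_1$ implies $\Psi(x_k)\to\Psi(x)$. The argument uses the fact that $J_1$ convergence to a continuous limit is locally uniform. For the lower bound, on $[0,\Psi(x)-\delta]$ we have $\sup|x|<2$, so $\sup|x_k|<2$ there for large $k$. For the upper bound, pick $t<\Psi(x)+\delta$ with $|x(t)|>2$; then $|x_k(t)|>2$ for large $k$. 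The case $\Psi(x)=\infty$ needs no extra condition, because only the lower bound is needed and it gives $\Psi(x_k)\to\infty$.

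Second, I would verify that almost every path of $V$ satisfies this crossing condition at $\tau:=\tau_{\alpha_1,\alpha_2}$. This is the main obstacle, and I would use the semimartingale structure from \Cref{cor:diff-volterra}. In both cases,
\[
  V_t=\beta_t+R_t,
\]
where $\beta$ is a Brownian motion with variance parameter $2$ (either $\dd^\top B$ or $\sqrt2W$), and $R$ is a finite-variation process whose derivative is locally bounded on $(0,\infty)$ by path continuity. The event $\{\tau<\infty\}$ forces $\tau>0$, since $V_0=0$. By the strong Markov property of the underlying Brownian motion $B$ (or $W$) at the stopping time $\tau$, the increments $\beta_{\tau+h}-\beta_\tau$ form a Brownian motion independent of $\mathcal F_\tau$. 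The local law of the iterated logarithm therefore applies at the random time $\tau$. Since $|R_{\tau+h}-R_\tau|=O(h)$, this gives
\[
  \limsup_{h\downarrow0}\pm\frac{V_{\tau+h}-V_\tau}{\sqrt{4h\log\log(1/h)}}=1\quad\text{a.s.}
\]
This is the same argument as in \Cref{prop:diff-local-lil}, but at a stopping time. Taking the sign that matches $V_\tau=\pm2$ shows that $|V|$ exceeds $2$ immediately after $\tau$. Hence the crossing condition holds almost surely on $\{\tau<\infty\}$.

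Finally, $\tau<\infty$ almost surely by \Cref{prop:diff-lil}, since $\limsup V_t/\sqrt{t\log\log t}>0$ and so $|V|$ is unbounded. This fact is not strictly needed for the convergence statement, but it identifies the limit as a proper random variable. The continuous mapping theorem, using that $\Psi$ is continuous on a set of full $V$-law, then yields \eqref{eq:exit-limit}.
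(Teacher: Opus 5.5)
Your proposal is correct and follows essentially the same route as the paper. Both show that the first-exit functional is continuous at continuous paths that strictly cross level $2$ immediately after the exit time. Both verify this crossing almost surely for $V$ via the decomposition of \Cref{cor:diff-volterra}, the strong Markov property at $\tau$ and the local law of the iterated logarithm, and then conclude with the continuous mapping theorem. Your separate treatment of paths with $\Psi(x)=\infty$ is a harmless refinement; the paper instead builds $\tau(x)<\infty$ into its continuity set and uses \Cref{prop:diff-lil} to ensure this holds almost surely.
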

\begin{proof}
Write $X^{(c)}(t):=\mathcal D_{[c^2t]}/c$.
By \Cref{thm:diff-fclt}, we have $X^{(c)}\Rightarrow V$ in $D([0,\infty),\R)$.
Define the first-exit map $\tau:D([0,\infty),\R)\to [0,\infty]$ by $\tau(x):=\inf\{t>0:|x(t)|\ge2\}$, with $\inf\emptyset=\infty$.
By \eqref{eq:exit-rescaling}, $\sigma_c/c^2=\tau(X^{(c)})$.

We first identify a continuity set for $\tau$. Let
\[
\mathcal C
:=
\left\{
x\in C([0,\infty),\R):
\begin{array}{l}
|x(0)|<2,\ \tau(x)<\infty,\\[1mm]
\text{and for every }\varepsilon>0
\text{ there exists }
t\in(\tau(x),\tau(x)+\varepsilon)\\
\text{such that }|x(t)|>2
\end{array}
\right\}.
\]
We show that $\tau$ is continuous at every $x\in\mathcal C$
with respect to uniform convergence on compact intervals. 
Let $x_n\to x$ uniformly on compact intervals and write
$\tau:=\tau(x)<\infty$.
Fix $\varepsilon>0$ with $\varepsilon<\tau$.
Since $|x(t)|<2$ for $0\le t\le \tau-\varepsilon$,
and $x$ is continuous, we have $\max_{0\le t\le\tau-\varepsilon}|x(t)|<2$. Therefore, there exists $\delta>0$ such that $|x(t)|\le2-\delta$ for $0\le t\le\tau-\varepsilon$.
By the assumption of uniform convergence on compact intervals, $\sup_{0\le t\le\tau-\varepsilon}|x_n(t)-x(t)|<\delta$
for all sufficiently large $n$. Hence
\[
  |x_n(t)|
  \le
  |x(t)|+|x_n(t)-x(t)|
  <
  2,
  \qquad
  0\le t\le\tau-\varepsilon,
\]
which implies that $\tau(x_n)\ge\tau-\varepsilon$.
On the other hand, since $x\in\mathcal C$, there exists
$t_\varepsilon\in(\tau,\tau+\varepsilon)$ such that $|x(t_\varepsilon)|>2$.
Hence there exists $\delta>0$ such that $|x(t_\varepsilon)|\ge2+\delta$.
By the assumption of uniform convergence on compact intervals, $\sup_{0\le t\le\tau+\varepsilon}
|x_n(t)-x(t)|<\delta$ for all sufficiently large $n$. Therefore,
\[
|x_n(t_\varepsilon)|
\ge
|x(t_\varepsilon)|
-
|x_n(t_\varepsilon)-x(t_\varepsilon)|
>
2,
\]
and hence $\tau(x_n)\le t_\varepsilon<\tau+\varepsilon$.
Consequently, 
$\tau(x_n)\to\tau(x)$ as $n\to\infty$.

We next claim that $\Prob(V\in\mathcal C)=1$.
By~\Cref{prop:diff-lil}, $\tau:=\tau(V)<\infty$ almost surely, while
$\tau>0$ since $V_0=0$ and $V$ has continuous sample paths.
Moreover, $V$ is adapted to the augmented filtration of the corresponding
driving Brownian motion, and hence $\tau$ is a stopping time with respect to this filtration.

In either regime of~\Cref{thm:diff-fclt}, the strong Markov property
implies that the increment of the driving Brownian motion after $\tau$
is a fresh Brownian motion. Thus, by the representations in
\Cref{cor:diff-volterra}, there exists a standard Brownian motion
$\widetilde W$ such that, almost surely,
\[
  \left|
    V_{\tau+h}-V_\tau-\sqrt2\,\widetilde W_h
  \right|
  \le C(\omega)h
\]
for all sufficiently small $h>0$, where $C(\omega)<\infty$. Indeed, since $\tau>0$, the
corresponding drift integrand is continuous, and hence bounded, on
$[\tau,\tau+1]$ for almost every sample path.
Hence
\[
  \frac{C(\omega)h}
       {\sqrt{4h\log\log(1/h)}}
  =
  \frac{C(\omega)\sqrt h}
       {2\sqrt{\log\log(1/h)}}
  \longrightarrow0
  \qquad\text{a.s.}
\]
Therefore, the local law of the iterated logarithm yields
\[
  \limsup_{h\downarrow0}
  \frac{V_{\tau+h}-V_\tau}
       {\sqrt{4h\log\log(1/h)}}
  =1,
  \qquad
  \liminf_{h\downarrow0}
  \frac{V_{\tau+h}-V_\tau}
       {\sqrt{4h\log\log(1/h)}}
  =-1
  \qquad\text{a.s.}
\]

Thus $V_{\tau+h}-V_\tau$ takes both signs arbitrarily close to
$h=0$. By continuity of $V$, we have $|V_\tau|=2$ on $\{\tau<\infty\}$. It follows that, for every
$\varepsilon>0$, there exists $t\in(\tau,\tau+\varepsilon)$ such that
$|V_t|>2$. Hence $V\in\mathcal C$ almost surely.

Since $V$ has continuous sample paths almost surely, $J_1$-convergence
to $V$ is equivalent to uniform convergence on compact intervals.
Since $\Prob(V\in\mathcal C)=1$ and $\tau$ is continuous on
$\mathcal C$, the continuous mapping theorem yields the assertion.
\end{proof}

\begin{proposition}
\label{prop:uniform-moment}
For every $p>1$, there exists a constant $C_p<\infty$, depending only on
$p$ and $(\alpha_1,\alpha_2)$, such that
\begin{equation}
\label{eq:uniform-moment}
 \sup_{c\ge1}
 \left\|
 \frac{\sigma_c}{c^2}
 \right\|_{L^p}
 \le C_p.
\end{equation}
In particular, the family
$\left\{\sigma_c/c^2\right\}_{c\ge1}$ is uniformly integrable.
\end{proposition}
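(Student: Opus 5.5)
We will prove a geometric tail bound of the form
\[
  \Prob(\sigma_c> k\,K c^2)\le (1-\rho)^k,\qquad k\ge0,\ c\ge1,
\]
for constants $K<\infty$ and $\rho\in(0,1)$ depending only on $(\alpha_1,\alpha_2)$. Then $\E[(\sigma_c/c^2)^p]\le \sum_k ((k+1)K)^p(1-\rho)^k=:C_p^p$ uniformly in $c$, which gives \eqref{eq:uniform-moment}. Uniform integrability follows because the family is bounded in $L^p$ for some $p>1$.

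The geometric bound comes from the Markov-type inequality
\[
  \Prob\bigl(\sigma_c>m+Kc^2\mid\F_m\bigr)\le1-\rho\quad\text{a.s. on }\{\sigma_c>m\},
\]
uniformly in $m\ge0$ and $c\ge1$, iterated over the times $m=jKc^2$. Conditioning on $\F_m$ is legitimate because $(S_n)$ is not Markov, but all future steps are driven by $\F_m$ together with independent fresh randomness.

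To get this inequality, I would use the decomposition from \Cref{lem:fundamental}, or in the symmetric case \eqref{eq:scalar-difference-representation}, started at time $m$:
\[
  \mathcal D_{m+n}=\dd^\top\Phi(m+n,m)S_m+\sum_{k=m}^{m+n-1}\dd^\top\Phi(m+n,k+1)D_{k+1}.
\]
The first term is $\F_m$-measurable. The second term is a conditional martingale transform with bounded increments.

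I would prefer a cleaner route that avoids the starting position. Let $N_n=\sum\dd^\top D_{k+1}$. Its conditional variance increments are $\E[\zeta_{k+1}^2\mid\F_k]=2-(\text{correction})$. When $|\mathcal D_k|<2c$ and $k$ is large, the correction should be small. More robustly, use the second moment of $\mathcal D$ itself. From the increment structure,
\[
  \E[\mathcal D_{k+1}^2-\mathcal D_k^2\mid\F_k]=\frac2k\mathcal D_k\,\dd^\top AS_k+\E[(X^{(1)}_{k+1}-X^{(2)}_{k+1})^2\mid\F_k].
\]
The last term equals $2-2\alpha_1\alpha_2S^{(1)}_kS^{(2)}_k/k^2$.

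In the symmetric case $\dd^\top A=-\alpha_1\dd^\top$, so the drift term is $-2\alpha_1\mathcal D_k^2/k$. This is bounded by $8|\alpha_1|c^2/k$ on $\{k<\sigma_c\}$. The variance term is at least $2-2\alpha_1^2\ge c_0>0$ when $\alpha_1<1$, using $|S^{(i)}_k|\le k$. Hence $\mathcal D_{k\wedge\sigma_c}^2-\sum_{j<k\wedge\sigma_c}(c_0-8c^2/j)$ is a submartingale. Optional stopping on $[m,m+Kc^2]$, with $\mathcal D^2\le(2c+2)^2$ before exit, gives
\[
  \E\bigl[(\sigma_c\wedge(m+Kc^2))-m\mid\F_m\bigr]\lesssim c^2+\sum_{j=m}^{m+Kc^2}\frac{c^2}{j}.
\]

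The main obstacle is the harmonic term for small $m$. It is $O(c^2\log(1+Kc^2/m))$, which is not uniformly $O(c^2)$. I would first handle the initial window $m\le c^2$ separately, using $\E\max_{n\le N}\mathcal D_n^2\le CN$ from \Cref{lem:fundamental-maximal} and \eqref{eq:scalar-maximal}, or simply restart the argument at $m\ge c^2$. For $m\ge c^2$ the harmonic sum is $\le c^2\log(1+K)$. The conditional expectation bound $\lesssim c^2(1+\log(1+K))$ then beats $Kc^2/2$ for $K$ large, and Markov's inequality yields $\rho$.

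For general $\mu<1/4$, $\dd^\top AS_k$ is not a multiple of $\mathcal D_k$. There the drift term $\frac2k\mathcal D_k\,\dd^\top AS_k$ can be of size $c\sqrt k$, which is the real difficulty. To handle it, I would replace $\mathcal D_k^2$ by a Lyapunov functional $S_k^\top PS_k$ adapted to $A$, as in \Cref{lem:square-order}, combined with the non-degeneracy of the conditional variance of $\zeta$. If that fails, I would use a small-ball argument: conditional on $\F_m$, the rescaled increment over $[m,m+Kc^2]$ converges, uniformly over $m\ge c^2$, to a Gaussian with variance $\gtrsim Kc^2$ plus a deterministic shift. Anderson's inequality bounds the probability of staying in $[-2c,2c]$ by $\Prob(|N(0,1)|\le 2/\sqrt{K'})<1-\rho$.
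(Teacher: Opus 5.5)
There is a genuine gap, in the non-symmetric case. Your symmetric-case argument is correct and takes a different route from the paper. With $\dd^\top A=-\alpha_1\dd^\top$, the drift of $\mathcal D_k^2$ before exit is at least $-8c^2/k$, and the conditional variance of the increment is at least $2-2\alpha_1^2$. Optional stopping on blocks started at $m\ge c^2$, plus Markov's inequality, then gives the geometric tail. But the proposition is asserted for every $\mu<1/4$, and for $\alpha_1\ne\alpha_2$ you only list strategies, none of which closes.

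\emph{Why the $\mathcal D^2$ argument fails.} With $R_k:=S_k^{(1)}+S_k^{(2)}$ one has $\dd^\top AS_k=\tfrac{\alpha_1-\alpha_2}{2}R_k-\tfrac{\alpha_1+\alpha_2}{2}\mathcal D_k$. So the drift of $\mathcal D_k^2$ contains $\tfrac{\alpha_1-\alpha_2}{k}\mathcal D_kR_k$. The block estimate must hold a.s.\ on $\{\sigma_c>m\}$, i.e.\ for every reachable $\F_m$, and nothing in $\{\sigma_c>k\}$ controls $R_k$. This term can therefore be of order $c$ per step, with either sign. Your compensator lower bound becomes $c_0-Cc<0$, and optional stopping yields nothing.

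\emph{Why the fallbacks fail.} A Lyapunov functional $S_k^\top PS_k$ controls the joint walk, not the event $\{|\mathcal D_n|\ge2c\}$. The small-ball route needs a conditional CLT uniform over $\F_m$, which you do not supply; the conditional bracket $\sum_k w_k^\top Q_kw_k$ is random. Its variance lower bound is also false in the admissible case $(\alpha_1,\alpha_2)=(1,-1)$. Start from $X_1=(-1,1)$. With positive probability, and with $\mathcal D_j=-2$ throughout (so $\sigma_c>m$ for $c\ge2$), one reaches $S_m=(m-2,m)$, where $Q_m=\diag(0,4/m-4/m^2)$. The conditional variance over the next block is then not bounded below by a multiple of $Kc^2$ uniformly in $m$. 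Anderson's inequality applied to a nearly degenerate Gaussian gives only the trivial bound $\approx1$. What rescues this configuration is the large conditional mean, which your argument never uses.

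\emph{What is missing.} The paper uses a Paley--Zygmund step, which makes the size and sign of the drift irrelevant. On a block $[n,N]$ with $N=n+m$ and $m=Lc^2\le n$, write $\mathcal D_N=z+Y$, where $z=\E_n[\mathcal D_N]$, $Y=\sum_{k=n}^{N-1}w_k^\top D_{k+1}$ and $w_k=\Phi(N,k+1)^\top\dd$. Burkholder gives $\E_n[Y^4]\le Cm^2$. Suppose $H:=z^2+\E_n[Y^2]\ge\kappa(z^2+m)$ on $\{\sigma_c>n\}$. Then $\E_n[\mathcal D_N^4]\le KH^2$, so $\Prob(\mathcal D_N^2\ge H/2\mid\F_n)\ge1/(4K)$, and $H/2\ge4c^2$ once $\kappa L\ge8$.

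The lower bound on $H$ comes from two cases:
\begin{itemize}
\item If $\delta=2-\alpha_1^2-\alpha_2^2>0$, it comes from the last $m/8$ steps of the block, where $w_k\approx\dd$ and $\tr Q_k\ge\delta$.
\item In the degenerate case $(\alpha_1,\alpha_2)=(\pm1,\mp1)$, it comes from the rotation form $\Phi(N,n)=\rho(\cos\vartheta\,I+\sin\vartheta\,A)$. This shows that whenever $Q_k$ may degenerate (i.e.\ $|R_n|>n$), the shift satisfies $|z|\ge m/16\ge\sqrt m$.
\end{itemize}
Some uniform anti-concentration bound that exploits $z$ in this way is indispensable. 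Without it, the general case of the proposition remains unproved.
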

\begin{proof}
We retain the notation $S_n$, $D_{n+1}$ and $\Phi$ from Section 4, and recall
that $\mathcal{D}_n=\dd^\top S_n$ with $\dd=(1,-1)^\top$.
Write $\E_n[\cdot]=\E[\cdot\mid\F_n]$. As in the proof of Proposition 4.2,
\[
 Q_k:=\E_k[D_{k+1}D_{k+1}^{\top}]
 =\diag\left(
  1-\alpha_1^2\left(\frac{S_k^{(2)}}{k}\right)^2,
  1-\alpha_2^2\left(\frac{S_k^{(1)}}{k}\right)^2
 \right).
\]

Choose an integer $L\ge256$ divisible by $8$, to be enlarged below. For
integers $c,j\ge1$, set $m=Lc^2$, $n=jm$ and $N=n+m$. Iterating the recursion in
Lemma 4.3 gives
\[
 \mathcal{D}_N=z+Y,\qquad
 z=\dd^\top\Phi(N,n)S_n,\qquad
 Y=\sum_{k=n}^{N-1}w_k^\top D_{k+1},\qquad
 w_k=\Phi(N,k+1)^\top \dd.
\]
For the Euclidean operator norm,
$\|A\|=\max\{|\alpha_1|,|\alpha_2|\}\le1$. Thus, for $n\le k<N$,
\[
 \|\Phi(N,k+1)\|
 \le\prod_{\ell=k+1}^{N-1}\left(1+\frac{1}{\ell}\right)
 =\frac{N}{k+1}\le\frac{n+m}{n+1}<2,
\]
since $m\le n$.
Consequently, $Y$ is a sum of $m$ uniformly bounded martingale differences,
with $\E_n[Y]=0$ and $z=\E_n[\mathcal{D}_N]$.
Conditional orthogonality and Burkholder's inequality
\cite[Theorem~2.10]{hall1980martingale} yield
\[
 \E_n[Y^2]=\E_n\left[\sum_{k=n}^{N-1}w_k^\top Q_k w_k\right],
 \qquad \E_n[Y^4]\le C m^2.
\]
The constant $C$ is independent of $c,j$ and $L$.

We claim that, for some $\kappa>0$ depending only on
$(\alpha_1,\alpha_2)$,
\[
 H:=\E_n[\mathcal{D}_N^2]=z^2+\E_n[Y^2]
 \ge\kappa(z^2+m)\qquad\text{on }\{\sigma_c>n\}.
\]
Put $\delta=2-\alpha_1^2-\alpha_2^2$ and distinguish two cases.

\smallskip
\noindent\emph{Case 1: $\delta>0$.}
This includes the symmetric case
$\alpha_1=\alpha_2\in[1/2,1)$, for which
$\delta=2(1-\alpha_1^2)>0$.
For $N-m/8\le k<N$, expanding the same product and using $N\ge2m$ give
\[
 \|\Phi(N,k+1)-I\|
 \le\prod_{\ell=k+1}^{N-1}\left(1+\frac{1}{\ell}\right)-1
 =\frac{N-k-1}{k+1}\le\frac{1}{15}.
\]
Hence $\|w_k-\dd\|\le\sqrt{2}/15<1/2$, so $w_{k,i}^2\ge1/4$ for
$i=1,2$. As $Q_k$ is diagonal and $\tr Q_k\ge\delta$, we obtain
\[
 \E_n[Y^2]\ge\frac{\delta m}{32}.
\]
Since $\delta\le2$,
\[
 H
 =z^2+\E_n[Y^2]
 \ge z^2+\frac{\delta m}{32}
 \ge\frac{\delta}{32}(z^2+m).
\]
Thus the claim holds with $\kappa=\delta/32$, even without restricting
to $\{\sigma_c>n\}$.

\noindent\emph{Case 2: $\delta=0$.}
Under the standing assumptions, $(\alpha_1,\alpha_2)$ is either
$(1,-1)$ or $(-1,1)$. In this case $A^\top=-A$ and $A^2=-I$.
Consequently, each factor $I+A/k$ is a scalar at least $1$ times an
orthogonal matrix, and therefore $\|w_k\|^2\ge2$.
Set $R_n=S_n^{(1)}+S_n^{(2)}$ and work on $\{\sigma_c>n\}$, where
$|\mathcal{D}_n|<2c$.

If $|R_n|\le n$, then $|S_n^{(i)}|\le n/2+c$. Bounded increments give,
for $n\le k<n+m/8$ and $i=1,2$,
\[
 \frac{|S_k^{(i)}|}{k}
 \le\frac{n/2+c+(k-n)}{n}
 \le\frac{5}{8}+\frac{1}{L}\le\frac{3}{4}.
\]
Hence $Q_k\succeq I/4$ throughout this interval, and
\[
 \E_n[Y^2]\ge\frac{m}{8}\cdot\frac{1}{4}\cdot2=\frac{m}{16}.
\]
Therefore,
\[
 H
 =z^2+\E_n[Y^2]
 \ge z^2+\frac{m}{16}
 \ge\frac1{16}(z^2+m).
\]
If $|R_n|>n$, use the factorization
\[
 \Phi(N,n)=\rho(\cos\vartheta I+\sin\vartheta A),\qquad
 \rho=\prod_{k=n}^{N-1}\sqrt{1+\frac{1}{k^2}}\ge1,\qquad
 \vartheta=\sum_{k=n}^{N-1}\arctan\frac{1}{k}.
\]
Since $m\le n$, using $\arctan u\ge u/2$ for $u\in[0,1]$ and
$\log(1+x)\ge x/2$ for $x\in[0,1]$, we obtain
\[
 \vartheta
 \ge\frac12\sum_{k=n}^{N-1}\frac1k
 \ge\frac12\log\left(1+\frac{m}{n}\right)
 \ge\frac{m}{4n}.
\]
Moreover,
\[
 \vartheta
 \le\sum_{k=n}^{N-1}\frac1k
 \le\frac{m}{n}\le1.
\]
Hence, since $\sin\vartheta\ge\vartheta/2$ on $[0,1]$,
\[
 \sin\vartheta\ge\frac{m}{8n}.
\]
Since $\dd^\top A=\alpha_1(1,1)$,
$z=\rho(\cos\vartheta \mathcal{D}_n+\alpha_1\sin\vartheta R_n)$.
Using $|\alpha_1|=1$, $L\ge256$ and $c\ge1$, we obtain
\[
 |z|\ge\sin\vartheta |R_n|-|\mathcal{D}_n|
 \ge\frac{m}{8}-2c\ge\frac{m}{16}\ge\sqrt m.
\]
Thus $z^2\ge m$, and hence
\[
 H\ge z^2\ge\frac12(z^2+m)\ge\frac1{16}(z^2+m).
\]
Therefore, in both cases the claim holds with $\kappa=1/16$.

\smallskip
In both cases, the fourth-moment bound gives, on $\{\sigma_c>n\}$,
\[
 \E_n[\mathcal{D}_N^4]
 \le8z^4+8\E_n[Y^4]
 \le C(z^4+m^2)\le K H^2,
\]
where $K<\infty$ depends only on the parameters. On the event $\{\mathcal{D}_N^2<H/2\}$, we have
$\mathcal{D}_N^2\le H/2$. Hence
\[
 \frac{H}{2}
 \le
 \E_n\left[
   \mathcal{D}_N^2
   \mathbf1_{\{\mathcal{D}_N^2\ge H/2\}}
 \right]
 \le
 \E_n[\mathcal{D}_N^4]^{1/2}
 \mathbb{P}\left(
   \mathcal{D}_N^2\ge\frac{H}{2}
   \,\middle|\,\F_n
 \right)^{1/2}.
\]
Therefore,
\[
 \mathbb{P}\left(
   \mathcal{D}_N^2\ge\frac{H}{2}
   \,\middle|\,\F_n
 \right)
 \ge
 \frac{H^2}{4\E_n[\mathcal{D}_N^4]}
 \ge\frac{1}{4K}
 =:p_0>0.
\]
Enlarge $L$, still keeping it divisible by $8$, so that $\kappa L\ge8$.
Then $H/2\ge\kappa m/2\ge4c^2$ on $\{\sigma_c>n\}$, and hence
\[
\mathbb{P}\bigl(|\mathcal{D}_{(j+1)m}|\ge2c\mid\F_{jm}\bigr)\ge p_0
 \qquad\text{on }\{\sigma_c>jm\},\qquad j\ge1.
\]
Using
$\{\sigma_c>(j+1)m\}\subset
\{\sigma_c>jm\}\cap\{|\mathcal{D}_{(j+1)m}|<2c\}$ and iterating, we obtain
\begin{equation}\label{eq:tail}
 \mathbb{P}(\sigma_c>jm)\le(1-p_0)^{j-1},\qquad j\ge1.
\end{equation}
Finally, for every $p>1$, integration of this tail bound yields
\[
 \E\left[\left(\frac{\sigma_c}{m}\right)^p\right]
 =p\int_0^\infty t^{p-1}\mathbb{P}(\sigma_c>tm) dt
 \le1+\sum_{j=1}^{\infty}\bigl((j+1)^p-j^p\bigr)(1-p_0)^{j-1}<\infty,
\]
uniformly in $c$. Since $m=Lc^2$, this proves \eqref{eq:uniform-moment};
uniform integrability follows from the uniform $L^p$ bound.
\end{proof}

\begin{lemma}
\label{lem:exit-selfsimilar-sup}
Let $\{V_t\}_{t\ge0}$ be a continuous $H$-self-similar process with
$H>0$, that is,
\[
\{V_{ct}\}_{t\ge0}
\stackrel{\mathrm{law}}{=}
\{c^H V_t\}_{t\ge0}
\qquad\text{for every }c>0.
\]
Define $M:=\sup_{0\le u\le1}|V_u|$ and $\tau^{(\ell)}:=\inf\{t>0:|V_t|\ge \ell\}$ for every $\ell>0$. Suppose that $\Prob (M>0)=1$.
Then
\begin{equation*}
\tau^{(\ell)}
\stackrel{\mathrm{law}}{=}
\left(\frac{\ell}{M}\right)^{1/H}.
\end{equation*}
\end{lemma}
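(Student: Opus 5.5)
The plan is to compare distribution functions: for every $t>0$, the event $\{\tau^{(\ell)}\le t\}$ should coincide, up to a null set, with an event defined by the running supremum of $|V|$, and self-similarity then transfers that supremum to the unit interval. Write $M_t:=\sup_{0\le u\le t}|V_u|$, so that $M=M_1$.

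First, continuity of paths gives
\[
  \{\tau^{(\ell)}\le t\}=\{M_t\ge \ell\},\qquad t>0.
\]
If $\tau^{(\ell)}\le t$, then $|V_{\tau^{(\ell)}}|\ge\ell$ by continuity (the infimum is attained), so $M_t\ge\ell$. Conversely, the supremum of a continuous function on $[0,t]$ is attained, so $M_t\ge\ell$ yields some $u\le t$ with $|V_u|\ge\ell$, and hence $\tau^{(\ell)}\le u\le t$. One small point needs care: if $|V_0|\ge\ell$, the infimum over $t>0$ still equals $0$ by continuity, so the identity holds there as well.

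Second, self-similarity with constant $c=t$ gives $\{V_{tu}\}_{u\ge0}\stackrel{\mathrm{law}}{=}\{t^HV_u\}_{u\ge0}$. The supremum over $u\in[0,1]$ of the absolute value is a measurable functional of a continuous path, since it equals the supremum over rational $u$. Applying this functional to both sides yields $M_t\stackrel{\mathrm{law}}{=}t^HM$. Therefore
\[
  \Prob(\tau^{(\ell)}\le t)=\Prob(t^HM\ge\ell)=\Prob\bigl((\ell/M)^{1/H}\le t\bigr),
\]
where the last equality uses $M>0$ almost surely and $H>0$, so that $t^HM\ge\ell$ if and only if $(\ell/M)^{1/H}\le t$. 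If $M$ may be infinite, we set $\ell/M=0$; this causes no difficulty, since $M<\infty$ anyway by continuity.

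Third, since the distribution functions agree for all $t>0$, the two $[0,\infty]$-valued random variables have the same law. Agreement on $(0,\infty)$ determines the law, including the mass at $\infty$ by complement and the mass at $0$ by right-continuity. Note that $\Prob(\tau^{(\ell)}=\infty)=\lim_t\Prob(M_t<\ell)$, which corresponds to $\{M=0\}$ and is excluded by hypothesis. The main obstacle is only the bookkeeping in the first step, namely the non-strict inequality $\ge\ell$ in the definition of $\tau^{(\ell)}$ and the attainment of the infimum. Both are handled by path continuity; no finer regularity, such as crossing behaviour at level $\ell$, is needed, because the statement is purely about laws.
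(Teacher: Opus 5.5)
Your proposal is correct and takes essentially the same route as the paper. Path continuity gives $\{\tau^{(\ell)}\le t\}=\{\sup_{0\le s\le t}|V_s|\ge\ell\}$, self-similarity gives $\sup_{0\le s\le t}|V_s|\stackrel{\mathrm{law}}{=}t^HM$, and the distribution functions are then compared. One small correction: your remark that $|V_0|\ge\ell$ forces $\tau^{(\ell)}=0$ fails when $|V_0|=\ell$ exactly. This is harmless, because $V_0\stackrel{\mathrm{law}}{=}c^HV_0\to0$ as $c\downarrow0$ forces $V_0=0$ a.s., so the event identity holds up to a null set, as you anticipated.
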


\begin{proof}
For every $t>0$, by continuity of $V$, $\{\tau^{(\ell)}\le t\}
=\left\{\sup_{0\le s\le t}|V_s|\ge \ell\right\}$.
By the change of variables $s=tu$ and the $H$-self-similarity of $V$,
\[
\sup_{0\le s\le t}|V_s|
=
\sup_{0\le u\le1}|V_{tu}|
\stackrel{\mathrm{law}}{=}
t^H\sup_{0\le u\le1}|V_u|
=
t^H M.
\]
Therefore,
\begin{align*}
\Prob(\tau^{(\ell)}\le t)=
\Prob\left(
\sup_{0\le s\le t}|V_s|\ge \ell
\right)=
\Prob\left(
\left(\frac{\ell}{M}\right)^{1/H}\le t
\right).
\end{align*}
Thus $\tau^{(\ell)}$ and $(\ell/M)^{1/H}$ have the same distribution.
\end{proof}

\begin{theorem}
\label{prop:exit-expectation}
The following statements hold.
\begin{enumerate}
\item
For every $q>0$, $\E[\tau_{\alpha_1,\alpha_2}^q]<\infty.$
In particular, $m_{\alpha_1,\alpha_2}:=\E[\tau_{\alpha_1,\alpha_2}]<\infty$.
\item
For the limiting process $V$, put $M:=\sup_{0\le u\le1}|V_u|$ and $\tau^{(\ell)}:=\inf\{t>0:|V_t|\ge \ell\}$ for every $\ell>0$.
Then 
\[
\E[\tau^{(\ell)}]=\ell^2\E[M^{-2}].
\]
In particular, $m_{\alpha_1,\alpha_2}=4\E[M^{-2}]$.
\item
For every $q>0$, 
as $c\to\infty$,
\begin{equation}
\label{eq:sigma-c-moment-asymptotic}
\E[\sigma_c^q]
=
c^{2q}\E[\tau_{\alpha_1,\alpha_2}^q]
+
o(c^{2q}).
\end{equation}
In particular,
\begin{equation}
\label{eq:sigma-c-asymptotic}
    \E[\sigma_c]
    =
    m_{\alpha_1,\alpha_2}\,c^2
    +
    o(c^2).
\end{equation}
\end{enumerate}
\end{theorem}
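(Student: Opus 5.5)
The natural order is (1) from (3) combined with uniform integrability, and (2) from the self-similarity lemma. The one genuinely delicate point is the uniform integrability needed for (3).

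\emph{Step 1 (moments of the limit, and part (3)).} By \Cref{prop:exit-convergence}, $\sigma_c/c^2\Rightarrow\tau_{\alpha_1,\alpha_2}$. Fix $q>0$ and choose $p>\max\{1,q\}$. \Cref{prop:uniform-moment} gives
\[
\sup_{c\ge1}\E\bigl[(\sigma_c/c^2)^{p}\bigr]\le C_p^p .
\]
By Skorokhod representation (or Fatou applied to the truncations $x\mapsto \min\{x^p,K\}$), $\E[\tau_{\alpha_1,\alpha_2}^p]\le \liminf_c \E[(\sigma_c/c^2)^p]<\infty$. Since $q<p$, this yields part (1), including $m_{\alpha_1,\alpha_2}<\infty$. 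Moreover, the family $\{(\sigma_c/c^2)^q\}_{c\ge1}$ is bounded in $L^{p/q}$ with $p/q>1$, so it is uniformly integrable. The continuous mapping theorem gives $(\sigma_c/c^2)^q\Rightarrow\tau^q$, and convergence in distribution together with uniform integrability gives $\E[(\sigma_c/c^2)^q]\to\E[\tau^q_{\alpha_1,\alpha_2}]$. Multiplying by $c^{2q}$ gives \eqref{eq:sigma-c-moment-asymptotic}, and $q=1$ gives \eqref{eq:sigma-c-asymptotic}.

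A small point: $c$ ranges over positive integers. This is harmless, since the convergence and the $L^p$ bounds are stated along that parameter.

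\emph{Step 2 (part (2)).} $V$ is continuous and centered Gaussian. By \Cref{prop:diff-cov}, $\E[V_{as}V_{at}]=a\,\E[V_sV_t]$, so equality of covariances gives $1/2$-self-similarity in law as processes.

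To check $\Prob(M>0)=1$, note that \eqref{eq:diff-variance} gives $\E[V_1^2]>0$. In the case $\mu<1/4$ this needs $1-\alpha_1-\alpha_2+(\alpha_1-\alpha_2)^2>0$, which holds because the expression equals $2(1-\lambda)\cdot(\ldots)$ positive, or more directly because $V_t=\dd^\top B_t+R_t$ has nondegenerate local Brownian part. Hence $\Prob(M=0)\le\Prob(V_1=0)=0$.

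\Cref{lem:exit-selfsimilar-sup} with $H=1/2$ then gives $\tau^{(\ell)}\stackrel{\mathrm{law}}{=}\ell^2M^{-2}$, so $\E[\tau^{(\ell)}]=\ell^2\E[M^{-2}]$. This is finite by part (1), because $\tau^{(\ell)}\stackrel{\mathrm{law}}{=}(\ell/2)^2\tau^{(2)}$. Taking $\ell=2$ gives $m_{\alpha_1,\alpha_2}=4\E[M^{-2}]$.

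\emph{Main obstacle.} The only delicate step is the uniform integrability in Step 1: one must use an exponent $p>q$, which \Cref{prop:uniform-moment} supplies for every $p$. If one preferred not to rely on the general $L^p$ bound, the route would be to use the geometric tail \eqref{eq:tail} directly. That bound holds for $j\ge1$ with $m=Lc^2$, so $\Prob(\sigma_c/c^2>Ljt)$ decays geometrically uniformly in $c$, which gives all moments at once.
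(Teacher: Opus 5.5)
Your proposal is correct and matches the paper's proof in substance. As in the paper, you use the $L^p$ bound of \Cref{prop:uniform-moment} with $p>\max\{1,q\}$ to get uniform integrability of $\{(\sigma_c/c^2)^q\}$, and combine it with \Cref{prop:exit-convergence} to obtain (3) and hence (1); you then derive (2) from $1/2$-self-similarity, $\E[V_1^2]>0$ and \Cref{lem:exit-selfsimilar-sup}. The only cosmetic difference is that you establish $\E[\tau_{\alpha_1,\alpha_2}^p]<\infty$ separately via Fatou, whereas the paper reads integrability of the limit off the uniform-integrability convergence theorem (Billingsley, Theorem~3.5).
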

\begin{proof}
Fix $q>0$ and choose $p>\max\{1,q\}$.
We first prove~(3).
By~\Cref{prop:uniform-moment},
\[
\sup_{c\ge1}
\E\left[
\left\{
\left(\frac{\sigma_c}{c^2}\right)^q
\right\}^{p/q}
\right]
=
\sup_{c\ge1}
\E\left[
\left(\frac{\sigma_c}{c^2}\right)^p
\right]
<\infty.
\]
Since $p/q>1$, the family
$
\left\{
\left(\sigma_c/c^2\right)^q
\right\}_{c\ge1}
$
is uniformly integrable.
Hence, by~\Cref{prop:exit-convergence}, the continuous mapping theorem,
and~\cite[Theorem~3.5]{billingsley1999convergence},
$
\lim_{c\to\infty}
\E\left[
\left(\frac{\sigma_c}{c^2}\right)^q
\right]
=
\E[\tau_{\alpha_1,\alpha_2}^q],
$
which proves~(3). In particular,
$
\E[\tau_{\alpha_1,\alpha_2}^q]<\infty,
$
which proves~(1). Taking $q=1$ gives~\eqref{eq:sigma-c-asymptotic}.

Finally, we prove~(2). By~\Cref{prop:diff-cov},
$\{V_t\}_{t\ge0}$ is a $1/2$-self-similar process. Moreover,
by~\Cref{prop:diff-cov}, $\E[V_1^2]>0$. Hence, $V_1$ is a
non-degenerate centered Gaussian, and consequently,
$\mathbb{P}(M>0)=1$.
\Cref{lem:exit-selfsimilar-sup} therefore yields
$
\tau^{(\ell)}\overset{\mathrm{law}}{=}\ell^2M^{-2}.
$
Taking $\ell=2$ and using part~(1), we obtain
$
\E[M^{-2}]
<\infty.
$
Hence,
$
\E[\tau^{(\ell)}]
=
\ell^2\E[M^{-2}].
$
This completes the proof.
\end{proof}

\begin{remark}
\label{rem:exit-expectation}
(i) By the covariance form in~\Cref{prop:diff-cov}, we have $m_{\alpha_1,\alpha_2}=m_{\alpha_2,\alpha_1}$.

(ii) If $\alpha_2=0$ and $\alpha_1\in\{0,1\}$, then $V\stackrel{\mathrm{law}}{=}\sqrt{2}\,B$ for a
standard Brownian motion $B$. 
Since the expected first exit time of $B$ from $(-a,a)$ is $a^2$, we have $m_{\alpha_1,0}=2$ (see also~\Cref{fig:expected_exit_time_curves_selfsimilar_fixmu}) and
$
  \E[\sigma_c]=2c^2+o(c^2).
$
\end{remark}

We conclude this section by establishing monotonicity results for the
limiting exit times.
The proof is based on Anderson's inequality, which allows us to compare
the corresponding Gaussian measures.
\begin{lemma}[Anderson's inequality~\cite{anderson1955integral}]
\label{lem:anderson}
Let $P$ be a centered Gaussian measure on $\R^d$, and let $A\subset\R^d$ be a convex, centrally symmetric Borel set. Then for any $x\in\R^d$, $P(A+x)\le P(A)$.
\end{lemma}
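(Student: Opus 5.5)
The plan is the classical proof via Gaussian symmetrization. First I reduce to the case where $P$ has a density, then apply Brunn--Minkowski on the slices $\{y: f(y)\ge u\}$. Anderson's original theorem holds for any unimodal symmetric density, and the Gaussian case follows from it.

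\emph{Reduction to full rank.} A centered Gaussian measure $P$ on $\R^d$ is supported on a linear subspace $E=\operatorname{range}$ of its covariance, and has a nondegenerate Gaussian density on $E$. Write $x=x_E+x_\perp$ with $x_E\in E$. Then
\[
P(A+x)=P\bigl((A+x)\cap E\bigr)=P_E\bigl(A_{x_\perp}+x_E\bigr),
\qquad A_{x_\perp}:=\{y\in E: y+x_\perp\in A\}.
\]
The slice $A_{x_\perp}$ is convex but not symmetric, so I compare it with the symmetric slice $A_0=A\cap E$. By central symmetry and convexity, $\tfrac12(A_{x_\perp}+A_{-x_\perp})\subset A_0$, and $A_{-x_\perp}=-A_{x_\perp}$. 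The cleanest route is therefore to prove the stronger statement $P(K+x)\le P(\tfrac12(K-K))$ for convex $K$, which covers both the translation and the slicing at once. Concretely, with $K:=A_{x_\perp}+x_E\subset E$, it suffices to show $P_E(K)\le P_E(\tfrac12(K-K))\le P_E(A_0)\le P(A)$.

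\emph{Main step (density case).} On $E$, $P_E$ has density $f(y)=c\,e^{-\langle y,\Gamma^{-1}y\rangle/2}$. For each $u>0$ the superlevel set $L_u=\{f\ge u\}$ is a centered ellipsoid, so it is convex and symmetric. By the layer-cake formula,
\[
P_E(K)=\int_0^\infty \lambda\bigl(K\cap L_u\bigr)\,du,
\]
where $\lambda$ is Lebesgue measure on $E$. Set $C_u:=K\cap L_u$. Then $-C_u=(-K)\cap L_u$ by symmetry of $L_u$, and by convexity of $L_u$,
\[
\tfrac12(C_u+(-C_u))\subset \tfrac12(K-K)\cap L_u.
\]
Brunn--Minkowski gives $\lambda(\tfrac12 C_u+\tfrac12(-C_u))^{1/k}\ge \tfrac12\lambda(C_u)^{1/k}+\tfrac12\lambda(-C_u)^{1/k}=\lambda(C_u)^{1/k}$. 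Hence $\lambda(K\cap L_u)\le \lambda(\tfrac12(K-K)\cap L_u)$ for every $u$, and integrating over $u$ yields $P_E(K)\le P_E(\tfrac12(K-K))$.

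\emph{Identifying $\tfrac12(K-K)$.} Finally,
\[
\tfrac12(K-K)=\tfrac12\bigl(A_{x_\perp}-A_{x_\perp}\bigr)=\tfrac12\bigl(A_{x_\perp}+A_{-x_\perp}\bigr)\subset A_0,
\]
using convexity and symmetry of $A$: if $y+x_\perp,\ -y'+x_\perp\in A$, then $-(-y'+x_\perp)=y'-x_\perp\in A$ and the midpoint $(y+y')/2\in A\cap E$. Chaining the inequalities gives $P(A+x)\le P(A)$.

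The main technical obstacle is measurability and Brunn--Minkowski when $C_u$ is empty or of lower dimension. When $C_u=\emptyset$ the inequality is trivial. Lower-dimensional convex sets have measure zero, so the inequality is again trivial. Convex sets are Lebesgue measurable, since their boundary is null. Because $P$ is a Gaussian measure, restricting to convex $A$ avoids all of these measurability issues.
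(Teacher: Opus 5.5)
Your proof is correct. The paper gives no proof of this lemma; it only cites Anderson (1955). Your main step is essentially Anderson's original argument, specialized to the Gaussian density: the layer-cake formula over the convex symmetric superlevel sets $L_u$, followed by Brunn--Minkowski applied to $K\cap L_u$ and its reflection, gives $P_E(K)\le P_E(\tfrac12(K-K))$.

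What you add is the reduction to the range $E$ of the covariance via the inclusion $\tfrac12(K-K)\subset A\cap E$, and this is a genuine gain. Anderson's theorem is formulated for measures with a symmetric unimodal Lebesgue density. \Cref{cor:anderson-cov}, however, invokes the lemma for $P_{\Sigma_1}$ with an arbitrary positive semidefinite $\Sigma_1$. The shift there is the value of $H$, which need not lie in the range of $\Sigma_1$. In that case the slice $(A+x)\cap E$ is in general not a translate of a symmetric set, and your $\tfrac12(K-K)$ device is exactly what handles it.

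Three small points should be fixed.
\begin{itemize}
\item With your definition of $A_{x_\perp}$, one actually has $(A+x)\cap E=A_{-x_\perp}+x_E$. This is harmless, since $A_{-x_\perp}=-A_{x_\perp}$ and everything afterwards is insensitive to the sign.
\item The case $\dim E=0$, i.e.\ $P=\delta_0$, should be treated separately, because the exponent $1/k$ is meaningless there. It is immediate: $-x\in A$ forces $x\in A$ and hence $0\in A$.
\item $\tfrac12(K-K)$ need not be Borel. Being convex, it is Lebesgue measurable, so evaluate $P_E$ on it through its completion; this is legitimate since $P_E$ is absolutely continuous on $E$.
\end{itemize}
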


\begin{corollary}
\label{cor:anderson-cov}
Let $P_\Sigma$ denote the centered Gaussian measure on $\R^d$ with covariance matrix $\Sigma$.
If $\Sigma_1\preceq\Sigma_2$ in the Loewner order (i.e.\ $\Sigma_2-\Sigma_1$ is positive semidefinite)
and $A\subset\R^d$ is a convex, centrally symmetric Borel set, then $P_{\Sigma_2}(A)\le P_{\Sigma_1}(A)$.
\end{corollary}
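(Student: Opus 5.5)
The plan is to realise the two Gaussian laws as a convolution and then apply Anderson's inequality (\Cref{lem:anderson}) after conditioning on the added independent part. Set $\Delta:=\Sigma_2-\Sigma_1\succeq0$. Let $X\sim P_{\Sigma_1}$ and $Y\sim P_\Delta$ be independent centered Gaussian vectors on $\R^d$. If $\Delta$ is singular, $P_\Delta$ is a degenerate Gaussian measure supported on the range of $\Delta$; this is harmless. Since covariances of independent vectors add, $X+Y$ is a centered Gaussian vector with covariance $\Sigma_1+\Delta=\Sigma_2$. Hence $P_{\Sigma_2}(A)=\Prob(X+Y\in A)$.

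Next, I will condition on $Y$. By independence and Fubini's theorem,
\[
  \Prob(X+Y\in A)=\int_{\R^d}\Prob(X\in A-y)\,P_\Delta(dy)=\int_{\R^d}P_{\Sigma_1}(A-y)\,P_\Delta(dy).
\]
For each fixed $y$, the set $A-y=A+(-y)$ is a translate of the convex, centrally symmetric Borel set $A$. So \Cref{lem:anderson}, applied with the centered Gaussian measure $P_{\Sigma_1}$, gives $P_{\Sigma_1}(A-y)\le P_{\Sigma_1}(A)$ for every $y$. Integrating this bound against the probability measure $P_\Delta$ yields $P_{\Sigma_2}(A)\le P_{\Sigma_1}(A)$.

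The only step needing care is that \Cref{lem:anderson} may be used when $\Sigma_1$ is singular. As stated in the paper, the lemma covers arbitrary centered Gaussian measures, degenerate ones included. If one prefers to rely only on the nondegenerate case, there is a short fix. Replace $\Sigma_1$ and $\Sigma_2$ by $\Sigma_1+\epsilon I$ and $\Sigma_2+\epsilon I$; the difference is still $\Delta\succeq0$, so the argument above applies. Then let $\epsilon\downarrow0$. In that limit the measures converge weakly, and the boundary of a convex set has Lebesgue measure zero. If $A$ has nonempty interior, this limit passage is justified at least when the limit measures are absolutely continuous. In general one would instead argue by closures and interiors, which is more delicate, so the cleanest route is to invoke \Cref{lem:anderson} directly for possibly degenerate measures, as stated.
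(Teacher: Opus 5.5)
Your proof is correct and is essentially the paper's argument: write $\Sigma_2=\Sigma_1+\Delta$, realise $P_{\Sigma_2}$ as the law of $X+Y$ with independent $X\sim P_{\Sigma_1}$ and $Y\sim P_\Delta$, condition on $Y$, apply \Cref{lem:anderson} to the translate $A-y$, and integrate. Your closing discussion of singular $\Sigma_1$ is not needed, since the lemma as stated already covers arbitrary (possibly degenerate) centered Gaussian measures.
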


\begin{proof}
Write $\Sigma_2=\Sigma_1+\Gamma$ with $\Gamma\succeq0$. If $G\sim P_{\Sigma_1}$ and $H\sim P_\Gamma$ are
independent, then $G+H\sim P_{\Sigma_2}$. Conditioning on $H=x$ and applying
\Cref{lem:anderson} yields
$\Prob(G+H\in A\mid H=x)=P_{\Sigma_1}(A-x)\le P_{\Sigma_1}(A)$.
Taking expectations gives the claim.
\end{proof}

For a fixed $\mu<1/4$, define
\[
\Lambda_\mu
:=
\left\{
\frac{\nu(1-\nu)}{1-4\mu}
:
\alpha_1,\alpha_2\in[-1,1],\;\mu=\alpha_1\alpha_2 ,\; \nu=\alpha_1+\alpha_2
\right\}.
\]
For each \(\lambda\in\Lambda_\mu\), let
\(\{V^{(\mu,\lambda)}_t\}_{t\ge0}\)
denote a centered continuous Gaussian process with $V^{(\mu,\lambda)}_0=0$ and covariance kernel given in~\Cref{prop:diff-cov}. Furthermore, define $\tau_{\mu,\lambda}:=\inf\left\{t>0:|V_t^{(\mu,\lambda)}|\ge2\right\}$.
For nonnegative random variables $X$ and $Y$, 
$X\le_{\mathrm{st}}Y$ means $\Prob(X>T)\le\Prob(Y>T)$ 
for every $T\ge0$.
\begin{theorem}
\label{prop:expect-tau}
The following stochastic comparisons hold.
\begin{enumerate}
\item
Fix $-1\le\mu<1/4$.
If $\lambda,\lambda'\in\Lambda_\mu$ satisfy $\lambda\le\lambda'$, then
\[
\tau_{\mu,\lambda}
\le_{\mathrm{st}}
\tau_{\mu,\lambda'}.
\]
\item
If $-1/2<\alpha\le\alpha'<1$, then
\[
\tau_{\alpha,\alpha}
\le_{\mathrm{st}}
\tau_{\alpha',\alpha'}.
\]
\end{enumerate}
In particular, the same orders hold for expectations and all positive moments.
\end{theorem}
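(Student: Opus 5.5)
The plan is to reduce both comparisons to a Loewner-order comparison of finite-dimensional covariance matrices, and then apply \Cref{cor:anderson-cov}. For $T>0$ and a finite grid $0<t_1<\dots<t_k\le T$, the event $\{\max_i|V_{t_i}|<2\}$ is a convex, centrally symmetric box in $\R^k$. By continuity of paths, $\{\tau>T\}$ is, up to null sets, the decreasing limit over refining dyadic grids of such events, since $\{\sup_{[0,T]}|V|<2\}$ and $\{\sup_{[0,T]}|V|\le2\}$ differ by a null set. That they differ only by a null set follows as in the proof of \Cref{prop:exit-convergence}: a.s.\ the process crosses the level right after $\tau$. It is therefore enough to show that, for every grid, the covariance matrix $K$ of $(V_{t_1},\dots,V_{t_k})$ is monotone in the Loewner order in the right direction. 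Concretely, a larger covariance should give a smaller probability of staying inside, hence a stochastically smaller $\tau$. Once stochastic order is established, the claim for moments follows from $\E[X^q]=q\int_0^\infty T^{q-1}\Prob(X>T)\,dT$.

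For (1), fix $\mu$. By \Cref{prop:diff-cov} the kernel is affine in $\lambda$: $K_\lambda(s,t)=2r\,c_\mu(\log(q/r))-\lambda C_\mu(s,t)$. The key step is to show that $C_\mu$ is a positive semidefinite kernel on $(0,\infty)$. Then $K_\lambda-K_{\lambda'}=(\lambda'-\lambda)C_\mu\succeq0$ for $\lambda\le\lambda'$. Hence $V^{(\mu,\lambda)}$ has the larger covariance, its probability of staying inside is smaller, and $\tau_{\mu,\lambda}\le_{\mathrm{st}}\tau_{\mu,\lambda'}$, as desired. To prove that $C_\mu$ is positive semidefinite, I would use the Lamperti transform $t=e^u$. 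Then $r^{-1/2}q^{-1/2}C_\mu = e^{-h/2}(2c_\mu(h)+s_\mu(h))$ with $h=|u-v|$, so it suffices to show that $g(h)=e^{-h/2}(2c_\mu(h)+s_\mu(h))$ is a positive definite function on $\R$. Its Fourier transform can be computed explicitly, since $g$ is a combination of $e^{-(1/2\mp\sqrt\mu)|h|}$ (or damped cosines when $\mu<0$). Bochner's theorem then reduces the claim to nonnegativity of a rational function. Alternatively, I would identify $C_\mu$ directly as the covariance of a suitable Wiener integral $\int_0^t f^\top(t/s)^{A}\,dB_s$, for example by recognising $C_\mu=-\partial_\lambda K_\lambda$ as $\dd^\top$-type covariances with the roles of $\Sigma$ split. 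I expect this positivity verification, especially checking nonnegativity of the spectral density uniformly over the sign cases of $\mu$, to be the main obstacle.

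For (2), \eqref{eq:diff-cov-symmetric} gives $K_\alpha(s,t)=\frac{2}{1+2\alpha}s^{1+\alpha}t^{-\alpha}$ for $s\le t$, which in Lamperti coordinates is $\frac{2}{1+2\alpha}e^{-(\alpha+1/2)|u-v|}\sqrt{st}$. Writing $\beta=\alpha+1/2>0$, the stationary kernel $\frac{2}{2\beta}e^{-\beta|h|}$ has spectral density proportional to $\frac{1}{\beta}\cdot\frac{2\beta}{\beta^2+\xi^2}=\frac{2}{\beta^2+\xi^2}$. This is pointwise decreasing in $\beta$. Hence $K_\alpha-K_{\alpha'}\succeq0$ for $\alpha\le\alpha'$, because a difference of kernels with nonnegative spectral density difference is positive definite, and multiplying by $\sqrt{st}$ preserves positive semidefiniteness. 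Anderson's inequality then gives $\Prob(\tau_{\alpha,\alpha}>T)\le\Prob(\tau_{\alpha',\alpha'}>T)$.
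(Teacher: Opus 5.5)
Your argument is correct and has the same skeleton as the paper's. The kernel is affine in $\lambda$ with $K_\lambda-K_{\lambda'}=(\lambda'-\lambda)C_\mu$; finite-dimensional covariances are compared in the Loewner order; \Cref{cor:anderson-cov} is applied to boxes; and dyadic grids plus path continuity pass to $\{\tau>T\}$. Your part (2) is essentially identical to the paper's: Lamperti transform, spectral density $2/((\alpha+\tfrac12)^2+\xi^2)$ decreasing in $\alpha$, then Bochner.

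The real difference is how you show $C_\mu\succeq0$. The paper, like your fallback suggestion, realises $C_\mu$ as the covariance of the Wiener integral $U^{(\mu)}_t=\sqrt{1-4\mu}\int_0^t s_\mu(\log(t/r))\,dB_r$. It then evaluates the resulting Laplace-type integrals separately for $\mu>0$, $\mu=0$ and $\mu<0$. Your primary Bochner route, which you flagged as the main obstacle, does go through, and uniformly in $\mu$. Let $a,b$ be the two roots of $x^2-x+\tfrac14-\mu$ (real for $\mu>0$, complex conjugate for $\mu<0$). Then $g(h)=2\bigl(b\,e^{-a|h|}-a\,e^{-b|h|}\bigr)/(b-a)$ for $\mu\neq0$, and hence $\int_{\R}e^{-i\xi h}g(h)\,dh=(1-4\mu)/\bigl((a^2+\xi^2)(b^2+\xi^2)\bigr)$. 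This is positive because $ab=\tfrac14-\mu>0$ and, for $\mu<0$, the denominator equals $|a^2+\xi^2|^2$. For $\mu=0$ one gets $(\tfrac14+\xi^2)^{-2}$. This density is exactly the squared modulus of the Fourier transform of $x\mapsto\sqrt{1-4\mu}\,e^{-x/2}s_\mu(x)$ on $(0,\infty)$, which is the Lamperti-time kernel of the paper's $U^{(\mu)}$. So the two proofs are dual: the paper's gives an explicit Gaussian realisation, while yours avoids the case-by-case integrals and makes nonnegativity transparent. You do still need to write this computation out.

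One small point in the limiting step. With open boxes, the decreasing limit over grids only lands between $\{\sup_{[0,T]}|V|<2\}$ and $\{\sup_{[0,T]}|V|\le2\}$. Closing that gap via the crossing property also requires $\Prob(\tau=T)\le\Prob(|V_T|=2)=0$. The paper sidesteps this by using closed boxes $[-\ell,\ell]^n$ and letting $\ell\uparrow2$, which needs no crossing property at all.
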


\begin{proof}
We first prove~(1). We begin by showing that $C_\mu$ in~\Cref{prop:diff-cov}
is a covariance kernel. Let $B$ be a standard Brownian motion and set
\[
  U_t^{(\mu)}
  :=
  \sqrt{1-4\mu}
  \int_0^t
  s_\mu\!\left(\log\frac{t}{r}\right)\,dB_r.
\]
The stochastic integral is well-defined since, by
\eqref{function:convenient_e-matrix},
\[
  |s_\mu(x)|
  =
  O\!\left((1+x)e^{\sqrt{\mu_+}x}\right),
  \qquad
  \mu_+:=\max\{\mu,0\},
\]
and $2\sqrt{\mu_+}<1$.

For $0<s\le t$, It\^o's isometry and the change of variables
$r=se^{-x}$ give, with $h=\log(t/s)$,
\[
  \E[U_s^{(\mu)}U_t^{(\mu)}]
  =
  (1-4\mu)s
  \int_0^\infty
  e^{-x}s_\mu(x)s_\mu(x+h)\,dx.
\]
Using
$
  s_\mu(x+h)
  =
  s_\mu(x)c_\mu(h)+c_\mu(x)s_\mu(h),
$
we compute the two integrals appearing above. Suppose first that
$\mu>0$ and put $\beta=\sqrt{\mu}\in(0,1/2)$. Then
\[
  s_\mu(x)=\frac{\sinh(\beta x)}{\beta},
  \qquad
  c_\mu(x)=\cosh(\beta x).
\]
Since
\[
  \int_0^\infty e^{-x}\sinh(2\beta x)\,dx
  =
  \frac{2\beta}{1-4\beta^2},
\]
we obtain
\[
  \int_0^\infty e^{-x}c_\mu(x)s_\mu(x)\,dx
  =
  \frac{1}{2\beta}
  \int_0^\infty e^{-x}\sinh(2\beta x)\,dx
  =
  \frac{1}{1-4\mu}.
\]
Moreover,
\[
  \int_0^\infty e^{-x}\sinh^2(\beta x)\,dx
  =
  \frac12
  \left(
    \frac{1}{1-4\beta^2}-1
  \right)
  =
  \frac{2\mu}{1-4\mu},
\]
and hence
\[
  \int_0^\infty e^{-x}s_\mu(x)^2\,dx
  =
  \frac{2}{1-4\mu}.
\]
The cases $\mu=0$ and $\mu<0$ follow similarly, using
$s_0(x)=x$ and the corresponding trigonometric expressions,
respectively. Together with
$
  c_\mu(x)^2-\mu s_\mu(x)^2=1,
$
these identities yield
\[
  \E[U_s^{(\mu)}U_t^{(\mu)}]
  =
  s\bigl(2c_\mu(h)+s_\mu(h)\bigr)
  =
  C_\mu(s,t),
\]
so $C_\mu$ is a covariance kernel.

Now let $\lambda,\lambda'\in \Lambda_\mu$ with $\lambda<\lambda'$.
Define $K_{\mu,\lambda}(s,t):=\E\bigl[V^{(\mu,\lambda)}_sV^{(\mu,\lambda)}_t\bigr]$. 
For any $0<t_1<\cdots<t_n$, set
$
K_{\mu,\lambda}^{(n)}
:=
\bigl(K_{\mu,\lambda}(t_i,t_j)\bigr)_{1\le i,j\le n}.
$
By \Cref{prop:diff-cov},
\[
K_{\mu,\lambda}^{(n)}
-
K_{\mu,\lambda'}^{(n)}
=
(\lambda'-\lambda)
\bigl(C_\mu(t_i,t_j)\bigr)_{1\le i,j\le n}
\succeq0.
\]
Hence $K_{\mu,\lambda'}^{(n)}\preceq K_{\mu,\lambda}^{(n)}$
in the Loewner order.
Moreover,  $(V_{t_1}^{(\mu,\lambda)},\ldots,V_{t_n}^{(\mu,\lambda)})
\sim N(0,K_{\mu,\lambda}^{(n)})$ and
the set $[-\ell,\ell]^n$, $\ell>0$, is convex and symmetric. Hence
\Cref{cor:anderson-cov} yields
\begin{align}
\Prob\bigl(
|V_{t_1}^{(\mu,\lambda')}|\le \ell,\ldots,
|V_{t_n}^{(\mu,\lambda')}|\le \ell
\bigr)
\ge
\Prob\bigl(
|V_{t_1}^{(\mu,\lambda)}|\le \ell,\ldots,
|V_{t_n}^{(\mu,\lambda)}|\le \ell
\bigr).
\label{eq:anderson-fdd-general}
\end{align}
Fix $T>0$ and let
$
\Pi_m
:=
\left\{
kT/2^m:k=0,\ldots,2^m
\right\}.
$
Applying \eqref{eq:anderson-fdd-general} to the positive time points of
$\Pi_m$, and then letting $m\to\infty$, path-continuity yields
\[
\Prob\left(
\sup_{0\le s\le T}
|V_s^{(\mu,\lambda')}|
\le \ell
\right)
\ge
\Prob\left(
\sup_{0\le s\le T}
|V_s^{(\mu,\lambda)}|
\le \ell
\right).
\]
Now let $\ell_m:=2-1/m,\ m\ge1$.
Since
\[
\left\{
\sup_{0\le s\le T}
|V_s^{(\mu,\lambda)}|
<2
\right\}
=
\bigcup_{m=1}^\infty
\left\{
\sup_{0\le s\le T}
|V_s^{(\mu,\lambda)}|
\le \ell_m
\right\}
=
\{\tau_{\mu,\lambda}>T\},
\]
and similarly for $V^{(\mu,\lambda')}$, continuity of probability from below
implies the first assertion.

We next prove~(2).
Suppose that $-1/2<\alpha<\alpha'<1$.
By~\eqref{eq:diff-cov-symmetric}, for every
$\alpha\in(-1/2,1)$ and $0<s\le t$,
\[
  \E\!\left[V_s^{(\alpha)}V_t^{(\alpha)}\right]
  =
  \frac{2s}{2\alpha+1}
  \left(\frac{t}{s}\right)^{-\alpha}.
\]

Define the Lamperti transform
$
  X_u^{(\alpha)}
  :=
  e^{-u/2}V_{e^u}^{(\alpha)}
$
for $u\in \R$.
Then
\[
  \E\!\left[X_u^{(\alpha)}X_v^{(\alpha)}\right]
  =
  \frac{1}{\alpha+\frac12}
  e^{-(\alpha+\frac12)|u-v|},
\]
so $X^{(\alpha)}$ is a stationary Gaussian process with 
covariance function
\[
  r_\alpha(h)
  :=
  \E\!\left[X_0^{(\alpha)}X_h^{(\alpha)}\right]
  =
  \frac{1}{2\pi}
  \int_{\R} e^{i\xi h}f_\alpha(\xi)\,d\xi,
  \qquad
  f_\alpha(\xi)
  =
  \frac{2}
  {\left(\alpha+\frac 12\right)^2+\xi^2}.
\]
Since $\alpha<\alpha'$, we have
$
  f_{\alpha'}(\xi)\le f_\alpha(\xi)
$
for $\xi\in\R$.
Hence, by Bochner's theorem, the covariance kernel
$
  \E\!\left[X_u^{(\alpha)}X_v^{(\alpha)}\right]
  -
  \E\!\left[X_u^{(\alpha')}X_v^{(\alpha')}\right]
$
is positive semidefinite.

For arbitrary $0<t_1<\cdots<t_n$, let
$
  K_\alpha^{(n)}
  :=
  \operatorname{Cov}
  \bigl(
    V_{t_1}^{(\alpha)},\ldots,V_{t_n}^{(\alpha)}
  \bigr).
$
Since
$
  V_t^{(\alpha)}
  =
  \sqrt{t}\,X_{\log t}^{(\alpha)},
$
it follows that
$
  K_{\alpha'}^{(n)}
  \preceq
  K_\alpha^{(n)}
$
in the Loewner order.
Therefore, by~\Cref{cor:anderson-cov},
\[
  \Prob\bigl(
    |V_{t_1}^{(\alpha)}|\le\ell,\ldots,
    |V_{t_n}^{(\alpha)}|\le\ell
  \bigr)
  \le
  \Prob\bigl(
    |V_{t_1}^{(\alpha')}|\le\ell,\ldots,
    |V_{t_n}^{(\alpha')}|\le\ell
  \bigr)
\]
for every $\ell>0$.
Applying the same dyadic approximation and path-continuity argument
as in~(1), and then letting $\ell\uparrow2$, we obtain 
$
  \tau_{\alpha,\alpha}
  \le_{\mathrm{st}}
  \tau_{\alpha',\alpha'}.
$
This completes the proof.
\end{proof}

\Cref{prop:expect-tau} (1) can be reformulated directly in terms of
the original parameters $(\alpha_1,\alpha_2)$.
\begin{corollary}
\label{cor:exit-monotonicity-sum}
Fix $-1\le \mu<1/4$ and let $(\alpha_1,\alpha_2)$ and
$(\widetilde\alpha_1,\widetilde\alpha_2)$ be parameter pairs with $\alpha_1\alpha_2=\widetilde\alpha_1\widetilde\alpha_2=\mu$.
If $\left|\alpha_1+\alpha_2-1/2\right|\le\left|
\widetilde\alpha_1+\widetilde\alpha_2-1/2\right|$,
then 
\[
\tau_{\widetilde\alpha_1,\widetilde\alpha_2}
\le_{\mathrm{st}}
\tau_{\alpha_1,\alpha_2}.
\]
\end{corollary}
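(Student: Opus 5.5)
The plan is to reduce the corollary to \Cref{prop:expect-tau}(1) by showing that, for fixed $\mu$, the parameter $\lambda=\nu(1-\nu)/(1-4\mu)$ is a decreasing function of $|\nu-1/2|$.

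First I will record that $\tau_{\alpha_1,\alpha_2}$ depends on $(\alpha_1,\alpha_2)$ only through the law of $V$. By \Cref{prop:diff-cov}(1), that law is determined by the covariance kernel $2r\,c_\mu(\log(q/r))-\lambda C_\mu(s,t)$, which depends only on $(\mu,\lambda)$. Since $V$ is a centered continuous Gaussian process, we obtain $\tau_{\alpha_1,\alpha_2}\overset{\mathrm{law}}{=}\tau_{\mu,\lambda}$ with $\lambda=\nu(1-\nu)/(1-4\mu)$ and $\nu=\alpha_1+\alpha_2$. The same holds for the tilde pair, with $\widetilde\lambda=\widetilde\nu(1-\widetilde\nu)/(1-4\mu)$. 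Both values lie in $\Lambda_\mu$ by definition. One caveat: the exit time is a measurable functional of a continuous path. Its law is therefore determined by the law of the process on $C([0,\infty),\R)$, which is in turn fixed by the covariance.

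Next I will use the elementary identity
\[
\nu(1-\nu)=\frac14-\Bigl(\nu-\frac12\Bigr)^2 .
\]
Under the hypothesis $|\nu-1/2|\le|\widetilde\nu-1/2|$, it gives $\widetilde\nu(1-\widetilde\nu)\le\nu(1-\nu)$. Because $1-4\mu>0$, dividing preserves the inequality, so $\widetilde\lambda\le\lambda$.

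Finally, \Cref{prop:expect-tau}(1), applied with $\widetilde\lambda\le\lambda$ in $\Lambda_\mu$, yields $\tau_{\mu,\widetilde\lambda}\le_{\mathrm{st}}\tau_{\mu,\lambda}$. By the identification of laws above, this is the claim. Stochastic order depends only on the marginal laws, so no coupling is needed.

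There is no real obstacle. The only point needing care is the identification of the two exit-time laws through the covariance kernel.
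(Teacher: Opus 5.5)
Your proposal is correct and follows the paper's proof. The identity $\nu(1-\nu)=\tfrac14-(\nu-\tfrac12)^2$ shows that, for fixed $\mu<1/4$, $\lambda$ is decreasing in $|\nu-\tfrac12|$, and \Cref{prop:expect-tau}(1) then gives the stochastic order; your explicit identification of the law of $\tau_{\alpha_1,\alpha_2}$ with that of $\tau_{\mu,\lambda}$ (the covariance kernel of $V$ depends only on $(\mu,\lambda)$) is a step the paper leaves implicit, and you handle it correctly.
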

\begin{proof}
Let $\nu=\alpha_1+\alpha_2$ and $\widetilde \nu=\widetilde\alpha_1+\widetilde\alpha_2$.
For fixed $\mu<1/4$, the parameter $\lambda\in \Lambda_\mu$ is given by
\begin{equation}
    \label{eq:lambda_completing_square}
\lambda(\nu)
=
\frac{1}{1-4\mu}
\left\{
\frac14-\left(\nu-\frac12\right)^2
\right\}.
\end{equation}
Since $1-4\mu>0$, the assumption $\left|\nu-\frac12\right|\le
\left|\widetilde \nu-\frac12\right|$ implies $\lambda(\nu)\ge\lambda(\widetilde \nu)$.
The assertion therefore follows from part~(1) of
\Cref{prop:expect-tau}.
\end{proof}

The monotonicity established in \Cref{cor:exit-monotonicity-sum} allows us
to identify parameter pairs maximizing the expected limiting exit time on
each level set of $\mu=\alpha_1\alpha_2$. We do not claim uniqueness, since
\Cref{prop:expect-tau} establishes only non-strict monotonicity.
\begin{corollary}
\label{cor:exit-maximizer}
Fix $-1\le \mu<1/4$. On the level set $\mu=\alpha_1\alpha_2$,
one maximizer of
$m_{\alpha_1,\alpha_2}$, up to interchange of the coordinates, is
\[
(\alpha_1,\alpha_2)
=
\begin{cases}
(1,\mu),
&
-1\le \mu<-1/2,
\\[3mm]
\left(
\dfrac{1+\sqrt{1-16\mu}}{4},
\dfrac{1-\sqrt{1-16\mu}}{4}
\right),
&
-1/2\le \mu\le1/16,
\\[3mm]
(\sqrt{\mu},\sqrt{\mu}),
&
1/16<\mu<1/4.
\end{cases}
\]
The same pair maximizes every positive moment of the limiting exit time. 
\end{corollary}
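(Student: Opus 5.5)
The plan is to reduce the statement to a one-variable optimization over $\nu=\alpha_1+\alpha_2$ by means of \Cref{cor:exit-monotonicity-sum}. On the level set $\alpha_1\alpha_2=\mu$, that corollary shows that $\tau_{\alpha_1,\alpha_2}$ increases in the usual stochastic order as $|\nu-\tfrac12|$ decreases. By \Cref{prop:expect-tau}, the same ordering holds for $m_{\alpha_1,\alpha_2}$ and for every positive moment, since $\E[X^q]=\int_0^\infty qT^{q-1}\Prob(X>T)\,dT$. It therefore suffices to find, for fixed $\mu$, an admissible pair $(\alpha_1,\alpha_2)\in[-1,1]^2$ with $\alpha_1\alpha_2=\mu$ whose sum $\nu$ minimizes $|\nu-\tfrac12|$. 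No uniqueness claim is needed.

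First I will describe the admissible set of sums. The values $\alpha_1,\alpha_2$ are the roots of $x^2-\nu x+\mu=0$. Hence $\nu$ is admissible if and only if $\nu^2\ge4\mu$ and both roots $(\nu\pm\sqrt{\nu^2-4\mu})/2$ lie in $[-1,1]$. I then split into the three regimes of the statement.

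\emph{Middle regime, $-1/2\le\mu\le1/16$.} Try $\nu=1/2$. The discriminant $1/4-4\mu$ is nonnegative exactly when $\mu\le1/16$, so the roots are $(1\pm\sqrt{1-16\mu})/4$. The larger root is at most $1$ iff $\sqrt{1-16\mu}\le3$, i.e.\ iff $\mu\ge-1/2$. The smaller root is at least $-1$ iff $\sqrt{1-16\mu}\le5$, which always holds because $\mu\ge-1$. So $|\nu-\tfrac12|=0$ is attained and this pair is optimal.

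\emph{Upper regime, $1/16<\mu<1/4$.} Every admissible $\nu$ satisfies $|\nu|\ge2\sqrt\mu>1/2$. Hence $|\nu-\tfrac12|\ge2\sqrt\mu-\tfrac12$, with equality only at $\nu=2\sqrt\mu$. That value is attained by the admissible double root $(\sqrt\mu,\sqrt\mu)$, because $\sqrt\mu<1/2$.

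\emph{Lower regime, $-1\le\mu<-1/2$.} The two parameters have opposite signs. Up to interchange, write $\alpha_1=t>0$ and $\alpha_2=\mu/t<0$; the constraint $|\mu/t|\le1$ forces $t\in[|\mu|,1]$. Then $\nu(t)=t+\mu/t$ has derivative $1-\mu/t^2>0$, so $\nu$ is increasing and its largest value is $\nu(1)=1+\mu<1/2$. Every admissible $\nu$ is therefore strictly below $1/2$, and $|\nu-\tfrac12|$ is minimized by the largest $\nu$, which gives the pair $(1,\mu)$.

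The only step needing real care is the lower regime. There one must check that $\nu=1/2$ is genuinely infeasible, and that the constraint $|\alpha_i|\le1$, not the discriminant, is what binds. The monotonicity of $t+\mu/t$ settles both points. After that, \Cref{cor:exit-monotonicity-sum} together with the moment ordering in \Cref{prop:expect-tau} yields the statement for the mean and for every positive moment.
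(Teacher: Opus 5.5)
Your proposal is correct and follows essentially the same route as the paper. Both arguments reduce, via the completed-square formula for $\lambda$ and the stochastic ordering of \Cref{prop:expect-tau} (packaged in \Cref{cor:exit-monotonicity-sum}), to minimizing $|\alpha_1+\alpha_2-\tfrac12|$ on the level set, and then treat the same three regimes with the same conclusions. Your version adds the feasibility checks the paper leaves implicit: the root bounds in the middle regime and the monotonicity of $t+\mu/t$ in the lower regime.
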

\begin{proof}
By~\eqref{eq:lambda_completing_square} and
\Cref{prop:expect-tau}, it suffices to minimize
$
  \left|\alpha_1+\alpha_2-\frac12\right|
$
over $\alpha_1,\alpha_2\in[-1,1]$ subject to
$\alpha_1\alpha_2=\mu$.
If $-1\le\mu<-\frac12$, the largest possible value of
$\alpha_1+\alpha_2$ is $1+\mu<1/2$, attained at $(1,\mu)$ up to
interchange.
If $-\frac12\le\mu\le\frac1{16}$, the value $1/2$ is attainable, with
$
  (\alpha_1,\alpha_2)
  =
  \left(
    \frac{1+\sqrt{1-16\mu}}4,
    \frac{1-\sqrt{1-16\mu}}4
  \right).
$
Finally, if $\frac1{16}<\mu<\frac14$, the closest admissible value to
$1/2$ is $2\sqrt\mu$, attained at
$\alpha_1=\alpha_2=\sqrt\mu$.
The assertion follows.
\end{proof}

\begin{remark}
\label{remark:exit_time_monotone_max}
(i) By~\Cref{cor:exit-monotonicity-sum}, if $\alpha_2=0$, then $m_{\alpha_1,0}$ is nondecreasing in $\alpha_1$ on $[-1,1/2]$ and nonincreasing on $[1/2,1]$. By~\Cref{cor:exit-maximizer}, $m_{\alpha_1,0}$ is maximized at
    $\alpha_1=1/2$, in agreement with the numerical results shown in~\Cref{fig:expected_exit_time_curves_selfsimilar_fixmu}.

(ii) Consider the level set $\alpha_1\alpha_2=-1/4$.
For the three parameter pairs 
\[
(\alpha_1,\alpha_2)=\left(-1,1/4\right),\ \left(-1/2,1/2\right),\ \left(-1/4,1\right),
\]
the corresponding sums are $-3/4,0,3/4$, respectively. Hence \Cref{cor:exit-monotonicity-sum} yields
\[
m_{-1,1/4}
\le
m_{-1/2,1/2}
\le
m_{-1/4,1},
\]
which is consistent with our numerical simulations in~\Cref{fig:expected_exit_time_curves_selfsimilar_fixmu}.
Moreover, by~\Cref{cor:exit-maximizer}, $m_{\alpha_1,\alpha_2}$ is maximized at
\[
(\alpha_1,\alpha_2)
=
\left(
\frac{1+\sqrt5}{4},
\frac{1-\sqrt5}{4}
\right).
\]
\end{remark}

\begin{figure}[htbp]
    \centering
    \includegraphics[width=0.7\linewidth]{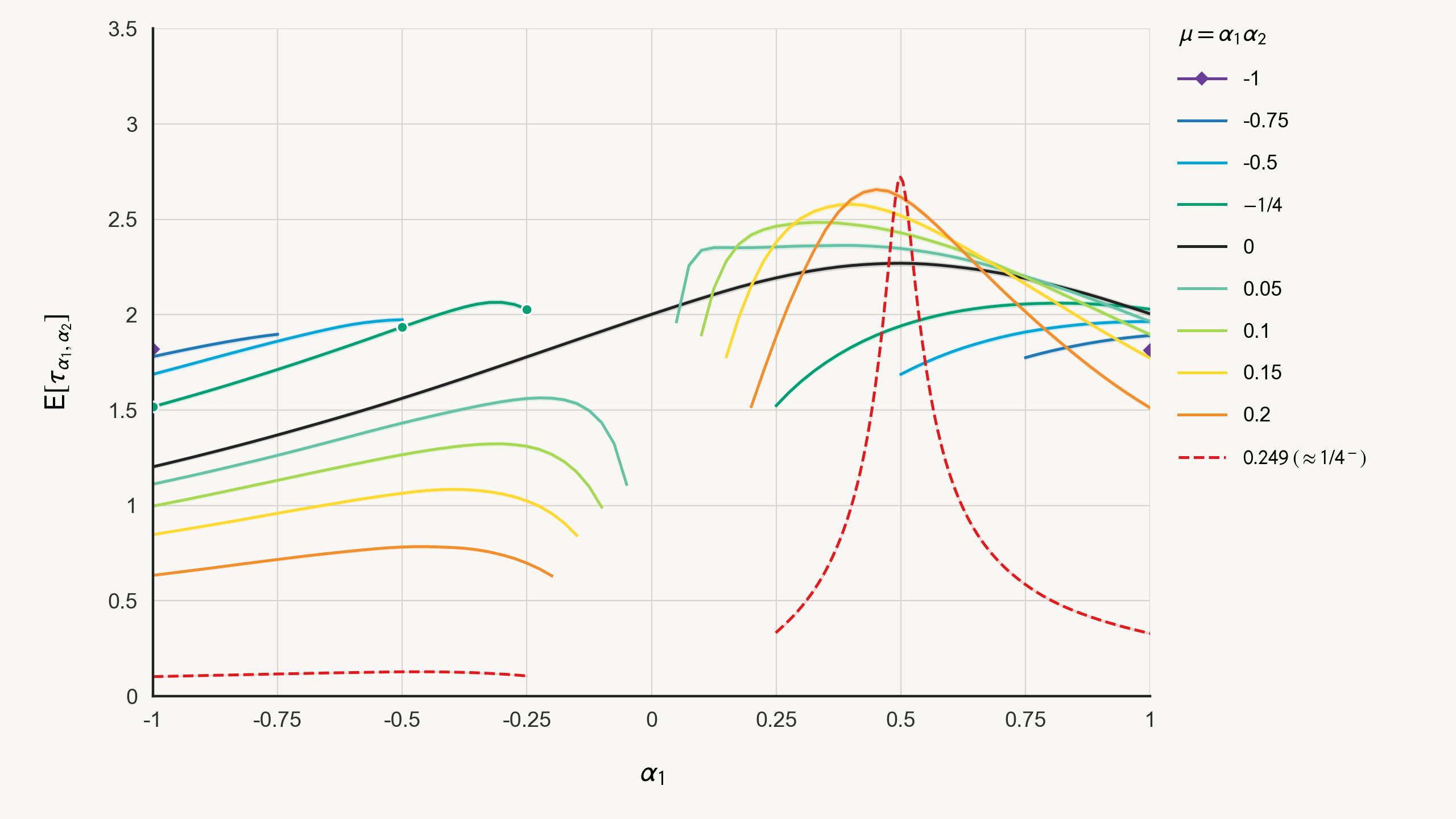}
    \caption{The expected exit time $m_{\alpha_1,\alpha_2}=\E[\tau_{\alpha_1,\alpha_2}]$
as a function of $\alpha_1$ for several fixed values of
$\mu=\alpha_1\alpha_2$.
Each coloured curve corresponds to a level set of $\mu$, with
$\alpha_2=\mu/\alpha_1$. For $\mu=0$ (we take $\alpha_2=0$, so that the second walk is a simple random walk), the graph is symmetric with respect to $\alpha_1=1/2$ on
$0\le\alpha_1\le1$, and satisfies
$m_{0,0}=m_{1,0}=2$.
The three marked points on the level set $\mu=-1/4$ correspond to the
parameter pairs considered in~\Cref{remark:exit_time_monotone_max}(ii), illustrating the
ordering of the expected exit times established there.
Finally, the dashed red curve corresponds to $\mu=0.249$, illustrating the
behaviour as $\mu\uparrow1/4$. 
$m_{\alpha_1,\alpha_2}$ remains finite at $\alpha_1=\alpha_2=1/2$ by \Cref{prop:exit-expectation}.}
    \label{fig:expected_exit_time_curves_selfsimilar_fixmu}
\end{figure}

\section*{Acknowledgments}
RA gratefully acknowledges the support of the Ongoing Research Funding Program
(ORF-2026-987), King Saud University, Riyadh, Saudi Arabia. SS was supported by the WISE program (MEXT) at Kyushu University, and was also supported in part by JP22H05105 and JP23K25774. TS was supported by JSPS KAKENHI Grant Numbers JP22H05105 and JP23K25774, and was partially supported by JP24KK0060 and JP26K22263.



\begin{thebibliography}{99}

\bibitem[ACG17]{aletti2017synchronization}
Giacomo Aletti, Irene Crimaldi, and Andrea Ghiglietti.
\newblock Synchronization of reinforced stochastic processes with a network-based interaction.
\newblock \emph{The Annals of Applied Probability}, 27(6):3787--3844, 2017.

\bibitem[AQ26]{aguech2026two}
Rafik Aguech and Shuo Qin.
\newblock How two elephants can learn from each other.
\newblock \emph{Stochastic Processes and their Applications}, 201:105040, 2026.

\bibitem[And55]{anderson1955integral}
Theodore~W. Anderson.
\newblock The integral of a symmetric unimodal function over a symmetric convex set and some
probability inequalities.
\newblock \emph{Proceedings of the American Mathematical Society}, 6(2):170--176, 1955.

\bibitem[AZ26]{andre2026estimates}
Morgan Andr\'e and Leonel Zuazn\'abar.
\newblock Estimates on escape times for the elephant random walk.
\newblock \emph{arXiv preprint arXiv:2602.18953}, 2026.

\bibitem[BB16]{baur2016elephant}
Erich Baur and Jean Bertoin.
\newblock Elephant random walks and their connection to P{\'o}lya-type urns.
\newblock \emph{Physical Review E}, 94(5):052134, 2016.

\bibitem[Ber18]{bercu2018martingale}
Bernard Bercu.
\newblock A martingale approach for the elephant random walk.
\newblock \emph{Journal of Physics A: Mathematical and Theoretical}, 51(1):015201, 2018.

\bibitem[Ber22]{bertenghi2022functional}
Marco Bertenghi.
\newblock Functional limit theorems for the multi-dimensional elephant random walk.
\newblock \emph{Stochastic Models}, 38(1):37--50, 2022. 

\bibitem[Ber20]{bertoin2020universality}
Jean Bertoin.
\newblock Universality of noise-reinforced Brownian motions.
\newblock In \emph{In and Out of Equilibrium 3: Celebrating Vladas Sidoravicius},
pages 147--161. Springer, 2020.

\bibitem[Bil99]{billingsley1999convergence}
Patrick Billingsley.
\newblock \emph{Convergence of Probability Measures}.
\newblock 2nd ed.\ Wiley, New York, 1999.

\bibitem[BL19]{bercu2019multidimensional}
Bernard Bercu and Lucile Laulin.
\newblock On the multi-dimensional elephant random walk.
\newblock \emph{Journal of Statistical Physics}, 175(6):1146--1163, 2019.

\bibitem[BRO22]{bertenghi2022joint}
Marco Bertenghi and Alejandro Rosales-Ortiz.
\newblock Joint invariance principles for random walks with positively and
negatively reinforced steps.
\newblock \emph{Journal of Statistical Physics}, 189:35, 2022.

\bibitem[CGS17a]{coletti2017strong}
Cristian~F. Coletti, Renato Gava, and Gunter~M. Sch\"utz.
\newblock A strong invariance principle for the elephant random walk.
\newblock \emph{Journal of Statistical Mechanics: Theory and Experiment}, 2017(12):123207, 2017.

\bibitem[CGS17b]{coletti2017central}
Cristian~F. Coletti, Renato Gava, and Gunter~M. Sch\"utz.
\newblock Central limit theorem and related results for the elephant random walk.
\newblock \emph{Journal of Mathematical Physics}, 58(5):053303, 2017.

\bibitem[Che14]{chen2014two}
Jun Chen.
\newblock Two particles' repelling random walks on the complete graph.
\newblock \emph{Electronic Journal of Probability}, 19(113):1--17, 2014.

\bibitem[Das24]{das2024elephant}
Deborshi Das.
\newblock Elephant random walks with graph based shared memory: First and second order asymptotics.
\newblock \emph{arXiv preprint arXiv:2410.22969}, 2024.

\bibitem[EK86]{ethier1986markov}
Stewart~N. Ethier and Thomas~G. Kurtz.
\newblock Markov Processes: Characterization and Convergence.
\newblock Wiley, New York, 1986.

\bibitem[ER24]{erhard2024stochastic}
Dirk Erhard and Guilherme Reis.
\newblock Stochastic processes with competing reinforcements.
\newblock \emph{The Annals of Applied Probability}, 34(5):4513--4553, 2024.

\bibitem[GLRS25]{guerin2025limit}
H\'el\`ene Gu\'erin, Lucile Laulin, Kilian Raschel, and Thomas Simon.
\newblock On the limit law of the superdiffusive elephant random walk.
\newblock \emph{Electronic Journal of Probability}, 30(102):1--25, 2025.

\bibitem[GLR26]{guerin2026fixed}
H\'el\`ene Gu\'erin, Lucile Laulin, and Kilian Raschel.
\newblock A fixed-point equation approach for the superdiffusive elephant random walk.
\newblock \emph{Annales de l'Institut Henri Poincar\'e, Probabilit\'es et Statistiques},
62(2):973--1005, 2026.

\bibitem[GMR24]{gantert2024interacting}
Nina Gantert, Fabian Michel, and Guilherme H. de Paula Reis.
\newblock Interacting edge-reinforced random walks.
\newblock \emph{ALEA, Latin American Journal of Probability and Mathematical Statistics},
21(2):1041--1072, 2024.

\bibitem[HH80]{hall1980martingale}
Peter Hall and C.~C. Heyde.
\newblock \emph{Martingale Limit Theory and Its Application}.
\newblock Academic Press, New York, 1980.

\bibitem[JS03]{jacod2003limit}
Jean Jacod and Albert~N. Shiryaev.
\newblock \emph{Limit Theorems for Stochastic Processes}.
\newblock 2nd ed.\ Springer, Berlin, 2003.

\bibitem[KT19]{kubota2019gaussian}
Naoki Kubota and Masato Takei.
\newblock Gaussian fluctuation for superdiffusive elephant random walks.
\newblock \emph{Journal of Statistical Physics}, 177(6):1157--1171, 2019.

\bibitem[Mar19]{marquioni2019multidimensional}
Vitor M. Marquioni.
\newblock Multidimensional elephant random walk with coupled memory.
\newblock \emph{Physical Review E}, 100(5):052131, 2019.

\bibitem[Qin26]{qin2026cover}
Shuo Qin.
\newblock Cover times and ranges of elephant random walks.
\newblock \emph{arXiv preprint arXiv:2609.17264}, 2026.

\bibitem[PCR23]{prado2023two}
Fernando P. A. Prado, Cristian F. Coletti, and Rafael A. Rosales.
\newblock Two repelling random walks on $\mathbb{Z}$.
\newblock \emph{Stochastic Processes and their Applications}, 160:72--88, 2023.

\bibitem[PR25]{prado2025interacting}
Fernando P. A. Prado and Rafael A. Rosales.
\newblock Interacting vertex reinforced random walks on complete sub-graphs.
\newblock \emph{arXiv preprint arXiv:2508.15992}, 2025.

\bibitem[Pro05]{philip2005stochastic}
Philip~E. Protter.
\newblock \emph{Stochastic Integration and Differential Equations}.
\newblock 2nd ed., version 2.1, Springer, Berlin, 2005.

\bibitem[RM26]{ramirez2026frog}
J.~H. Ram\'irez-Gonz\'alez and F\'abio Prates Machado.
\newblock The frog model on $\mathbb{Z}$ with random discrete Weibull lifetimes and elephant random walks.
\newblock \emph{ResearchGate preprint}, 2026.
\newblock doi:10.13140/RG.2.2.28211.18724.

\bibitem[RPP22]{rosales2022vertex}
Rafael A. Rosales, Fernando P. A. Prado, and Benito Pires.
\newblock Vertex reinforced random walks with exponential interaction on complete graphs.
\newblock \emph{Stochastic Processes and their Applications}, 148:353--379, 2022.

\bibitem[RTT24]{roy2024often}
Rahul Roy, Masato Takei, and Hideki Tanemura.
\newblock How often can two independent elephant random walks on $\Z$ meet?
\newblock \emph{Proceedings of the Japan Academy, Series A, Mathematical Sciences}, 100(10):57--59, 2024.

\bibitem[Shi25]{shibata2025functional}
Shuhei Shibata.
\newblock Functional limit theorems for elephant random walks on general periodic structures.
\newblock \emph{arXiv preprint arXiv:2511.10347}, 2025.

\bibitem[SS25]{shibata2025remark}
Shuhei Shibata and Tomoyuki Shirai.
\newblock A remark on elephant random walks via the classical law of the iterated logarithm for
self-similar Gaussian processes.
\newblock \emph{Illinois Journal of Mathematics}, 69(3):567--582, 2025.

\bibitem[ST04]{schutz2004elephants}
Gunter~M. Sch\"utz and Steffen Trimper.
\newblock Elephants can always remember: Exact long-range memory effects in a non-Markovian random walk.
\newblock \emph{Physical Review E}, 70(4):045101(R), 2004.

\bibitem[VW23]{wellner2023weak}
A.~W.\ van der Vaart and Jon~A. Wellner.
\newblock \emph{Weak Convergence and Empirical Processes: With Applications to Statistics}.
\newblock 2nd ed., Springer, Cham, 2023.

\bibitem[Whi07]{whitt2007proofs}
Ward Whitt.
\newblock Proofs of the martingale FCLT.
\newblock \emph{Probability Surveys}, 4:268--302, 2007.

\end{thebibliography}
\end{document}